\numberwithin{equation}{section}
\theoremstyle{plain}
\newtheorem{theorem}{Theorem}[section]
\newtheorem{lemma}[theorem]{Lemma}
\newtheorem{proposition}[theorem]{Proposition}
\newtheorem{question}{Question}
\newtheorem{conjecture}{Conjecture}
\theoremstyle{definition}
\newtheorem{definition}[theorem]{Definition}
\newtheorem{example}[theorem]{Example}
\theoremstyle{remark}
\newtheorem*{remark}{Remark}
\title{Classification of exchange relation planar algebras through sieving forest fusion graphs}
\author{Fan Lu \and Zhengwei Liu}
\date{}
\begin{document}

    \maketitle

    \begin{abstract}
        We suggest a classification scheme for subfactorizable fusion bialgebras, particularly for exchange relation planar algebras. This scheme begins by transforming infinite diagrammatic consistency equations of exchange relations into a finite set of algebraic equations of degree at most 3. We then introduce a key concept, the fusion graph of a fusion bialgebra, and prove that the fusion graph for any minimal projection is a forest if and only if the planar algebra has an exchange relation. For each fusion graph, the system of degree 3 equations reduces to linear and quadratic equations that are efficiently solvable. To deal with exponentially many fusion graphs, we propose two novel analytic criteria to sieve most candidates from being subfactor planar algebras.  Based on these results, we classify 5-dimensional subfactorizable fusion bialgebras with exchange relations. This scheme recovers the previous classification up to 4-dimension by Bisch, Jones, and the second author with quick proof. We developed a computer program to sieve forest fusion graphs using our criteria without solving the equations. The efficiency is 100\%, as all remaining graphs are realized by exchange relation planar algebras up to 5-dimension. 
    \end{abstract}

    \section{Introduction}

    Bisch and Jones initiated the classification of subfactor planar algebras of small dimensions generated by a non-trivial 2-box, namely a 4-valent vertex. The small dimension condition of 3-boxes ensures sufficiently nice skein relations to evaluate 4-valent planar algebras, so that the planar algebras can be classified by solving the consistency equations of these parameterized relations.  
    In particular, the Bisch-Jones classification of planar algebras up to 13 dimensional 3-boxes in \cite{Bisch2000Singly,Bisch2003Singly} is equivalent to the classification of singly generated exchange relation planar algebras. 
    The Yang-Baxter relation is a more comprehensive skein relation for 2-boxes, and the classification of singly generated Yang-Baxter relation planar algebras was achieved in \cite{Liu2016Yang}, including three distinct families:  BMW planar algebras \cite{Birman1989Braids,Murakami1987Kauffman,Wenzl1990Quantum}, Bisch-Jones planar algebras \cite{Bisch1997Algebras}, and another one-parameter family of planar algebras \cite{Liu2016Yang}. 

    Exchange relation planar algebras with two generators were classified in \cite{Liu2016Exchange}. Together with the two Temperley-Lieb elements of non-intersecting pairs of strings, the 2-box space is 4 dimensional and the 3-box space is at most 25 dimensional.
    It was observed that the exchange relation is determined by the structure of the 2-boxes, which includes trace, multiplication, convolution, adjoint, and contragredient, as illustrated in Figure \ref{figureintro}.

    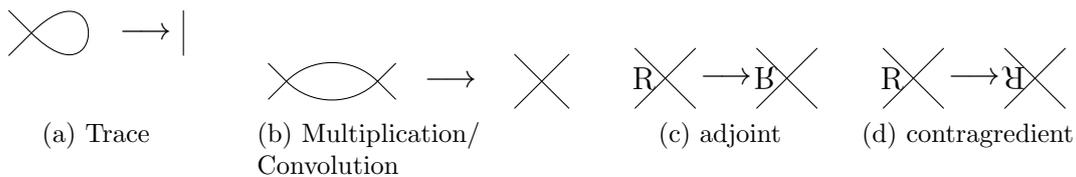
\begin{figure}[ht]
        \centering
        \subfloat[][Trace]{
            \begin{tikzpicture}
                \draw (0,0)  .. controls (1,1) and (1,-1) ..  (0,0);
                \draw[] (0,0) -- (-0.3,0.3);
                \draw[] (0,0) -- (-0.3,-0.3);
                \node[] (A) at (1.5,0) {$ \longrightarrow $};
                \draw[] (2,-0.3) -- (2,0.3);
            \end{tikzpicture}
        }
        \quad \quad
        \subfloat[][Multiplication/ \\ Convolution]{
            \begin{tikzpicture}[scale=1.2]
                \draw (0,0) -- (-0.2,0.2);
                \draw (0,0) -- (-0.2,-0.2);
                \draw (0,0) to[bend left=45] (1,0)--(1.2,0.2) ;
                \draw (0,0) to[bend right=45] (1,0)--(1.2,-0.2);
                \node[] (A) at (1.8,0) {$ \longrightarrow $};
                \draw[] (2.5,-0.3) -- (3.1,0.3);
                \draw[] (2.5,0.3) -- (3.1,-0.3);
            \end{tikzpicture}
        }
        \quad
        \subfloat[][adjoint]{
            \begin{tikzpicture}[scale=0.8]
                \draw (0,0) -- (1,1);
                \draw (1,0) -- (0,1);
                \node[] (l) at (0.15,0.5) {R};
                \node[] (A) at (1.5,0.5) {$ \longrightarrow $};
                \draw (2,0) -- (3,1);
                \draw (3,0) -- (2,1);
                \node[] (r) at (2.15,0.5) [yscale=-1]{R};
            \end{tikzpicture}
        }
        \quad
        \subfloat[][contragredient]{
            \begin{tikzpicture}[scale=0.8]
                \draw (0,0) -- (1,1);
                \draw (1,0) -- (0,1);
                \node[] (l) at (0.15,0.5) {R};
                \node[] (A) at (1.5,0.5) {$ \longrightarrow $};
                \draw (2,0) -- (3,1);
                \draw (3,0) -- (2,1);
                \node[] (r) at (2.15,0.5) {\rotatebox{180}{R}};
            \end{tikzpicture}
        }
        \caption{The structure of 2-boxes.}
        \label{figureintro}
    \end{figure}
    
    This structure of the abelian 2-box algebra has been axiomatized as a fusion bialgebra in \cite{Liu2021Fusion}, which was also motivated by the unitary categorification problem of fusion rings.

      \begin{definition}
        Let $ \mathcal{B} $ be a unital $*$-algebra over the complex field $ \mathbb{C} $. We define $ \mathcal{B} $ as a fusion bialgebra if $ \mathcal{B} $ has a $ \mathbb{R}_{\ge 0} $-basis $ B=\{x_{0}=1_{\mathcal{B}}, x_{1}, \cdots, x_{n}\} $ that satisfies the following conditions:
        \begin{enumerate}
            \item[(1)] ${x_0, . . . , x_n}$ is a linear basis over $ \mathbb{C} $;
            \item[(2)] $ x_jx_k=\sum\limits_{s=0}^{n} \widetilde{N}_{j,k}^{s}x_s $, $ \widetilde{N}_{j,k}^{s}\ge 0 $;
            \item[(3)]  there exists an involution $ * $ on $\{0, 1, 2, . . . , n\}$ such that $ x_{k}^{*}=x_{k^{*}} $ and $ \widetilde{N}_{j,k}^{0}=\delta_{j,k^{*}} $.
        \end{enumerate}
    \end{definition}

    A fusion bialgebra is called subfactorizable, if it can be realized as the 2-box space of an irreducible subfactor planar algebra. The classification of  2-dimensional subfactorizable fusion bialgebras is equivalent to the classification of Jones index \cite{Jones1983Index}. 
    For every $j$, we define the \textbf{weighted fusion graph} $\Gamma_j$ whose adjacency matrix $ \widetilde{N}_{j}^{} $ is defined by $ (\widetilde{N}_{j})_{s,k}:=\widetilde{N}_{j,k}^{s} $ (Def. \ref{def:cg}).
    A subfactorizable fusion bialgebra has an exchange relation if its $ \mathbb{R}_{\ge 0} $-basis serves as generators of the subfactor planar algebra and satisfies the following exchange relation (Def. \ref{Def: Exchange Relation}):
        \begin{align*}
            \begin{tikzpicture}
                \draw[] (0,0) -- (0.3,0.5);
                \draw[] (0.3,0) -- (0.6,0.5);
                \draw[] (0,0.5) -- (0.6,0);
                \node[] (A) at (1,0.25) {$\longrightarrow$};
            \end{tikzpicture}
            \begin{tikzpicture}
                \draw (0,0.5) .. controls (0,0.25) and (0.6,0.25) .. (0.6,0.5);
                \draw (0,0) .. controls (0,0.25) and (0.6,0.25) .. (0.6,0);
                \draw[] (0.3,0) -- (0.3,0.5);
                \node[] (A) at (1,0.25) {$+$};
            \end{tikzpicture} 
            \begin{tikzpicture}
                \draw[] (0,0.5) -- (0.3,0);
                \draw[] (0.3,0.5) -- (0.6,0);
                \draw[] (0,0) -- (0.6,0.5);
            \end{tikzpicture}.
        \end{align*}

    In this paper, we focus on the classification of subfactorizable fusion bialgebras with exchange relations, based on the following key result.

    \begin{theorem}
    A subfactorizable fusion bialgebras has an exchange relation if and only if every fusion graph $\Gamma_j$ is a forest.
    \end{theorem}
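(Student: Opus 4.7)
The plan is to pass both conditions through the same algebraic bottleneck, namely the polynomial equations in the structure constants $\widetilde{N}_{j,k}^s$ that extract the content of a local 3-string identity. The exchange relation is a diagrammatic relation among three strings; labeling its strands by basis vectors and closing the resulting tangle evaluates the relation to scalar equations on the $\widetilde{N}$, each of degree at most three (in line with the paper's degree-three reduction announced in the abstract). The forest property of $\Gamma_j$ is itself a purely algebraic statement on the adjacency matrix $\widetilde{N}_j$: no multi-edges and no cycles, equivalently a rigidity condition on the entries of $\widetilde{N}_j^2$.

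For the forward direction, I would apply the exchange relation with the bridging strand labeled by $x_j$ and the parallel strands paired against arbitrary $x_k$ and $x_l$. Tracing yields identities of the schematic form
\[
\sum_{s} \widetilde{N}_{j,k}^{s}\,\widetilde{N}_{j,s}^{l} \;=\; F_{k,l}(\widetilde{N}),
\]
where $F_{k,l}$ records the contributions of the cap-cup-vertical term and the twisted-crossing term on the right-hand side of the relation. The left-hand side counts length-two walks from $k$ to $l$ in $\Gamma_j$, whereas $F_{k,l}$ has no dependence on the cyclic structure of $\Gamma_j$. A cycle in $\Gamma_j$ would generate extra length-two walks beyond what $F_{k,l}$ allows, and since all $\widetilde{N}_{j,k}^s$ are nonnegative, no cancellation is possible. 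Multi-edges are excluded by the same quadratic identity specialized to $k=l$, which bounds $\sum_s (\widetilde{N}_{j,k}^s)^2$ against a linear expression in $\widetilde{N}$.

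For the backward direction, I would invert this computation. Assuming every $\Gamma_j$ is a forest, the entries of $\widetilde{N}_j^2$ are rigidly determined by degrees and second-neighbor fusion data along each tree branch; this is precisely what is needed to realize the identities above. The task is then to lift these scalar identities back to a diagrammatic equality among 3-string tangles in the planar algebra. I expect this lifting to be the main obstacle, since a polynomial identity among traces does not, a priori, force a local tangle identity. Here subfactorizability should supply the missing structure: the 2-box operations of trace, multiplication, convolution, adjoint, and contragredient depicted in Figure \ref{figureintro} rigidify the 3-string tangle space enough that the matched structure-constant identities already span all its relations, so no hidden planar constraint can obstruct the realization of the exchange relation.
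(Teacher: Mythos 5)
There are genuine gaps in both directions, and the proposal misses the mechanism the paper actually uses, which is linear algebra in the 3-box space $\mathcal{P}_{3,+}$ rather than polynomial identities among the fusion coefficients. For the ``only if'' direction, your proposed identity $\sum_s \widetilde{N}_{j,k}^s\widetilde{N}_{j,s}^l = F_{k,l}(\widetilde{N})$ cannot do the job for two reasons. First, the quadratic identities one actually gets from closing tangles are the associativity equations (Equation \ref{associativity}), which hold for \emph{every} fusion bialgebra and therefore carry no information about the exchange relation; you never exhibit a specific $F_{k,l}$ that is violated by a cycle. Second, even granting such an identity, entries of $\widetilde{N}_j^2$ count length-two walks and can at best detect $4$-cycles (a $2\times 2$ all-nonzero submatrix); they are blind to $6$-cycles and longer, so positivity of length-two walk counts cannot exclude general cycles. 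The paper's obstruction is instead \emph{linear}: writing $E_1(i,j)$ in the orthogonal basis $\{T(s,\ell,k)\}$ of $\mathcal{P}_{3,+}$ forces $a_{sk}^{ij}+b_{\ell k}^{ij}=\delta_{si}\delta_{\ell j}$ for every edge $(s,\ell)$ of $\Gamma_k$ (Equation \ref{labeq}); propagating these constraints around a cycle through the edge $(i,j)$ alternately assigns $a$ and $-a$ to the vertices and returns $a+(-a)=1$, a contradiction. Also note the fusion graph as defined has no multi-edges (an edge is present iff $N_{kj}^i\neq 0$), so that part of your argument addresses a non-issue.

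For the ``if'' direction you explicitly flag the lifting from scalar identities to a tangle identity as ``the main obstacle'' and then assert that subfactorizability ``should supply the missing structure''; this is precisely the step that needs a proof, and the paper supplies a concrete one that your outline does not reach. The point is that $E_2(s,k)=\sum_{\ell:\,N_{k\ell}^s\neq 0}T(s,\ell,k)$ and $E_3(\ell,k)=\sum_{s:\,N_{k\ell}^s\neq 0}T(s,\ell,k)$ are the ``vertex star'' sums of $T$-vectors in $\Gamma_k$, so asking whether $T(i,j,k)$ (a single edge) lies in their span is a question about the edge--vertex incidence structure of $\Gamma_k$. When $\Gamma_k$ is a forest, deleting the edge $(i,j)$ splits its tree into components $C_1\sqcup C_2$, and counting edges of $C_1$ in two ways yields the explicit alternating expression of Lemma \ref{thm:acyclic}; summing $T(i,j,k)$ over $k$ then produces the exchange relation for $E_1(i,j)$. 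No appeal to ``rigidity of the 3-string tangle space'' beyond Landau's spanning result (Proposition \ref{OBP3}) is needed, but that explicit tree-cutting computation is the content of the theorem and is absent from your proposal.
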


    This result establishes a surprising correspondence between exchange relations in skein theory and forests in graph theory, enabling us to propose the following more delicate classification scheme for subfactorizable fusion bialgebras with exchange relations.

    \subsection*{The classification scheme} 

    \begin{enumerate}[label={\textbf{Step} $\mathbf{\arabic*}$.},leftmargin=9ex]
    \item List complete Consistency Equations.  (Theorem \ref{thm:evathm}, \ref{thm:conthm}, \ref{thm:polyeq}).
    \item Simplify Consistency Equations for every forest fusion graph. (Theorem \ref{thm:parameter}).
    \item \textbf{(FF)} Enumerate all Forest Fusion graphs of a fusion bialgebra.
    \item \textbf{(RG)} Choose representative fusion graphs up to graph isomorphism.
    \item \textbf{(APC)} Sieve graphs that do not satisfy the Associative Positivity Criterion.
    \item \textbf{(FTPC)} Sieve graphs from the Free/Tensor Product of two planar algebras.
    \item Solve structure equations for each remaining fusion graph. If a solution exists, verify the unitarity condition for it to be a subfactor planar algebra.
    \end{enumerate}

    Step 1 and 2 involve theoretical results that we establish in this paper. Step 4 to 6 are sieving criteria that we propose. Step 3 to 7 are implemented by our automated computer program \textbf{FuBi}. The input of \textbf{FuBi} is an integer $ n $, and the output is the classification result of $ n $-dimensional subfactorizable fusion bialgebras with exchange relations.

    In step 1, we develop an evaluation algorithm and present complete consistency conditions for exchange relation planar algebras, and reduce them to a finite set of algebraic equations concerning parameters in exchange relations, termed structure equations. This process resembles finding a finite generator set for an abstractly defined polynomial ideal. The highest degree of structure equations is 3, with the number of structure equations scaling as $ O(n^6) $ and the number of variables as $ O(n^4) $, see Theorem \ref{thm:polyeq}. The computational complexity for computing the Gr\"obner basis of the structure equations grows double exponentially, which is a common challenge in classification problems. 

    In step 2, the parameters in exchange relations are determined to be $ 0 $ or $ \pm1 $, depending on the forest fusion graph. Consequently, the fusion coefficient $ \widetilde{N}_{j,k}^{s} $ is an alternating sum of the quantum dimensions of 2-box generators. Thus the structure equations are significantly simplified by the forest fusion graph. The highest degree of structure equations is reduced from 3 to 2, and most equations are linear equations. The number of variables is reduced from $ O(n^{4}) $ to $ n $. The essential variables are quantum dimensions of 2-box generators, which fully determine the exchange relations. The system of structure equations is decomposed into exponentially many simpler systems corresponding to forest fusion graphs. From the perspective of algebraic geometry, the forest structure is rigid enough to decompose a complicated algebraic variety into simpler subvarieties, so that each subvariety corresponds to an exchange relation planar algebra.

    In step 3, the fusion graph depends on the non-zero elements of fusion coefficients. We encode fusion coefficients into a binary string, with 1 representing a non-zero element, and traverse over all possible binary strings. The forest structure of fusion graphs can be detected easily according to binary strings, allowing us to enumerate all forest fusion graphs.

    In step 4, the permutation of 2-box generators leads to an isomorphism between fusion graphs. All possible permutations of 2-box generators form a group $ G $, which acts naturally on fusion coefficients and thus on binary strings. We select one representative forest fusion graph from each $ G $-orbit. 

    In step 5 and 6, we propose two novel analytic criteria for subfactorizable fusion bialgebras with exchange relations, the \textbf{associative positivity criterion (APC)} and the \textbf{free/tensor product criterion (FTPC)}, which are obstructions to solutions of the structure equations. 
    The APC follows from the sparsity of edges in a forest fusion graph and the positivity of fusion coefficients in the associativity equations. The FTPC follows from the fact that the set of exchange relation planar algebras is closed under free product construction, and the fact that the tensor product of an exchange relation planar algebra and a depth-2 subfactor planar algebra is also an exchange relation planar algebra.

    In step 7, we solve structure equations for each remaining fusion graph by computing its Gr\"obner basis.  If a solution exists, we have to check the unitary condition for subfactor planar algebras case by case.

    Following the classification scheme, we successfully classify subfactorizable fusion bialgebras with exchange relations for $ \dim \mathcal{P}_{2,\pm}=5 $, which is considered challenging in the previous method. Additionally, we provide a succinct proof of earlier classification results for $ \dim \mathcal{P}_{2,\pm}\leq 4$ in Section \ref{prevwork}. The sieving criteria is robust with $ 100\% $ efficiency for the classification up to 5-dimension, as the last step yields a solution for every remaining fusion graph.

    We illustrate the sieving process in Table \ref{tableintro} and state the classification result for the 5-dimensional case as follows. The number of fusion graphs passing each criterion and the associated time costs are detailed, which indicates that this classification scheme is highly efficient and practical. The unique fusion graph passing all criteria is 1-regular, indicating its origin from the group $ \mathbb{Z}_5 $. Luckily, this result is achieved through purely combinatorial approaches without the need to solve any equations directly.

    \begin{table}[htbp]
        \centering
        \begin{tabular}{crrr}
        \toprule
        Criteria & \# Graphs(case 1)  & \# Graphs(case 2) & \# Graphs(case 3) \\
        \midrule
         & $ 1048576 $ & $ 65536 $ & $ 1024 $ \\
        \textbf{FF} & $ 47381 $ & $ 4435 $ & $ 137 $ \\
        \textbf{RG} & $ 2137 $ & $ 1292 $ & $ 46 $ \\
        \textbf{APC} & $ 41 $ & $ 18 $ & $ 2 $ \\
        \textbf{FTPC} & $ 0 $ & $ 0 $ & $ 1 $ \\
        \hline
        Time(seconds) & 9.443 & 1.264 & 0.162 \\
        \bottomrule
        \end{tabular}
        \caption{Sieving process for $ \dim \mathcal{P}_{2,\pm} = 5 $.}
        \label{tableintro}
    \end{table}
    
 \begin{theorem}\label{fdthm2}
        Suppose $ \mathcal{B} $ is a 5-dimensional subfactorizable fusion bialgebra with exchange relations. Then $ \mathcal{B} $ arises from one of the following planar algebras:
        \begin{enumerate}
            \item[(1)] $ \mathcal{P}^{\mathbb{Z}_5} $;
            \item[(2)] $ \mathcal{P}*\mathcal{S} $ for two exchange relation planar algebras $ \mathcal{P} $ and $ \mathcal{S} $ with $ 2 \le \dim \mathcal{P}_{2,\pm} \le 4 $ and $ \dim \mathcal{P}_{2,\pm}+\dim \mathcal{S}_{2,\pm}=6 $.
        \end{enumerate}
    \end{theorem}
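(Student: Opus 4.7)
The plan is to follow the classification scheme outlined in Steps 3--7 of the introduction, instantiated at $n=5$, and then interpret the output together with the free/tensor-product family discarded by Step \textbf{(FTPC)}. By the key correspondence (every fusion graph $\Gamma_j$ is a forest iff the 2-box algebra satisfies an exchange relation), any candidate $\mathcal{B}$ is captured by a tuple of forest fusion graphs, and the simplified structure equations of Theorem \ref{thm:parameter} let us read the coefficient data off the graph directly.

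First I would split into three cases according to the conjugation structure of the basis $B=\{x_0,x_1,x_2,x_3,x_4\}$, determined by the involution $*$ on $\{0,1,2,3,4\}$ with $0$ fixed: (i) no self-adjoint non-trivial generator, i.e.\ two conjugate pairs; (ii) exactly two self-adjoint generators and one conjugate pair; (iii) all four non-trivial generators self-adjoint. In each case the symmetries $\widetilde{N}^s_{j,k}=\widetilde{N}^{s^*}_{k^*,j^*}$ and the unit relations cut the collection of potentially non-zero fusion coefficients down to $20$, $16$ and $10$ independent entries, respectively, which we encode as binary strings giving the raw counts $2^{20},2^{16},2^{10}$ in Table \ref{tableintro}.

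Next I would apply the four combinatorial filters in sequence, which is the computational heart of the argument and is executed by \textbf{FuBi}. Step \textbf{(FF)} keeps only binary strings such that every fusion graph $\Gamma_j$ is a forest, which by the key theorem is forced by the exchange relation. Step \textbf{(RG)} quotients by the action of the relabelling group $G$ on the generators, keeping one representative per orbit. Step \textbf{(APC)} eliminates any graph for which the associativity equations, together with the positivity of fusion coefficients and the sparsity forced by the forest structure, imply the vanishing of some entry that the encoded string declared non-zero. Step \textbf{(FTPC)} removes any graph already realised by a free or tensor product of lower-dimensional exchange relation planar algebras, separating out the family in item (2). The outcome is a unique surviving orbit in case (iii), and none in (i)--(ii). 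One checks that this surviving graph is $1$-regular on $\{1,2,3,4\}$, and Theorem \ref{thm:parameter} reduces its structure equations to a linear/quadratic system in the quantum dimensions of the four generators whose unique unitary solution is the group-like datum of $\mathbb{Z}_5$; this gives item (1).

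Finally, I would reinstate the FTPC-sieved candidates: by the closure of exchange relation planar algebras under free product, every $\mathcal{P}*\mathcal{S}$ with $2\le \dim\mathcal{P}_{2,\pm}\le 4$ and $\dim\mathcal{P}_{2,\pm}+\dim\mathcal{S}_{2,\pm}=6$ is itself a 5-dimensional subfactorizable fusion bialgebra with an exchange relation, and the inductive hypothesis (the classification up to 4 dimensions recalled in Section \ref{prevwork}) shows that the factors can only be exchange relation planar algebras of the listed dimensions, producing item (2) exactly. The hard part is Steps \textbf{(FF)}--\textbf{(APC)}: the raw search space of up to $2^{20}$ binary strings makes any by-hand treatment infeasible, so the argument relies on the algorithmic implementation, and the most error-prone ingredients are verifying that the APC inequalities are exhaustively applied to each orbit representative and that the FTPC catalogue covers every lower-dimensional free/tensor decomposition.
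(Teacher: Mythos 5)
Your proposal follows the paper's own scheme essentially verbatim: split by the involution type on the four non-trivial generators, encode the Frobenius-reduced fusion coefficients as binary strings, run the \textbf{FF}, \textbf{RG}, \textbf{APC}, \textbf{FTPC} sieves, identify the unique survivor as $\mathbb{Z}_5$ from $1$-regularity, and account for the sieved-out candidates via closure of exchange relations under free products. The overall architecture and the final statement are correct.

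There is, however, a concrete error in your case bookkeeping that you should fix. You attach the counts $20$, $16$, $10$ to the cases ``two conjugate pairs'', ``one conjugate pair'', ``all self-adjoint'' in that order, but the correct correspondence is the reverse: the all-self-dual case has $20$ independent classes (so $2^{20}$ strings), the one-pair case has $16$, and the two-pair case has $10$. The reason is that Frobenius reciprocity merges \emph{more} coefficients into each equivalence class when generators come in dual pairs (compare the paper's $t_1=\{N_{1,1}^1\}$ in the all-self-dual case with the six-element class $t_1$ in the two-pair case), so more duality means \emph{fewer} independent entries, not more. The same reversal propagates to your conclusion: you place the unique surviving graph in the all-self-adjoint case and then identify it with $\mathbb{Z}_5$, which is internally inconsistent --- a $1$-regular forest fusion graph with all generators self-dual would have to come from a group of order $5$ in which every element is an involution, and no such group exists. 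The survivor lives in the two-conjugate-pair case, matching the fact that $\mathbb{Z}_5$ has exactly two pairs of mutually inverse non-identity elements. You also omit the commutativity of convolution in the two-pair case (the paper proves this via associativity and Frobenius reciprocity before setting up the equivalence classes), which is needed to justify the class counts; it is worth at least recording that this must be established.
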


    \section{Preliminaries}

    \subsection{Subfactor planar algebra and exchange relation}\label{expadef}
        We refer the reader to \cite{Jones1999Planar,Jones2012Quadratic} for origins of subfactor planar algebras and \cite{Landau2002Exchange,Liu2016Exchange} for details of exchange relation planar algebras. We use the left side of an asymmetric letter instead of $ \star $ to indicate a distinguished interval that is always unshaded. The alternative shading on a planar tangle is determined by the distinguished interval. Since exchange relation planar algebras are generated by $ 2 $-boxes, the labelled planar tangle can be viewed as a labelled 4-valent planar graph by shrinking a 2-box to a crossing and labeling the element on the left side as follows. 
        \begin{align}
            \begin{tikzpicture}[inner sep=0pt,minimum size=6mm,baseline={([yshift=-0.5ex]current bounding box.center)}]
                \node (i) at (0,0) [rectangle, draw]{$ p $};
                \draw (-0.15,0.3) -- (-0.15,0.6);
                \draw (0.15,0.3) -- (0.15,0.6);
                \draw (0.15,-0.3) -- (0.15,-0.6);
                \draw (-0.15,-0.3) -- (-0.15,-0.6);
            \end{tikzpicture}
            \longrightarrow
            \begin{tikzpicture}[inner sep=0pt,minimum size=6mm,baseline={([yshift=-0.5ex]current bounding box.center)}]
                \draw (0,0) -- (1,1);
                \draw (0,1) -- (1,0);
                \node at (0.1,0.5) {$ p $};
            \end{tikzpicture}.
        \end{align} 
        We adopt both notations in the paper.

    \subsubsection{The structure of 2-boxes}

    The algebraic structure of 2-boxes consists of trace, adjoint, dual/contragredient, multiplication, and convolution. We sketch the structure of 2-boxes of a subfactor planar algebra $ \mathcal{P} $ in this subsection.

    We assume $ \mathcal{P}_{2,+} $ is abelian in the paper, so $ \mathcal{P}_{2,+} $ is a commutative $ C^{*} $-algebra. Let $ L_2 = \{p_i\}_{0\le i \le n} $ be pairwise orthogonal minimal projections that generate $ \mathcal{P}_{2,+} $. Let $ p_0 = \frac{1}{\delta} \begin{tikzpicture}[inner sep=0pt,minimum size=6mm,baseline={([yshift=-0.5ex]current bounding box.center)},scale=0.3]
        \draw (0,1) .. controls (0,0) and (1,0) .. (1,1);
        \draw (0,-1) .. controls (0,0) and (1,0) .. (1,-1);
    \end{tikzpicture} $ be the Jones projection, and $ \delta $ be the value of a circle.
    \begin{align}\label{circle}
        \begin{tikzpicture}[inner sep=0pt,minimum size=6mm,baseline={([yshift=-0.5ex]current bounding box.center)}]
            \draw (0,0) circle [radius=0.3];
        \end{tikzpicture} = \delta.
    \end{align}
    The resolution of identity is 
    \begin{align}\label{ROI}
        \begin{tikzpicture}[inner sep=0pt,minimum size=6mm,baseline={([yshift=-0.5ex]current bounding box.center)}]
            \draw (-0.15,-0.6) -- (-0.15,0.6);
            \draw (0.15,-0.6) -- (0.15,0.6);
        \end{tikzpicture} 
         = \sum\limits_{i=0}^{n}  
        \begin{tikzpicture}[inner sep=0pt,minimum size=6mm,baseline={([yshift=-0.5ex]current bounding box.center)}]
            \node (i) at (0,0) [rectangle, draw]{$ p_{i} $};
            \draw (-0.15,0.3) -- (-0.15,0.6);
            \draw (0.15,0.3) -- (0.15,0.6);
            \draw (0.15,-0.3) -- (0.15,-0.6);
            \draw (-0.15,-0.3) -- (-0.15,-0.6);
        \end{tikzpicture}.
    \end{align}
    The adjoint structure of 2-boxes is the $ * $ structure. Pictorially, it is a reflection across a horizontal line, for example, 
    \begin{align}\label{adjoint}
        \begin{tikzpicture}[inner sep=0pt,minimum size=6mm,baseline={([yshift=-0.5ex]current bounding box.center)}]
            \node (i) at (0,0) [rectangle, draw]{$ p^* $};
            \draw (-0.15,0.3) -- (-0.15,0.6);
            \draw (0.15,0.3) -- (0.15,0.6);
            \draw (0.15,-0.3) -- (0.15,-0.6);
            \draw (-0.15,-0.3) -- (-0.15,-0.6);
        \end{tikzpicture} := 
        \begin{tikzpicture}[inner sep=0pt,minimum size=6mm,baseline={([yshift=-0.5ex]current bounding box.center)}]
            \node (i) at (0,0) [rectangle, draw, yscale=-1]{$ p $};
            \draw (-0.15,0.3) -- (-0.15,0.6);
            \draw (0.15,0.3) -- (0.15,0.6);
            \draw (0.15,-0.3) -- (0.15,-0.6);
            \draw (-0.15,-0.3) -- (-0.15,-0.6);
        \end{tikzpicture}.
    \end{align}
    We define the dual of a 2-box $ p $ to be the $ 180^{\circ} $ rotation of itself, and denote it by $ \overline{p} $.
    \begin{align}\label{dual}
        \begin{tikzpicture}[inner sep=0pt,minimum size=6mm,baseline={([yshift=-0.5ex]current bounding box.center)}]
            \node (i) at (0,0) [rectangle, draw]{$ \overline{p} $};
            \draw (-0.15,0.3) -- (-0.15,0.6);
            \draw (0.15,0.3) -- (0.15,0.6);
            \draw (0.15,-0.3) -- (0.15,-0.6);
            \draw (-0.15,-0.3) -- (-0.15,-0.6);
        \end{tikzpicture} := 
        \begin{tikzpicture}[inner sep=0pt,minimum size=6mm,baseline={([yshift=-0.5ex]current bounding box.center)}]
            \node (i) at (0,0) [rectangle, draw, rotate = 180]{$ p $};
            \draw (-0.15,0.3) -- (-0.15,0.6);
            \draw (0.15,0.3) -- (0.15,0.6);
            \draw (0.15,-0.3) -- (0.15,-0.6);
            \draw (-0.15,-0.3) -- (-0.15,-0.6);
        \end{tikzpicture}.
    \end{align}
    Note that $ \overline{\overline{p}} = p $ and the dual of a minimal projection is also a minimal projection. The generators $ \{p_i\}_{0\le i \le n} $ are divided into two parts, elements that are dual of each other and self-dual elements. The dual operation extends naturally to subscript such that $ \overline{p_i} = p_{\overline{i}} $. 

    The irreducible condition, $ \dim \mathcal{P}_{1,\pm} = 1 $, means that adding a cap to a 2-box gives the scalar multiple of a strand. 
    \begin{align}
        \begin{tikzpicture}[inner sep=0pt,minimum size=6mm,baseline={([yshift=-0.5ex]current bounding box.center)}]
            \node (i) at (0,0) [rectangle, draw] {$ p_i $};
            \draw (-0.15,-0.3) -- (-0.15,-0.7);
            \draw (0.15,-0.3) -- (0.15,-0.7);
            \draw (-0.15,0.3) .. controls (-0.15,0.7) and (0.15,0.7) .. (0.15,0.3);
        \end{tikzpicture}  =c_i \ 
        \begin{tikzpicture}[inner sep=0pt,minimum size=6mm,baseline={([yshift=-0.5ex]current bounding box.center)}]
            \draw (-0.15,-0.7) -- (-0.15,0.3) .. controls (-0.15,0.7) and (0.15,0.7) .. (0.15,0.3) -- (0.15,-0.7);
        \end{tikzpicture},\quad
        \begin{tikzpicture}[inner sep=0pt,minimum size=6mm,baseline={([yshift=-0.5ex]current bounding box.center)}]
            \node (i) at (0,0) [rectangle, draw] {$ p_i $};
            \draw (-0.15,-0.3) -- (-0.15,-0.7);
            \draw (-0.15,0.3) -- (-0.15,0.7);
            \draw (0.15,0.3) .. controls (0.15,0.7) and (0.5,0.7) .. (0.5,0.3) -- (0.5,-0.3);
            \draw (0.15,-0.3) .. controls (0.15,-0.7) and (0.5,-0.7) .. (0.5,-0.3);
        \end{tikzpicture}  =d_i \ 
        \begin{tikzpicture}[inner sep=0pt,minimum size=6mm,baseline={([yshift=-0.5ex]current bounding box.center)}]
            \draw (0,-0.7) --  (0,0.7);
        \end{tikzpicture}.
    \end{align}
    Here $ d_i $ is called the quantum dimension of $ p_i $'s.

    We define multiplication in $ \mathcal{P}_{2,+} $ as vertical composition
    \begin{align}\label{multiplication}
        p_i p_j :=
        \begin{tikzpicture}[inner sep=0pt,minimum size=6mm,baseline={([yshift=-0.5ex]current bounding box.center)}]
            \node (i) at (0,0) [rectangle, draw]{$ p_i $};
            \node (j) at (0,1) [rectangle, draw] {$ p_j $};
            \draw (0.15,0.3) -- (0.15,0.7);
            \draw (0.15,-0.3) -- (0.15,-0.7);
            \draw (-0.15,0.3) -- (-0.15,0.7);
            \draw (-0.15,-0.3) -- (-0.15,-0.7);
            \draw (-0.15,1.3) -- (-0.15,1.7);
            \draw (0.15,1.3) -- (0.15,1.7);
        \end{tikzpicture}
        =\delta_{ij}
        \begin{tikzpicture}[inner sep=0pt,minimum size=6mm,baseline={([yshift=-0.5ex]current bounding box.center)}]
            \node (i) at (0,0) [rectangle, draw]{$ p_{i} $};
            \draw (-0.15,0.3) -- (-0.15,0.6);
            \draw (0.15,0.3) -- (0.15,0.6);
            \draw (0.15,-0.3) -- (0.15,-0.6);
            \draw (-0.15,-0.3) -- (-0.15,-0.6);
        \end{tikzpicture},
    \end{align}
    and convolution as horizontal composition
    \begin{align}\label{convolution}
        p_k*p_j :=
        \begin{tikzpicture}[inner sep=0pt,minimum size=6mm,baseline={([yshift=-0.5ex]current bounding box.center)}]
            \node (k) at (0,0) [rectangle, draw]{$ p_k $};
            \node (j) at (1,0) [rectangle, draw] {$ p_j $};
            \draw (0.15,0.3) .. controls (0.15,0.6) and (0.85,0.6) .. (0.85,0.3);
            \draw (0.15,-0.3) .. controls (0.15,-0.6) and (0.85,-0.6) .. (0.85,-0.3);
            \draw (-0.15,0.3) -- (-0.15,0.6);
            \draw (-0.15,-0.3) -- (-0.15,-0.6);
            \draw (1.15,0.3) -- (1.15,0.6);
            \draw (1.15,-0.3) -- (1.15,-0.6);
        \end{tikzpicture}
        = \sum_{i=0}^{n} {N}_{kj}^{i}
        \begin{tikzpicture}[inner sep=0pt,minimum size=6mm,baseline={([yshift=-0.5ex]current bounding box.center)}]
            \node (i) at (0,0) [rectangle, draw]{$ p_{i} $};
            \draw (-0.15,0.3) -- (-0.15,0.6);
            \draw (0.15,0.3) -- (0.15,0.6);
            \draw (0.15,-0.3) -- (0.15,-0.6);
            \draw (-0.15,-0.3) -- (-0.15,-0.6);
        \end{tikzpicture}. 
    \end{align}
    Thus $ c_0 = 1$ and $c_i = 0 $ for $ 1 \le i \le n $. Formula \ref{convolution} is called the fusion rule of $ \mathcal{P} $. The $ \{{N}_{kj}^{i}\}_{0\le k, j, i \le n} $ are non-negative real numbers by Schur product theorem in \cite{Liu2016Exchange} and we call them fusion coefficients. If we fix a subscript $ k $, $ {N}_{k\cdot}^{\cdot} $ is called the fusion matrix of $ p_k $.

    The inner product on $ \mathcal{P}_{n,\pm} $ is defined to be $ \langle a,b \rangle := \text{tr}_{n}(b^*a) $ where $ \text{tr}_{n}(p) = \begin{tikzpicture}[inner sep=0pt,minimum size=6mm,baseline={([yshift=-0.5ex]current bounding box.center)}]
        \node (i) at (0,0) [rectangle, draw]{$ p $};
        \draw (-0.15,0.3) -- (-0.15,0.6);
        \draw (0.15,0.3) -- (0.15,0.6);
        \draw (0.15,-0.3) -- (0.15,-0.6);
        \draw (-0.15,-0.3) -- (-0.15,-0.6);
        \draw (0.15,0.6) .. controls (0.15,0.8) and (0.55,0.8) .. (0.55,0.6);
        \draw (0.15,-0.6) .. controls (0.15,-0.8) and (0.55,-0.8) .. (0.55,-0.6);
        \draw (0.55,0.6) -- (0.55,-0.6);
        \draw (-0.15,0.6) .. controls (-0.15,1) and (0.85,1) .. (0.85,0.6);
        \draw (-0.15,-0.6) .. controls (-0.15,-1) and (0.85,-1) .. (0.85,-0.6);
        \draw (0.85,0.6) -- (0.85,-0.6);
        \node (u) at (0,0.45) {$ ... $};
        \node (d) at (0,-0.45) {$ ... $};
        \node (n) at (0.7,0) {$ n $};
    \end{tikzpicture} $ is the unnormalized Markov trace. The spherical property of subfactor planar algebras requires that the Markov trace is isotopy invariant on a sphere. 

    The string Fourier transform $ \mathfrak{F}_s: \mathcal{P}_{2,+} \to \mathcal{P}_{2,-} $ is a $ 90^{\circ} $ rotation. 
    \begin{align}
        \begin{tikzpicture}[inner sep=0pt,minimum size=6mm,baseline={([yshift=-0.5ex]current bounding box.center)}]
            \node (i) at (0,0) [rectangle, draw]{$ \mathfrak{F}_s(p) $};
            \draw (-0.15,0.3) -- (-0.15,0.6);
            \draw (0.15,0.3) -- (0.15,0.6);
            \draw (0.15,-0.3) -- (0.15,-0.6);
            \draw (-0.15,-0.3) -- (-0.15,-0.6);
        \end{tikzpicture} := 
        \begin{tikzpicture}[inner sep=0pt,minimum size=6mm,baseline={([yshift=-0.5ex]current bounding box.center)}]
            \node (i) at (0,0) [rectangle, draw, rotate = 90]{$ p $};
            \draw (-0.15,0.3) -- (-0.15,0.6);
            \draw (0.15,0.3) -- (0.15,0.6);
            \draw (0.15,-0.3) -- (0.15,-0.6);
            \draw (-0.15,-0.3) -- (-0.15,-0.6);
        \end{tikzpicture}.
    \end{align}
    It behaves like an ordinary Fourier transform. It maps the convolution in $ \mathcal{P}_{2,+} $ to the multiplication in $ \mathcal{P}_{2,-} $, i.e. $ \mathfrak{F}_s(x*y) = \mathfrak{F}_s(x)\mathfrak{F}_s(y) $. The Plancherel's formula $ \langle x,y \rangle = \langle \mathfrak{F}_s(x),\mathfrak{F}_s(y) \rangle $ holds.

    The consistency condition requires that different operations on $ 2 $-boxes should be compatible with each other, which gives restrictions on parameters.

    \begin{proposition}\label{basic_cons}
        The fusion coefficients satisfy Frobenius reciprocity 
        \begin{align}\label{frobenius}
            N_{kj}^{i}d_i = N_{j\overline{i}}^{\overline{k}}d_{\overline{k}} = N_{\overline{i}k}^{\overline{j}}d_{\overline{j}} = N_{\overline{j}\overline{k}}^{\overline{i}}d_{\overline{i}} = N_{\overline{k} i}^{j}d_j = N_{i\overline{j}}^{k}d_k.
        \end{align} 
    Moreover,
    \begin{align}
        N_{kj}^0 &= \delta_{k\overline{j}}d_k, \\
        N_{kj}^{i} &= N_{\overline{j}\overline{k}}^{\overline{i}}, \\
        d_i &= d_{\overline{i}}, \\
        \delta & = \sum_{i=0}^{n} d_i.
    \end{align}
    \end{proposition}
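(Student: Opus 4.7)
The plan is to extract every identity from diagrammatic manipulations of a single closed ``theta'' diagram built from three $2$-boxes, leveraging the spherical invariance of the Markov trace.

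First I would introduce the auxiliary scalar
\[
T(k,j,i) := \mathrm{tr}_2\bigl(p_i\,(p_k * p_j)\bigr).
\]
Using the fusion rule (\ref{convolution}) together with orthogonality $p_i p_\ell = \delta_{i\ell} p_i$, this collapses to $N_{kj}^i\,\mathrm{tr}_2(p_i)$; a cap on the right turns $p_i$ into $d_i$ times a strand, and the remaining cap contributes the circle value $\delta$ from (\ref{circle}), so $\mathrm{tr}_2(p_i) = d_i\,\delta$ and hence $T(k,j,i) = N_{kj}^i\, d_i\, \delta$. Shrinking each $2$-box to a $4$-valent vertex, the diagram for $T(k,j,i)$ becomes a closed trivalent graph on the sphere with three labeled vertices, the ``theta''. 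By sphericality the Markov trace is invariant under every isometry of the sphere permuting these three vertices; the three cyclic rotations and three edge-flips give six such isometries, each of which either cycles the labels or rotates a box by $180^\circ$, replacing $p$ by $\bar p$ via (\ref{dual}). Translating these six presentations of the same scalar $T$ back into fusion-coefficient language yields exactly the Frobenius reciprocity chain stated in the proposition.

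The remaining identities fall out as corollaries. For $d_i = d_{\bar i}$, sphericality applied to the closed one-box diagram defining the quantum dimension shows that a $180^\circ$ rotation, which sends the diagram for $d_i$ to the one for $d_{\bar i}$, preserves its value. Combining this with the Frobenius equation $N_{kj}^i d_i = N_{\bar j \bar k}^{\bar i} d_{\bar i}$ and cancelling $d_i$ gives $N_{kj}^i = N_{\bar j \bar k}^{\bar i}$. For $\delta = \sum_i d_i$, I would cap the right side of both sides of the resolution of identity (\ref{ROI}): the left side becomes a strand with a free circle attached, evaluating to $\delta$ times a strand by (\ref{circle}), while the right side becomes $\sum_i d_i$ times a strand. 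Finally, for $N_{kj}^0 = \delta_{k\bar j}\,d_k$, I would compute $p_0(p_k * p_j)$ directly: substituting $p_0 = \tfrac{1}{\delta}\cup\cap$ closes the top of the convolution, leaving $\tfrac{1}{\delta}$ times a diagram whose nontrivial middle reduces, after an isotopy, to the multiplication $p_k \bar p_j$; orthogonality of minimal projections kills this unless $k = \bar j$, in which case the remaining cap contributes a factor $d_k$, producing $\delta_{k,\bar j}\, d_k\, p_0$, and comparison with $p_0(p_k * p_j) = N_{kj}^0\, p_0$ concludes.

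The main obstacle is the first step: one must fix a convention for the distinguished (unshaded) interval and track shading carefully, so that each of the six isometries of the theta diagram dualizes (rather than adjoints) the boxes in exactly the pattern recorded by the Frobenius chain. Once this $S_3$-bookkeeping on the three legs of the theta is pinned down, the rest is routine diagrammatic computation.
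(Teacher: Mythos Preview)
Your proposal is correct and follows essentially the same route as the paper: both compute the closed theta diagram $\mathrm{tr}_2\bigl((p_k*p_j)p_i\bigr)=N_{kj}^i d_i\delta$ and exploit its spherical $S_3$-symmetry to obtain the Frobenius chain, then derive the remaining identities by routine diagrammatic manipulations (explicit form of $p_0$, rotation of $\mathrm{tr}_2(p_i)$, and capping or tracing the resolution of identity). The only cosmetic differences are that the paper obtains $N_{kj}^i=N_{\bar j\bar k}^{\bar i}$ by dualizing the convolution formula rather than by cancelling $d_i=d_{\bar i}$ from Frobenius, and it traces (rather than singly caps) the resolution of identity to get $\delta=\sum_i d_i$.
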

    \begin{proof}
        Isotopy and spherical property of a subfactor planar algebra gives 
    \begin{equation}
        \begin{aligned}
            \begin{tikzpicture}[inner sep=0pt,minimum size=6mm,baseline={([yshift=-0.5ex]current bounding box.center)}]
                \node (i) at (0,1) [rectangle, draw]{$ p_i $};
                \node (j) at (1,0) [rectangle, draw, rotate=90] {$ p_j $};
                \node (k) at (0,-1) [rectangle, draw]{$ p_k $};            
                \draw (0.15,0.7) .. controls +(down:5mm) and+(left:5mm) .. (0.7,0.15);
                \draw (0.15,-0.7) .. controls +(up:5mm) and+(left:5mm) .. (0.7,-0.15);
                \draw (1.3,0.15) .. controls +(right:5mm) and +(down:5mm) .. (1.8,0.7) -- (1.8,1.3) ;
                \draw (1.3,-0.15) .. controls +(right:5mm) and +(up:5mm) .. (1.8,-0.7) -- (1.8,-1.3) ;
                \draw (-0.15,0.7) -- (-0.15,-0.7);
                \draw (0.15,1.3) .. controls (0.15,1.8) and (1.8, 1.8) ..  (1.8,1.3);
                \draw (-0.15,1.3) ..controls (-0.15,1.8) and (-0.6, 1.8) .. (-0.6,1.3);
                \draw (0.15,-1.3) .. controls (0.15,-1.8) and (1.8,-1.8) .. (1.8,-1.3);
                \draw (-0.15,-1.3) ..controls (-0.15,-1.8) and (-0.6,-1.8) .. (-0.6,-1.3);
                \draw (-0.6,-1.3) -- (-0.6,1.3);
            \end{tikzpicture} &= \text{tr}_2((p_k*p_j)p_i) = \text{tr}_2((p_j*p_{\overline{i}})p_{\overline{k}}) = \text{tr}_2((p_{\overline{i}}*p_k)p_{\overline{j}}) \\
            &= \text{tr}_2((p_{\overline{j}}*p_{\overline{k}})p_{\overline{i}}) = \text{tr}_2((p_{\overline{k}}*p_i)p_j) = \text{tr}_2((p_i*p_{\overline{j}})p_k). \\
        \end{aligned}
    \end{equation}
    By definition, $ \text{tr}_2((p_k*p_j)p_i) = N_{kj}^{i}d_i\delta $, we obtain Frobenius reciprocity.

    It follows from above that $ \text{tr}_2((p_k*p_j)p_0) = N_{kj}^{0} $. The pictorial representation of $ p_0 $ provides another way to compute it.
    \begin{align}
        \text{tr}_2((p_k*p_j)p_0) = \frac{1}{\delta}
        \begin{tikzpicture}[inner sep=0pt,minimum size=6mm,baseline={([yshift=-0.5ex]current bounding box.center)}]
            \node (j) at (1,0) [rectangle, draw, rotate=90] {$ p_j $};
            \node (k) at (0,-1) [rectangle, draw]{$ p_k $};            
            \draw (0.15,0.7) .. controls +(down:5mm) and+(left:5mm) .. (0.7,0.15);
            \draw (0.15,0.7) .. controls (0.15,0.9) and (-0.15,0.9) .. (-0.15,0.7);
            \draw[] (0.15,1.3) .. controls (0.15,1.1) and (-0.15,1.1) .. (-0.15,1.3);
            \draw (0.15,-0.7) .. controls +(up:5mm) and+(left:5mm) .. (0.7,-0.15);
            \draw (1.3,0.15) .. controls +(right:5mm) and +(down:5mm) .. (1.8,0.7) -- (1.8,1.3) ;
            \draw (1.3,-0.15) .. controls +(right:5mm) and +(up:5mm) .. (1.8,-0.7) -- (1.8,-1.3) ;
            \draw (-0.15,0.7) -- (-0.15,-0.7);
            \draw (0.15,1.3) .. controls (0.15,1.8) and (1.8, 1.8) ..  (1.8,1.3);
            \draw (-0.15,1.3) ..controls (-0.15,1.8) and (-0.6, 1.8) .. (-0.6,1.3);
            \draw (0.15,-1.3) .. controls (0.15,-1.8) and (1.8,-1.8) .. (1.8,-1.3);
            \draw (-0.15,-1.3) ..controls (-0.15,-1.8) and (-0.6,-1.8) .. (-0.6,-1.3);
            \draw (-0.6,-1.3) -- (-0.6,1.3);
        \end{tikzpicture} = \delta_{k\overline{j}}d_k.
    \end{align}
    Thus $ N_{kj}^{0} = \delta_{k\overline{j}}d_k $.

    Take duals on both sides of Formula \ref{convolution} and compare coefficients of $ p_i $'s, we obtain $ N_{kj}^{i} = N_{\overline{j}\overline{k}}^{\overline{i}} $. 
    
    $ d_i = d_{\overline{i}} $ follows from two different ways to compute $ \text{tr}_2(p_i) $.
    \begin{align}
        \text{tr}_2(p_i) = 
        \begin{tikzpicture}[inner sep=0pt,minimum size=6mm,baseline={([yshift=-0.5ex]current bounding box.center)}]
            \node (i) at (0,0) [rectangle, draw] {$ p_i $};
            \draw (0.15,0.3) .. controls (0.15,0.7) and (0.5,0.7) .. (0.5,0.3) -- (0.5,-0.3);
            \draw (0.15,-0.3) .. controls (0.15,-0.7) and (0.5,-0.7) .. (0.5,-0.3);
            \draw (-0.15,0.3) .. controls (-0.15,0.7) and (-0.5,0.7) .. (-0.5,0.3) -- (-0.5,-0.3);
            \draw (-0.15,-0.3) .. controls (-0.15,-0.7) and (-0.5,-0.7) .. (-0.5,-0.3);
        \end{tikzpicture} = d_i\delta = d_{\overline{i}}\delta.
    \end{align}
    Take trace on both sides of Equation \ref{ROI}, and we obtain $ \delta^{2} = \sum_{i=0}^{n} d_i\delta $.
    \end{proof} 
    \begin{remark}
        These are called basic consistency equations on subfactor planar algebras.
    \end{remark}

    \subsubsection{Exchange relation}
    The exchange relation is determined by the algebraic structure of 2-boxes.

    \begin{definition}\label{Def: Exchange Relation}
        Suppose $ \mathcal{P} $ is an irreducible subfactor planar algebra. If $ \mathcal{P} $ is generated by $ \mathcal{P}_2 $ with the exchange relations 
        \begin{align}\label{exr1}
            \begin{tikzpicture}[inner sep=0pt,minimum size=6mm,baseline={([yshift=-0.5ex]current bounding box.center)}]
                \node (i) at (0,0) [rectangle, draw] {$ p_i $};
                \draw (-0.15,-0.3) -- (-0.15,-0.7);
                \draw (0.15,-0.3) -- (0.15,-0.7);
                \draw (-0.15,0.3) .. controls (-0.15,0.7) and (0.15,0.7) .. (0.15,0.3);
            \end{tikzpicture}  =c_i \ 
            \begin{tikzpicture}[inner sep=0pt,minimum size=6mm,baseline={([yshift=-0.5ex]current bounding box.center)}]
                \draw (-0.15,-0.7) -- (-0.15,0.3) .. controls (-0.15,0.7) and (0.15,0.7) .. (0.15,0.3) -- (0.15,-0.7);
            \end{tikzpicture},\quad
            \begin{tikzpicture}[inner sep=0pt,minimum size=6mm,baseline={([yshift=-0.5ex]current bounding box.center)}]
                \node (i) at (0,0) [rectangle, draw] {$ p_i $};
                \draw (-0.15,-0.3) -- (-0.15,-0.7);
                \draw (-0.15,0.3) -- (-0.15,0.7);
                \draw (0.15,0.3) .. controls (0.15,0.7) and (0.5,0.7) .. (0.5,0.3) -- (0.5,-0.3);
                \draw (0.15,-0.3) .. controls (0.15,-0.7) and (0.5,-0.7) .. (0.5,-0.3);
            \end{tikzpicture}  =d_i \ 
            \begin{tikzpicture}[inner sep=0pt,minimum size=6mm,baseline={([yshift=-0.5ex]current bounding box.center)}]
                \draw (0,-0.7) --  (0,0.7);
            \end{tikzpicture},
        \end{align}
        \begin{align}\label{exr2}
            \begin{tikzpicture}[inner sep=0pt,minimum size=6mm,baseline={([yshift=-0.5ex]current bounding box.center)}]
            \node (i) at (0,1) [rectangle, draw]{$ p_i $};
            \node (j) at (1,0) [rectangle, draw, rotate=90] {$ p_j $};            
            \draw (0.15,0.7) .. controls +(down:5mm) and+(left:5mm) .. (0.7,0.15);
            \draw (0.15,-0.7) .. controls +(up:5mm) and+(left:5mm) .. (0.7,-0.15);
            \draw (1.3,0.15) .. controls +(right:5mm) and +(down:5mm) .. (1.8,0.7) -- (1.8,1.6) ;
            \draw (1.3,-0.15) .. controls +(right:5mm) and +(up:5mm) .. (1.8,-0.7) ;
            \draw (-0.15,0.7) -- (-0.15,-0.7);
            \draw (0.15,1.3) -- (0.15,1.6);
            \draw (-0.15,1.3) -- (-0.15,1.6);
            \end{tikzpicture}
             =\sum\limits_{s,t=0}^{n} a_{st}^{ij} \ 
            \begin{tikzpicture}[inner sep=0pt,minimum size=6mm,baseline={([yshift=-0.5ex]current bounding box.center)}]
                \node (s) at (0,1) [rectangle, draw]{$ p_s $};
                \node (t) at (0,0) [rectangle, draw] {$ p_t $};
                \draw (-0.15,0.7) -- (-0.15,0.3);
                \draw (0.15,1.3) -- (0.15,1.6);
                \draw (-0.15,1.3) -- (-0.15,1.6);
                \draw (0.15,-0.3) -- (0.15,-0.7);
                \draw (-0.15,-0.3) -- (-0.15,-0.7);
                \draw (0.15,0.7) .. controls (0.3,0.5) and(0.7,0.5) .. (0.7,1.6);
                \draw (0.15,0.3) .. controls (0.3,0.5) and(0.7,0.5) .. (0.7,-0.7);
            \end{tikzpicture}
             +\sum\limits_{\ell,m=0}^{n} b_{\ell m}^{ij} \ 
            \begin{tikzpicture}[inner sep=0pt,minimum size=6mm,baseline={([yshift=-0.5ex]current bounding box.center)}]
            \node (t) at (0,-1) [rectangle, draw]{$ p_m $};
            \node (s) at (1,0) [rectangle, draw, rotate=90] {$ p_{\ell} $};            
            \draw (0.15,0.7) .. controls +(down:5mm) and+(left:5mm) .. (0.7,0.15);
            \draw (0.15,-0.7) .. controls +(up:5mm) and+(left:5mm) .. (0.7,-0.15);
            \draw (1.3,0.15) .. controls +(right:5mm) and +(down:5mm) .. (1.8,0.7)  ;
            \draw (1.3,-0.15) .. controls +(right:5mm) and +(up:5mm) .. (1.8,-0.7) -- (1.8,-1.6) ;
            \draw (-0.15,0.7) -- (-0.15,-0.7);
            \draw (0.15,-1.3) -- (0.15,-1.6);
            \draw (-0.15,-1.3) -- (-0.15,-1.6);
            \end{tikzpicture},
        \end{align}
        where $ \{p_i\}_{0\le i \le n} $ is a generator set of $ \mathcal{P}_{2,+} $ and $ c_i, d_i, a_{st}^{ij}, b_{\ell m}^{ij} \in \mathbb{C} $, then $ \mathcal{P} $ is called an exchange relation planar algebra. 
    \end{definition}

    We make the conventions for simplicity.

    \begin{align}
        T(i,j,k) := \begin{tikzpicture}[inner sep=0pt,minimum size=6mm,baseline={([yshift=-0.5ex]current bounding box.center)}]
            \node (i) at (0,1) [rectangle, draw]{$ p_i $};
            \node (j) at (1,0) [rectangle, draw, rotate=90] {$ p_j $};
            \node (k) at (0,-1) [rectangle, draw]{$ p_k $};            
            \draw (0.15,0.7) .. controls +(down:5mm) and+(left:5mm) .. (0.7,0.15);
            \draw (0.15,-0.7) .. controls +(up:5mm) and+(left:5mm) .. (0.7,-0.15);
            \draw (1.3,0.15) .. controls +(right:5mm) and +(down:5mm) .. (1.8,0.7) -- (1.8,1.6) ;
            \draw (1.3,-0.15) .. controls +(right:5mm) and +(up:5mm) .. (1.8,-0.7) -- (1.8,-1.6) ;
            \draw (-0.15,0.7) -- (-0.15,-0.7);
            \draw (0.15,1.3) -- (0.15,1.6);
            \draw (-0.15,1.3) -- (-0.15,1.6);
            \draw (0.15,-1.3) -- (0.15,-1.6);
            \draw (-0.15,-1.3) -- (-0.15,-1.6);
        \end{tikzpicture},\  & \ 
        W(i,j,k) := \begin{tikzpicture}[inner sep=0pt,minimum size=6mm,baseline={([yshift=-0.5ex]current bounding box.center)}]
            \node (i) at (0,1) [rectangle, draw, rotate=90]{$ p_i $};
            \node (j) at (-1,0) [rectangle, draw] {$ p_j $};
            \node (k) at (0,-1) [rectangle, draw, rotate=90]{$ p_k $};            
            \draw (-0.15,0.7) .. controls +(down:5mm) and+(right:5mm) .. (-0.7,0.15);
            \draw (-0.15,-0.7) .. controls +(up:5mm) and+(right:5mm) .. (-0.7,-0.15);
            \draw (0.15,0.7) -- (0.15,-0.7);
            \draw (0.15,1.3) -- (0.15,1.6);
            \draw (-0.15,1.3) -- (-0.15,1.6);
            \draw (0.15,-1.3) -- (0.15,-1.6);
            \draw (-0.15,-1.3) -- (-0.15,-1.6);
            \draw (-1.15,-1.6) -- (-1.15,-0.3);
            \draw (-1.15,1.6) -- (-1.15,0.3);
        \end{tikzpicture}, \\
        E_1(i,j) := \begin{tikzpicture}[inner sep=0pt,minimum size=6mm,baseline={([yshift=-0.5ex]current bounding box.center)}]
            \node (i) at (0,1) [rectangle, draw]{$ p_i $};
            \node (j) at (1,0) [rectangle, draw, rotate=90] {$ p_j $};
            \draw (0.15,0.7) .. controls +(down:5mm) and+(left:5mm) .. (0.7,0.15);
            \draw (0.15,-0.7) .. controls +(up:5mm) and+(left:5mm) .. (0.7,-0.15);
            \draw (1.3,0.15) .. controls +(right:5mm) and +(down:5mm) .. (1.8,0.7) -- (1.8,1.6) ;
            \draw (1.3,-0.15) .. controls +(right:5mm) and +(up:5mm) .. (1.8,-0.7) ;
            \draw (-0.15,0.7) -- (-0.15,-0.7);
            \draw (0.15,1.3) -- (0.15,1.6);
            \draw (-0.15,1.3) -- (-0.15,1.6);
        \end{tikzpicture}, \quad
        E_2(i,j) :=& \begin{tikzpicture}[inner sep=0pt,minimum size=6mm,baseline={([yshift=-0.5ex]current bounding box.center)}]
            \node (s) at (0,1) [rectangle, draw]{$ p_i $};
            \node (t) at (0,0) [rectangle, draw] {$ p_j $};
            \draw (-0.15,0.7) -- (-0.15,0.3);
            \draw (0.15,1.3) -- (0.15,1.6);
            \draw (-0.15,1.3) -- (-0.15,1.6);
            \draw (0.15,-0.3) -- (0.15,-0.7);
            \draw (-0.15,-0.3) -- (-0.15,-0.7);
            \draw (0.15,0.7) .. controls (0.3,0.5) and(0.7,0.5) .. (0.7,1.6);
            \draw (0.15,0.3) .. controls (0.3,0.5) and(0.7,0.5) .. (0.7,-0.7);
        \end{tikzpicture}, \quad
        E_3(i,j) := \begin{tikzpicture}[inner sep=0pt,minimum size=6mm,baseline={([yshift=-0.5ex]current bounding box.center)}]
            \node (t) at (0,-1) [rectangle, draw]{$ p_j $};
            \node (s) at (1,0) [rectangle, draw, rotate=90] {$ p_i $};
            \draw (0.15,0.7) .. controls +(down:5mm) and+(left:5mm) .. (0.7,0.15);
            \draw (0.15,-0.7) .. controls +(up:5mm) and+(left:5mm) .. (0.7,-0.15);
            \draw (1.3,0.15) .. controls +(right:5mm) and +(down:5mm) .. (1.8,0.7)  ;
            \draw (1.3,-0.15) .. controls +(right:5mm) and +(up:5mm) .. (1.8,-0.7) -- (1.8,-1.6) ;
            \draw (-0.15,0.7) -- (-0.15,-0.7);
            \draw (0.15,-1.3) -- (0.15,-1.6);
            \draw (-0.15,-1.3) -- (-0.15,-1.6);
        \end{tikzpicture}.
    \end{align}

    \begin{definition}
        A biprojection is a projection $ Q $ in $ \mathcal{P}_{2,+} $ satisfies $ Q*Q = cQ $ where $ c $ is a scalar.
    \end{definition}

    \begin{proposition}
        A biprojection $ Q $ satisfies exchange relation. Specifically, $ E_1(Q,Q) = E_3(Q,Q) $.
    \end{proposition}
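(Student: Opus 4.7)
The plan is to reduce both $E_1(Q,Q)$ and $E_3(Q,Q)$ to the same simpler tangle by exploiting the two defining identities of a biprojection: the idempotence $Q^2 = Q$ in $\mathcal{P}_{2,+}$ and the convolution identity $Q*Q = cQ$ from the biprojection hypothesis. The basic strategy is purely diagrammatic: in each of $E_1(Q,Q)$ and $E_3(Q,Q)$, locate a sub-tangle isotopic (via the spherical property and isotopy invariance) to $Q*Q$ or $Q\cdot Q$, then collapse it to a single $Q$ with the appropriate scalar. Finally, show the two reductions yield the same element of $\mathcal{P}_{3,\pm}$.

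Concretely, I would first unpack $E_1(Q,Q)$ as a $3$-box tangle consisting of the top box $Q$ whose bottom-right strand curves into the left face of the $90^{\circ}$-rotated $Q$, while the bottom-right strand of the rotated $Q$ and the remaining cap at the bottom close off one of the internal loops. Using isotopy, I would rearrange so that the two copies of $Q$ sit in the side-by-side configuration that defines convolution, with the outer strands playing the role of spectators. Applying $Q*Q = cQ$ then replaces the two-box cluster by $c\,Q$, producing a reduced tangle with a single $Q$ together with three through-strands. The analogous step for $E_3(Q,Q)$: again locate the convolution configuration (now oriented by the rotated box on the right and the $Q$ at the bottom), apply $Q*Q = cQ$, and read off the reduced tangle. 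Matching the two outputs by isotopy gives the claimed equality.

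The main obstacle is bookkeeping the orientations: in $E_1(Q,Q)$ one copy of $Q$ sits at the top while the other is rotated by $90^{\circ}$, and the roles are swapped in $E_3(Q,Q)$. Because the rotation $\mathfrak{F}_s$ carries multiplication to convolution and vice versa, and because a biprojection remains (up to scalar) a projection after one full Fourier transform, the identity used to collapse the two-box cluster is the same on both sides, up to a rotation that is absorbed by the spherical/isotopy invariance of the Markov trace. For the self-adjoint condition and the behaviour of $Q$ under the dual $\overline{Q}$, I would invoke Proposition \ref{basic_cons} and the standard fact that biprojections are closed under $*$ and rotation (so that $Q^* = Q$ and the rotated copy satisfies the parallel identity in $\mathcal{P}_{2,-}$).

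If the direct diagrammatic match in the final step were awkward, the fallback is to expand both $E_1(Q,Q)$ and $E_3(Q,Q)$ using the resolution of identity \eqref{ROI} in the internal strand, compute the coefficients of each basis element $p_s, p_t$ via the trace pairing, and use the biprojection identity $N_{QQ}^{s}\propto \delta_{s\in\{0,Q\text{-support}\}}$ to see that the only surviving terms produce the same expression on both sides. Either route reduces the problem to a finite check controlled entirely by the single equation $Q*Q = cQ$.
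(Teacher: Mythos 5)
Your primary strategy has a genuine gap. In the open $3$-box $E_1(Q,Q)$ the two copies of $Q$ are joined by only \emph{one} internal strand (the corner strand between the top box and the rotated box); the other six strands all run to the boundary. Neither the multiplication configuration (two parallel vertical strands between the boxes) nor the convolution configuration (two parallel horizontal strands) occurs as a sub-tangle, so there is no place to apply $Q^2=Q$ or $Q*Q=cQ$, and the two-box cluster cannot be collapsed to $c\,Q$ by isotopy. The same is true of $E_3(Q,Q)$. This is exactly why the statement is not a formal consequence of isotopy plus the defining relations of $Q$ applied to the open diagrams: as elements of the $14$-or-more-dimensional space $\mathcal{P}_{3,+}$, $E_1(Q,Q)$ and $E_3(Q,Q)$ are a priori distinct vectors, and no local move identifies them.

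The missing idea — and the route the paper takes — is to pass to the norm: set $x = E_1(Q,Q) - E_3(Q,Q)$ and compute $\operatorname{tr}_3(x^*x)$. Closing up the trace diagram glues boundary strands together, and it is precisely this closure that creates the two-strand multiplication and convolution configurations between the various copies of $Q$; each of the four terms $\operatorname{tr}_3(E_a(Q,Q)^*E_b(Q,Q))$ for $a,b\in\{1,3\}$ then evaluates, using $Q^*=Q$, $Q^2=Q$ and $Q*Q=cQ$, to the same scalar, so the alternating sum vanishes and positivity (faithfulness) of the Markov trace gives $x=0$. Your fallback — expanding in the $T$-basis and comparing coefficients via the trace pairing — is essentially this computation in disguise and could be made to work, but as written it presupposes facts about the support of $Q*Q$ that you would still have to extract from the same closed-diagram evaluations; you should make the norm computation the main argument rather than a contingency.
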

    \begin{proof}
        Let $ x = E_1(Q,Q) = E_3(Q,Q) $, then it is easy to check $ \text{tr}_3(x^*x) = 0 $. By positivity of trace, we have $ x = 0 $.
    \end{proof}

    \begin{proposition}
        $ COB = \{T(i,j,k): N_{kj}^{i} \ne 0, 0 \le i, j, k \le n \} $ is an orthogonal set of the 3-box space $ \mathcal{P}_{3,+} $.
    \end{proposition}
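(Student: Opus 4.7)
The plan is to verify orthogonality by direct computation of the inner product $\langle T(i,j,k), T(i',j',k')\rangle = \text{tr}_3(T(i',j',k')^{*} T(i,j,k))$, rendered as a closed planar diagram. The tree structure of $T(i,j,k)$ — with $p_i$ at the top, $p_k$ at the bottom, and the rotated $p_j$ to the right, connected by single internal strands — is exactly what should make the trace factor through the orthogonality of minimal projections in $\mathcal{P}_{2,\pm}$. The goal is to reduce the inner product to $\delta_{ii'}\delta_{jj'}\delta_{kk'}$ times a strictly positive scalar whenever $N_{kj}^{i}\neq 0$, which simultaneously shows that every $T(i,j,k)\in COB$ is nonzero and that distinct elements are orthogonal.

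First I would draw $T(i',j',k')^{*} T(i,j,k)$ under $\text{tr}_3$, using the fact that the adjoint is vertical reflection and that each $p_m$ is self-adjoint. Tracking the resulting strands reveals three pairwise junctions. The two external top strands of $p_i$ glue pointwise to the two external top strands of $p_{i'}$, presenting a vertical composition of 2-boxes, so this junction equals $p_i p_{i'}=\delta_{ii'}p_i$ by orthogonality of minimal projections. The symmetric picture at the bottom yields $p_k p_{k'}=\delta_{kk'}p_k$. Finally, the two right-hand strands through $p_j$ glue to those through $p_{j'}$; in the rotated frame, this is again a vertical composition of minimal projections (of $\mathcal{P}_{2,-}$ via $\mathfrak{F}_s$), yielding $\delta_{jj'}$.

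Once the three Kronecker factors are extracted, the residual closed diagram contains one copy each of $p_i$, $p_j$, $p_k$ with precisely the connectivity that computes $\text{tr}_2((p_k*p_j)p_i)$, up to a positive combinatorial factor from the remaining strand closures. By Frobenius reciprocity (Proposition \ref{basic_cons}), this scalar equals a positive multiple of $N_{kj}^{i} d_i \delta$, which is nonzero precisely by the defining hypothesis on $COB$. Hence $T(i,j,k)\neq 0$, and $\langle T(i,j,k), T(i',j',k')\rangle = 0$ whenever $(i,j,k)\neq(i',j',k')$. The main obstacle is the diagrammatic bookkeeping for the rotated $p_j$, since one must verify carefully that its two right-side strands truly collapse to a compatible-shading multiplication when stacked with the reflected copy from $T^{*}$; I expect this to require a brief appeal to $\mathfrak{F}_s$ together with the spherical trace property, after which the remaining arithmetic is immediate from Proposition \ref{basic_cons}.
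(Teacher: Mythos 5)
Your proposal follows essentially the same route as the paper: both compute $\langle T(i,j,k),T(i',j',k')\rangle=\text{tr}_3(T(i',j',k')^*T(i,j,k))$ as a closed diagram, contract the paired boxes to extract $\delta_{ii'}\delta_{jj'}\delta_{kk'}$, and identify the residue with $\text{tr}_2((p_k*p_j)p_i)=N_{kj}^{i}d_i\delta$, so nonvanishing of the elements of $COB$ follows from $\delta>0$ and $d_i>0$. The one caveat is at the middle junction you flagged: $\mathfrak{F}_s(p_j)$ is not itself a minimal projection of $\mathcal{P}_{2,-}$, so the factor $\delta_{jj'}$ there comes from Plancherel, $\langle\mathfrak{F}_s(p_j),\mathfrak{F}_s(p_{j'})\rangle=\langle p_j,p_{j'}\rangle$, rather than from a literal product of projections — but this is precisely the appeal to $\mathfrak{F}_s$ and the trace property you anticipated, so the argument goes through.
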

    \begin{proof}
        We have 
    \begin{equation}
    \begin{aligned}
       \langle T(i,j,k), T(i',j',k') \rangle &= \text{tr}_3(T(i',j',k')^*T(i,j,k)) \\
       &= \begin{tikzpicture}[inner sep=0pt,minimum size=6mm,baseline={([yshift=-0.5ex]current bounding box.center)}]
        \node (i) at (0,1) [rectangle, draw]{$ p_{k^{'}} $};
        \node (j) at (1,0) [rectangle, draw, rotate=-90] {$ p_{j^{'}} $};
        \node (k) at (0,-1) [rectangle, draw]{$ p_{i^{'}} $};
        \draw (0.15,0.7) .. controls +(down:5mm) and+(left:5mm) .. (0.7,0.15);
        \draw (0.15,-0.7) .. controls +(up:5mm) and+(left:5mm) .. (0.7,-0.15);
        \draw (1.3,0.15) .. controls +(right:5mm) and +(down:5mm) .. (1.8,0.7) -- (1.8,1.6) ;
        \draw (1.3,-0.15) .. controls +(right:5mm) and +(up:5mm) .. (1.8,-0.7) -- (1.8,-1.6) ;
        \draw (-0.15,0.7) -- (-0.15,-0.7);
        \draw (0.15,1.3) -- (0.15,1.6);
        \draw (-0.15,1.3) -- (-0.15,1.6);
        \draw (0.15,-1.3) -- (0.15,-1.6);
        \draw (-0.15,-1.3) -- (-0.15,-1.6);
        \node (i) at (0,4.2) [rectangle, draw]{$ p_i $};
        \node (j) at (1,3.2) [rectangle, draw, rotate=90] {$ p_j $};
        \node (k) at (0,2.2) [rectangle, draw]{$ p_k $};
        \draw (0.15,3.9) .. controls +(down:5mm) and+(left:5mm) .. (0.7,3.35);
        \draw (0.15,2.5) .. controls +(up:5mm) and+(left:5mm) .. (0.7,3.05);
        \draw (1.3,3.35) .. controls +(right:5mm) and +(down:5mm) .. (1.8,3.9) -- (1.8,4.8) ;
        \draw (1.3,3.05) .. controls +(right:5mm) and +(up:5mm) .. (1.8,2.5) -- (1.8,1.6) ;
        \draw (-0.15,3.9) -- (-0.15,2.5);
        \draw (0.15,4.5) -- (0.15,4.8);
        \draw (-0.15,4.5) -- (-0.15,4.8);
        \draw (0.15,1.9) -- (0.15,1.6);
        \draw (-0.15,1.9) -- (-0.15,1.6);
        \draw (1.8,4.8) .. controls (1.8,5) and (2,5) .. (2,4.8) -- (2,-1.6);
        \draw (1.8,-1.6) .. controls (1.8,-1.8) and (2,-1.8) .. (2,-1.6);
        \draw (0.15,4.8) .. controls (0.15,6) and (2.5,6) .. (2.5,4.8) -- (2.5,-1.6);
        \draw (0.15,-1.6) .. controls (0.15,-2.8) and (2.5,-2.8) .. (2.5,-1.6);
        \draw (-0.15,4.8) .. controls (-0.15,6.5) and (2.8,6.5) .. (2.8,4.8) -- (2.8,-1.6);
        \draw (-0.15,-1.6) .. controls (-0.15,-3.3) and (2.8,-3.3) .. (2.8,-1.6);
    \end{tikzpicture}
    =\delta_{ii^{'}}\delta_{jj^{'}}\delta_{kk^{'}}{N}_{kj}^{i}d_i\delta.
    \end{aligned}
    \end{equation}
    It follows that different $ T $-vectors are orthogonal.
    Note that $ \delta > 0  $ and $ d_i = \frac{1}{\delta}\text{tr}_2(p_i) > 0 $, so $ T(i,j,k) = 0 \Longleftrightarrow N_{kj}^{i} = 0 $.
    \end{proof}

    \begin{proposition}\label{OBP3}
        $ COB = \{T(i,j,k): N_{kj}^{i} \ne 0 \} $ is an orthogonal basis of $ \mathcal{P}_{3,+} $.
   \end{proposition}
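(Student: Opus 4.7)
The plan is to show that $COB$ spans $\mathcal{P}_{3,+}$; combined with the linear independence of the non-zero $T(i,j,k)$ inherited from the orthogonality in the preceding proposition, this immediately yields the orthogonal basis. Since $\mathcal{P}$ is an exchange relation planar algebra, it is generated as a planar algebra by $\mathcal{P}_{2,+}$, so any $x \in \mathcal{P}_{3,+}$ is a linear combination of planar tangles on a 3-box disk with internal 2-box labels. Inserting the resolution of identity \eqref{ROI} at every internal 2-box, I may assume all labels are minimal projections $p_i \in L_2$.

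I would then reduce any such labelled tangle to a linear combination of $T(i,j,k)$'s by iterating three local moves: (i) vertical multiplication \eqref{multiplication}, which collapses two stacked projections via $p_i p_j = \delta_{ij} p_i$; (ii) horizontal convolution \eqref{convolution}, which expands two side-by-side projections via the fusion rule $p_k * p_j = \sum N_{kj}^i p_i$; and (iii) the exchange relation \eqref{exr2}, which rewrites any $E_1(i,j)$ sub-configuration as a linear combination of $E_2$ and $E_3$ sub-configurations. The induction is on a complexity measure on the 4-valent planar graph underlying the tangle (for instance the number of internal 4-valent vertices, broken by the number of local $E_1$ adjacencies). The terminal normal forms have at most three 2-boxes arranged in the $T$-shape, with $p_0$ (the Jones projection) padding any missing slots via its cap-cup representation. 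This writes $x$ as a linear combination of $T(i,j,k)$'s; by the preceding proposition only those with $N_{kj}^i \neq 0$ survive.

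The hard part will be rigorously certifying termination and completeness of this reduction: one needs a well-founded complexity measure that strictly decreases under (i)--(iii) together with a planarity argument that any labelled 3-box tangle not yet in $T$-form admits at least one applicable move (in particular, that some $E_1$-type local configuration is always exposed). A cleaner fallback route, which I would run in parallel, is a direct dimension count via the decomposition $\mathcal{P}_{3,+} = \bigoplus_{i,k} p_i \cdot \mathcal{P}_{3,+} \cdot p_k$ under the top- and bottom-strand actions of $\mathcal{P}_{2,+}$, combined with the Frobenius reciprocity from Proposition \ref{basic_cons}, to identify $\dim(p_i \cdot \mathcal{P}_{3,+} \cdot p_k)$ with $\#\{j : N_{kj}^i \neq 0\}$; summing matches $|COB|$, which together with orthogonality forces $COB$ to be a basis.
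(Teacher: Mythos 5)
Your overall skeleton matches the paper's: orthogonality from the preceding proposition gives linear independence, so the content is spanning, and spanning is obtained by reducing arbitrary labelled $3$-tangles to face-free ones ($E_1$-, $E_2$-, $E_3$-vectors) and then to $T$-vectors via the resolution of identity \eqref{ROI}. The difference is that the paper disposes of the reduction step in one line by citing Theorem 1 of \cite{Landau2002Exchange}, which says precisely that an exchange relation planar algebra is spanned in each box space by tangles with no internal faces, whereas you set out to re-derive this and explicitly leave the crucial part open: you acknowledge that you have not produced a well-founded complexity measure that strictly decreases under your moves (i)--(iii), nor the planarity argument that a reducible local configuration is always exposed. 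That is a genuine gap as written --- move (ii), convolution, \emph{increases} the number of terms and does not obviously decrease any naive vertex count, and move (iii) can be applied indefinitely back and forth, so without a careful measure the induction does not close. The gap is fillable: either cite Landau as the paper does, or use the Euler-characteristic count of Lemma \ref{lem2} (any connected $n$-box with an internal face, $n\le 3$, has an internal $k$-face with $k\le 3$, and each such face is removable by a move that strictly decreases the number of faces), but some such argument must actually be supplied.

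Your fallback route is circular and should be dropped: the identification $\dim(p_i\,\mathcal{P}_{3,+}\,p_k)=\#\{j: N_{kj}^i\neq 0\}$ is not something you can establish independently --- orthogonality only gives the inequality $\ge$, and the reverse inequality \emph{is} the spanning statement you are trying to prove. Unless you compute $\dim\mathcal{P}_{3,+}$ by some external means (e.g.\ from the principal graph, which is not available here), the dimension count gives you nothing new.
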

   \begin{proof}
       $ \mathcal{P}_{3,+} $ is linearly spanned by elements with no internal faces according to Theorem 1 in \cite{Landau2002Exchange}, i.e. $ E_1, E_2, E_3 $-vectors. Because of the resolution of identity, $ E_1, E_2, E_3 $-vectors are  a sum of $ T $-vectors in $ \mathcal{P}_{3,+} $.
   \end{proof}
   \begin{remark}
       More generally, $ COB $ is an orthogonal basis of $ \mathcal{P}_{3,+} $ if $ \mathcal{P} $ has Yang-Baxter relations.
   \end{remark}

    \begin{proposition}
        Based on our choice of the generator set $ L_2 $, The parameters $ \{a_{st}^{ij}\} $, $ \{b_{\ell m}^{ij}\} $ in Exchange Relation \ref{exr2} satisfy
        \begin{align}
            a_{ik}^{ij} + b_{jk}^{ij} &= 1, \\
            a_{sk}^{ij} + b_{lk}^{ij} &= 0 \quad (s\ne i\  \text{or}\  \ell\ne j),\label{labeq}
        \end{align}
        for non-zero basis vectors $ T(i,j,k) $ and $ T(s,l,k) $.
    \end{proposition}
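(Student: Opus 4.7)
The strategy is to expand both sides of Exchange Relation \ref{exr2} in the orthogonal $T$-basis of $\mathcal{P}_{3,+}$ (Proposition \ref{OBP3}) and then compare coefficients. The core step is the triple of basis expansions
\begin{align*}
    E_1(i,j) = \sum_{k} T(i,j,k), \qquad
    E_2(s,t) = \sum_{j} T(s,j,t), \qquad
    E_3(\ell,m) = \sum_{s} T(s,\ell,m),
\end{align*}
where each sum ranges over dummy indices yielding non-zero $T$-vectors. Each of $E_1, E_2, E_3$ is a 3-box in which two of the three ``projection slots'' of a $T$-vector -- namely top-left, middle-right, and bottom-left -- are occupied while one is empty; inserting a resolution of identity at the empty slot converts $E_i$ into the corresponding sum of $T$-vectors.

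For $E_1$ (missing the bottom-left slot), stacking the identity $1 = \sum_k p_k$ as a 2-box on the two lower-left strands reproduces $T(i,j,k)$ on each summand by direct inspection of the diagrams. The argument for $E_3$ (missing the top-left slot) is symmetric, stacking $\sum_s p_s$ on the two upper-left strands. The expansion for $E_2$ is the most delicate and is the main obstacle: the empty slot is the middle-right position, where an inserted 2-box appears rotated $90^\circ$. One must isotope the two right-side strands of $E_2(s,t)$ -- the curve from $p_s$'s lower-right to the top-right boundary and the curve from $p_t$'s upper-right to the bottom-right boundary -- into a configuration where the rotated resolution $\sum_{j} \mathfrak{F}_s(p_j)$ can be inserted, and then verify pictorially that the resulting diagram with rotated $p_j$ is isotopic to $T(s, j, t)$.

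Once the three expansions are in hand, substitute them into the exchange relation and re-index both triple sums so that they share the common index $(s, \ell, k)$, yielding
\begin{align*}
    \sum_k T(i,j,k) = \sum_{s,\ell,k} \bigl(a_{sk}^{ij} + b_{\ell k}^{ij}\bigr)\, T(s,\ell,k).
\end{align*}
By linear independence of the non-zero $T$-vectors (Proposition \ref{OBP3}), comparing coefficients of each non-zero $T(s,\ell,k)$ gives
\begin{align*}
    a_{sk}^{ij} + b_{\ell k}^{ij} = \delta_{si}\,\delta_{\ell j}.
\end{align*}
Specializing to $(s,\ell)=(i,j)$ yields $a_{ik}^{ij}+b_{jk}^{ij}=1$, and specializing to $(s,\ell)\neq (i,j)$ with $T(s,\ell,k)$ non-zero yields $a_{sk}^{ij}+b_{\ell k}^{ij}=0$, as required.
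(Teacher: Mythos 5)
Your proposal is correct and follows essentially the same route as the paper: expand $E_1$, $E_2$, $E_3$ into the orthogonal $T$-basis via the resolution of identity, substitute into the exchange relation, and compare coefficients of the non-zero $T(s,\ell,k)$ to get $a_{sk}^{ij}+b_{\ell k}^{ij}=\delta_{si}\delta_{\ell j}$. You simply spell out the diagrammatic details of the three expansions (which the paper records elsewhere, e.g. in Lemma \ref{thm:acyclic}) more explicitly than the paper's one-line proof does.
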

    
    \begin{proof}
        Write $ E_1, E_2, E_3 $-vectors as a sum of basis vectors in Exchange Relation \ref{exr2} and compare the coefficients of basis vectors in $ \mathcal{P}_{3,+} $.
    \end{proof}

    \subsection{Fusion bialgebra}

    Fusion bialgebra is an axiomatization of the structure of 2-boxes of subfactor planar algebras. The original definition of a fusion bialgebra in \cite{Liu2021Fusion} is a quintuple, we give an equivalent concise definition in this section to simplify notations.

    Let $ \mathbb{R}_{\ge 0} $ be the set of non-negative real numbers.
    \begin{definition}
        Let $ \mathcal{B} $ be a unital $*$-algebra over the complex field $ \mathbb{C} $. We say $ \mathcal{B} $ has a $ \mathbb{R}_{\ge 0} $-basis $ B=\{x_{0}=1_{\mathcal{B}}, x_{1}, \cdots, x_{n}\} $ if
        \begin{enumerate}
            \item[(1)] ${x_0, . . . , x_n}$ is a linear basis over $ \mathbb{C} $;
            \item[(2)] $ x_jx_k=\sum\limits_{s=0}^{n} \widetilde{N}_{j,k}^{s}x_s $, $ \widetilde{N}_{j,k}^{s}\in \mathbb{R}_{\ge 0} $;
            \item[(3)]  there exists an involution $ * $ on $\{0, 1, 2, . . . , n\}$ such that $ x_{k}^{*}=x_{k^{*}} $ and $ \widetilde{N}_{j,k}^{0}=\delta_{j,k^{*}} $.
        \end{enumerate}
    \end{definition}

    The definition of $ \mathbb{R}_{\ge 0} $-basis is similar to that of fusion ring, except that the fusion coefficients $ \mathbb{Z}_{+} $ are replaced by $ \mathbb{R}_{\ge 0} $.

    \begin{definition}
        Suppose $ \mathcal{A} $ and $ \mathcal{B} $ are two finite dimensional $C^*$-algebras with faithful traces $d$ and $ \tau $ respectively, $ \mathcal{A} $ is commutative, and $\mathscr{F} : \mathcal{A} \to \mathcal{B} $ is a unitary transformation preserving $2$-norms. We call the quintuple $(\mathcal{A}, \mathcal{B}, \mathscr{F}, d, \tau)$ a fusion bialgebra, if the following conditions hold:
        \begin{enumerate}
            \item[(1)] Schur Product: For operators $x, y \ge 0$ in $ \mathcal{A} $, $x * y := \mathscr{F}^{-1}(\mathscr{F}(x)\mathscr{F}(y)) \ge 0$ in $ \mathcal{A} $.
            \item[(2)] Modular Conjugation: The map $ J(x) := \mathscr{F}^{-1}(\mathscr{F}(x)^*)$ is an anti-linear, $*$-isomorphism on $ \mathcal{A} $. 
            \item[(3)] Jones Projection: The operator $ \mathscr{F}^{-1}(1) $ is a positive multiple of a minimal projection in $ \mathcal{A} $.
        \end{enumerate}
        Furthermore, if $ \mathscr{F}^{-1}(1) $ is a minimal projection and $ d(\mathscr{F}^{-1}(1))=1 $, then we call the fusion bialgebra canonical.
    \end{definition}

    Given a fusion bialgebra $(\mathcal{A}, \mathcal{B}, \mathscr{F}, d, \tau)$, then $(\mathcal{A}, \mathcal{B}, \lambda_{1}^{\frac{1}{2}}\lambda_{2}^{-\frac{1}{2}}\mathscr{F}, \lambda_1 d, \lambda_2 \tau)$ is also a fusion bialgebra with $ \lambda_1, \lambda_2 > 0 $ by definition. This is called \textbf{gauge transformation}. Therefore, any fusion bialgebra is equivalent to a canonical one up to a gauge transformation. 
    
    The definition of fusion bialgebra is a direct translation of properties of the structure of 2-boxes of subfactor planar algebras. It is proved that fusion bialgebra and $ \mathbb{R}_{\ge 0} $-basis are equivalent.

    \begin{theorem}[Liu-Palcoux-Wu, Theorem 2.17 in \cite{Liu2021Fusion}]\label{lpw}
        If $(\mathcal{A}, \mathcal{B}, \mathscr{F}, d, \tau)$ is a fusion bialgebra, then $ \mathcal{B} $ has a unique $ \mathbb{R}_{\ge 0} $-basis $ B=\{x_{0}, x_{1}, \cdots, x_{m} \}$, such that $\mathscr{F}^{-1}(x_j)$ are multiples of minimal projections of $ \mathcal{A} $. Moreover, $ B $ is invariant under the gauge transformation. Conversely, any $ C^* $-algebra $ \mathcal{B} $ with a $ \mathbb{R}_{\ge 0} $-basis $ B $ can be extended to a canonical fusion bialgebra, such that $\mathscr{F}^{-1}(x_j)$ are multiples of minimal projections of $ \mathcal{A} $.
    \end{theorem}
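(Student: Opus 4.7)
The plan is to match up the data of a fusion bialgebra $(\mathcal{A}, \mathcal{B}, \mathscr{F}, d, \tau)$ with the data of an $\mathbb{R}_{\ge 0}$-basis of $\mathcal{B}$, using the minimal projections of the commutative algebra $\mathcal{A}$ as the bridge between the two descriptions.

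For the forward direction, since $\mathcal{A}$ is a finite-dimensional commutative $C^*$-algebra, it decomposes uniquely as $\mathcal{A} = \bigoplus_{j=0}^m \mathbb{C} e_j$ for a set of pairwise orthogonal minimal projections $\{e_j\}$. The Jones projection axiom supplies a distinguished minimal projection, which I label $e_0$, with $\mathscr{F}^{-1}(1_{\mathcal{B}}) = c_0 e_0$, $c_0 > 0$. I set $x_j := \lambda_j \mathscr{F}(e_j)$ with $\lambda_j > 0$ to be determined, taking $\lambda_0 = c_0$ so $x_0 = 1_{\mathcal{B}}$. To verify axiom (2), I compute $x_j x_k = \lambda_j \lambda_k \mathscr{F}(e_j * e_k)$ and expand $e_j * e_k = \sum_s c_{j,k}^s e_s$; the Schur product axiom forces $c_{j,k}^s \ge 0$, so that $\widetilde{N}_{j,k}^s = \lambda_j \lambda_k c_{j,k}^s / \lambda_s \ge 0$. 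For axiom (3), the modular conjugation $J$ is an anti-linear $*$-automorphism of commutative $\mathcal{A}$, hence permutes minimal projections, inducing an involution $j \mapsto j^*$ with $J(e_j) = e_{j^*}$; then $x_j^* = \bar{\lambda}_j \mathscr{F}(e_j)^* = \bar{\lambda}_j \mathscr{F}(e_{j^*})$, and the requirement $x_j^* = x_{j^*}$ forces $\lambda_j = \lambda_{j^*} \in \mathbb{R}_{>0}$. The Jones normalization $\widetilde{N}_{j,k}^0 = \delta_{j,k^*}$ then pins each $\lambda_j$ down uniquely via the quadratic relation $\lambda_j^2 = \lambda_0 / c_{j,j^*}^0$, using that $c_{j,k}^0 = 0$ whenever $k \ne j^*$ (a consequence of $e_0$ being the Jones projection and the trace compatibility of $\mathscr{F}$).

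For the converse, set $\mathcal{A} := \mathbb{C}^{m+1}$ with standard minimal projections $\{e_j\}$, and define $\mathscr{F}: \mathcal{A} \to \mathcal{B}$ linearly by $\mathscr{F}(e_j) := \mu_j x_j$ for scalars $\mu_j > 0$ to be chosen. Equip $\mathcal{A}$ and $\mathcal{B}$ with traces $d$ and $\tau$ specified on basis elements so that $\mathscr{F}$ is a 2-norm isometry; the fusion rules together with the involution ensure that the $\tau$ produced in this way is a faithful trace on $\mathcal{B}$. Taking $\mu_0 = 1$ makes $\mathscr{F}^{-1}(1_{\mathcal{B}}) = e_0$ a minimal projection with $d(e_0) = 1$, giving a canonical fusion bialgebra. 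The Schur product axiom reduces to non-negativity of the $\widetilde{N}_{j,k}^s$; modular conjugation is read off from the involution in (3); and the Jones projection axiom holds by construction.

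Uniqueness and gauge invariance follow because the scaling analysis above determines each $\lambda_j$ (and hence $x_j$) intrinsically from the Jones projection, the trace normalizations, and the involution. Under a gauge transformation $(\mathscr{F}, d, \tau) \mapsto (\lambda_1^{1/2}\lambda_2^{-1/2}\mathscr{F}, \lambda_1 d, \lambda_2 \tau)$, the projections $e_j$ are intrinsic to $\mathcal{A}$, while $c_0^{\text{new}} = \lambda_1^{-1/2}\lambda_2^{1/2} c_0$ and $c_{j,j^*}^{0,\text{new}} = \lambda_1^{1/2}\lambda_2^{-1/2} c_{j,j^*}^0$ combine to give $\lambda_j^{\text{new}} = \lambda_1^{-1/2}\lambda_2^{1/2}\lambda_j$, which exactly cancels the rescaling of $\mathscr{F}(e_j)$, so $x_j^{\text{new}} = x_j$. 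The main obstacle is handling the scaling coherently: one must show that the system of constraints coming from Schur positivity, the involution, and the Jones normalization admits a unique positive solution in the $\lambda_j$'s, and the decisive technical input is the verification that $c_{j,k}^0$ vanishes unless $k = j^*$, which is where the full strength of the Jones projection axiom (not merely the existence of a distinguished minimal projection) is needed.
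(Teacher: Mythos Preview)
The paper does not contain its own proof of this statement: it is quoted verbatim as Theorem~2.17 of \cite{Liu2021Fusion} and used as a black box, so there is no in-paper argument to compare against. Your outline is the natural route and matches what one expects the original proof in \cite{Liu2021Fusion} to do: exploit that a finite-dimensional commutative $C^*$-algebra has a canonical basis of minimal projections, push them through $\mathscr{F}$, and normalize using the Jones projection and involution axioms. You have also correctly isolated the one nontrivial step, namely that $c_{j,k}^0=0$ unless $k=j^*$, which is what forces the normalization constants $\lambda_j$ to be uniquely determined; this is indeed where the Jones projection axiom together with the unitarity of $\mathscr{F}$ and the trace does the work. One small point to tighten: in the converse you should say explicitly how $\tau$ is built (via $\tau(x_j^*x_k)=\delta_{j,k}d(e_j)$ extended by the fusion rules, or equivalently via Frobenius--Perron weights), since faithfulness and positivity of $\tau$ on a possibly noncommutative $\mathcal{B}$ are not automatic from the data you wrote down.
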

    
    Hence, we adopt an equivalent definition of fusion bialgebra.
    \begin{definition}
        Let $ \mathcal{B} $ be a unital $*$-algebra over the complex field $ \mathbb{C} $. We say $ \mathcal{B} $ is a fusion bialgebra if $ \mathcal{B} $ has a $ \mathbb{R}_{\ge 0} $-basis.
        
    \end{definition}

    We reformulate Theorem 2.24 in \cite{Liu2021Fusion} that asserts $ (\mathcal{P}_{2,+}, \mathcal{P}_{2,-}, \mathfrak{F}_s, \text{tr}_{2,+}, \text{tr}_{2,-}) $ is a fusion bialgebra.

    \begin{theorem}[Theorem 2.24 in \cite{Liu2021Fusion}]\label{fub}
        Suppose $ \mathcal{P} $ is a subfactor planar algebra. If $ \mathcal{P}_{2,+} $ is abelian, then $ \mathcal{P}_{2,+} $ is a fusion bialgebra with convolution and dual as involution.
    \end{theorem}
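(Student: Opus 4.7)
The plan is to directly exhibit an $\mathbb{R}_{\ge 0}$-basis on the abelian $C^*$-algebra $\mathcal{P}_{2,+}$ equipped with convolution as its multiplication, thereby establishing the equivalent form of the fusion bialgebra definition. Concretely, I would take the pairwise orthogonal minimal projections $\{p_0, p_1, \ldots, p_n\}$ of $\mathcal{P}_{2,+}$ (with respect to the vertical multiplication) and rescale them to $x_i := d_i^{-1/2}\, p_i$, where $d_i>0$ is the quantum dimension, and then verify the three defining conditions.

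Condition (1) is immediate: because $\mathcal{P}_{2,+}$ is abelian, the $p_i$ form a $\mathbb{C}$-linear basis, and rescaling by positive scalars preserves this. For condition (2), a direct expansion gives $x_k * x_j = \sum_s \widetilde{N}_{k,j}^s\, x_s$ with $\widetilde{N}_{k,j}^s = d_s^{1/2} d_k^{-1/2} d_j^{-1/2} N_{k,j}^s$; the non-negativity $\widetilde{N}_{k,j}^s \ge 0$ reduces to $N_{k,j}^s \ge 0$, which is the key positivity input and follows from the Schur product theorem for subfactor planar algebras in \cite{Liu2016Exchange}. To see that $x_0 = p_0$ is the unit, I would first compute $d_0 = 1$ graphically (capping off one side of the Jones projection produces a single closed circle, cancelling the $1/\delta$ normalization), and then combine this with Frobenius reciprocity (Prop. \ref{basic_cons}), which forces $N_{0,j}^i = \delta_{ij}$ and hence $p_0 * p_j = p_j$.

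For condition (3), I would take the involution on indices to be the dual $i \mapsto \bar i$ and extend the dual map on $\mathcal{P}_{2,+}$ antilinearly to serve as the $*$-operation of the fusion bialgebra. Using $d_i = d_{\bar i}$ from Prop. \ref{basic_cons} together with the fact that each $p_i$ is self-adjoint, one checks $x_i^* = x_{\bar i}$. The remaining identity $\widetilde{N}_{k,j}^0 = \delta_{k,\bar j}$ then follows from Frobenius reciprocity $N_{k,j}^0 = \delta_{k,\bar j}\, d_k$ in Prop. \ref{basic_cons}, combined with the normalization $d_0^{1/2} d_k^{-1/2} d_{\bar k}^{-1/2}\, d_k = 1$ afforded by the specific choice $\alpha_i = d_i^{-1/2}$.

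The main technical point is reconciling the $180^\circ$ rotation, a $\mathbb{C}$-linear map on $\mathcal{P}_{2,+}$, with the antilinear involution required of a complex $*$-algebra. This is resolved by defining $*$ to act on each real basis vector $x_i$ as the dual and extending $\mathbb{C}$-antilinearly; the required compatibility $(a*b)^* = b^* * a^*$ then reduces, using commutativity of convolution, to the planar identity $\overline{a*b} = \overline{a} * \overline{b}$, a direct consequence of how $180^\circ$ rotation interacts with horizontal composition. An alternative route is to invoke Theorem \ref{lpw} by verifying the three axioms of the original quintuple definition for $(\mathcal{P}_{2,+}, \mathcal{P}_{2,-}, \mathfrak{F}_s, \text{tr}_{2,+}, \text{tr}_{2,-})$: Schur product from \cite{Liu2016Exchange}, the modular conjugation realized as horizontal reflection via $\mathfrak{F}_s$, and the Jones projection axiom, since $\mathfrak{F}_s$ sends a scalar multiple of $p_0$ to the identity of $\mathcal{P}_{2,-}$.
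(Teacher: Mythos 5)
Your overall strategy is the same as the paper's: rescale the minimal projections to produce an $\mathbb{R}_{\ge 0}$-basis for convolution and check the three axioms, with Schur product supplying positivity and Frobenius reciprocity supplying $\widetilde{N}^0_{k,j}=\delta_{k,\bar j}$. However, there is a concrete computational error in your normalization. You claim $d_0=1$, but in fact $d_0=1/\delta$: the quantum dimension $d_i$ is defined by capping the \emph{right} side of $p_i$, and capping the right side of the Jones projection $p_0=\frac{1}{\delta}\,e$ produces a single through-strand with \emph{no} closed circle, so $d_0=\frac{1}{\delta}$. (Equivalently, $\mathrm{tr}_2(p_0)=1=d_0\delta$.) The computation you describe, in which a closed circle appears and cancels the $1/\delta$, is the \emph{top} cap, which computes $c_0=1$, not $d_0$. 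This error propagates: Frobenius reciprocity gives $N^i_{0,j}=\delta_{ij}d_0=\delta_{ij}/\delta$, so $p_0*p_j=\frac{1}{\delta}p_j$ and your $x_0=d_0^{-1/2}p_0=\sqrt{\delta}\,p_0$ is \emph{not} the convolution unit (the unit is $\delta p_0$); likewise your $\widetilde{N}^0_{k,j}$ comes out to $\delta^{-1/2}\delta_{k,\bar j}$ rather than $\delta_{k,\bar j}$, so axioms (1) (unitality of $x_0$) and (3) both fail as you have set things up.

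The repair is exactly the paper's choice of normalization $x_j=\sqrt{\delta/d_j}\,p_j$, which differs from yours by the extra factor $\sqrt{\delta}$; with it, $x_0=\delta p_0$ is the unit, $\widetilde{N}^s_{j,k}=\frac{N^s_{j,k}\sqrt{\delta d_s}}{\sqrt{d_jd_k}}\ge 0$, and $\widetilde{N}^0_{j,k}=\delta_{j,\bar k}$. Your remaining points are fine: positivity of the $N^s_{j,k}$ via the Schur product theorem, $x_j^*=x_{\bar j}$ via $d_j=d_{\bar j}$, and the observation that the antilinear involution is obtained by extending the (linear) $180^\circ$ rotation antilinearly from the real span of the basis is a legitimate clarification that the paper's one-line proof leaves implicit.
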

    \begin{proof}
        Take $ x_j = \sqrt{\frac{\delta}{d_j}} p_j $ and $ x_{j^{*}} = \sqrt{\frac{\delta}{d_{\overline{j}}}} p_{\overline{j}} $ and let $ \widetilde{N}_{jk}^{s} = \frac{N_{jk}^{s}\sqrt{\delta d_s}}{\sqrt{d_j d_k}} $. Then we have
        \begin{align*}
            x_j*x_k &= \frac{\delta}{\sqrt{d_j d_k}} p_j * p_k  \\
            &= \sum_{s=0}^{n} \frac{\delta}{\sqrt{d_j d_k}} N_{jk}^{s} p_s \\
            &= \sum_{s=0}^{n} \widetilde{N}_{jk}^{s}x_s.
        \end{align*}
        Moreover, $ \widetilde{N}_{jk}^{s}\in \mathbb{R}_{\ge 0} $, $ x_0 $ is the identity, $ \overline{x_j} = x_{\overline{j}} $, and $ \widetilde{N}_{jk}^{0} = \delta_{j\overline{k}} $. Hence, $ \{x_0,x_1,\dots, x_n\} $ is a $ \mathbb{R}_{\ge 0} $-basis of $ \mathcal{P}_{2,+} $ and $ \mathcal{P}_{2,+} $ is a fusion bialgebra.
    \end{proof}

    \begin{definition}
        A fusion bialgebra $ \mathcal{B} $ is called subfactorizable if it can be realized as the 2-box space $ \mathcal{P}_{2,+} $ of an irreducible subfactor planar algebra $ \mathcal{P} $.
        A subfactorizable fusion bialgebra has exchange relations if it originates from an exchange relation planar algebra, i.e. its $ \mathbb{R}_{\ge 0} $-basis satisfies Exchange Relation \ref{exr2}.
    \end{definition}
    \begin{remark}
        When $ \mathcal{B} $ is subfactorizable, we may use minimal projections of $ \mathcal{P}_{2,+} $ as a generator set instead of $ \mathbb{R}_{\ge 0} $-basis.
    \end{remark}
    \begin{remark}
        If a fusion ring is the Grothendieck ring of a unitary fusion category, it can be realized as the 2-box space of a subfactor planar algebra using the quantum double construction, such that the ring multiplication is implemented by the convolution of 2-boxes. Thus categorification implies subfactorization. It is interesting to find subfactorizable fusion ring that cannot be categorified. A potential candidate is $ \mathfrak{F}_{210} $ (see page 55 in \cite{Liu2021Fusion}). Although it is not categorical, whether it is subfactorizable remains an open question.
    \end{remark}

    \section{Evaluation and Consistency}

    Let $ L_2 $ be a 2-box generator set and $ \mathcal{P}(L_2) $ be the universal planar algebra. Its vector space $ \mathcal{P}(L_2)_{n,\pm} $ is the linear span of labelled $ 2n $-planar tangles, i.e., planar tangles with $ 2n $ boundary points whose input discs are labelled by elements in $ L_2 $. The main task to construct a planar algebra from a universal planar algebra is to define a positive semi-definite partition function $ Z $ for the universal planar algebra. Skein theory is a systematic approach to derive a partition function from suitable skein relations. In this process, three fundamental problems are
    \begin{enumerate}[label={$(\arabic*)$}]
        \item Evaluation: Every 0-box can be evaluated to a scalar. Moreover, each $ n $-box space is supposed to be finite dimensional.
        \item Consistency: Different evaluation paths give the same scalar. Then the partition function of the 0-box is defined to be this scalar. 
        \item Unitarity: The partition function derived from the evaluation is positive semi-definite with respect to the involution. Then the quotient of the universal planar algebra by the null space of the partition function is a subfactor planar algebra.
    \end{enumerate}

    We consider the case that $ \mathcal{P}(L_2) $ has exchange relations.
    Landau proved that each $ n $-box space is finite dimensional by Theorem 1 in \cite{Landau2002Exchange}. The second author gave an estimate on the upper bound of the dimensions that $ \dim \mathcal{P}_{n+1} \le (\dim \mathcal{P}_n)^{2} + (\dim \mathcal{P}_{2}-1)^{n} $ by Proposition 2.24 in \cite{Liu2016Exchange}.

    In this section, we design an evaluation algorithm and investigate the complete consistency conditions. When $ \mathcal{P}_{2,+} $ is a fusion bialgebra, we obtain algebraic constraints on parameters of exchange relation planar algebras, the so-called structure equations.
    In the notation of skein diagram, an $ n $-box is a planar graph having $ 2n $ 1-valent vertices as boundary points and some 4-valent vertices as internal crossings labelled by $ L_2 $. A $ k $-face is a face having $ k $-vertices. It should be pointed out that faces could be shaded or unshaded.

    \begin{lemma}\label{lem1}
        Suppose $ D $ is a non-empty connected 0-box in $ \mathcal{P}(L_2) $ and $ D $ has no $ 0,1,2 $-faces, then $ D $ has at least eight $ 3 $-faces. 
    \end{lemma}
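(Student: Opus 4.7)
The plan is to apply Euler's formula to $D$ viewed as a connected planar graph. Since $D$ is a 0-box, it has no boundary points, and every internal vertex comes from a labelled crossing with label in $L_2$, hence is 4-valent. So $D$ is a 4-regular connected planar multigraph embedded in the sphere (or plane with unbounded face). The non-emptiness plus the assumption that $D$ has no 0, 1, 2-faces ensures in particular that $D$ is not a disjoint collection of circles, so the vertex set is non-empty.

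Next I would run the standard face-counting argument. Let $V$, $E$, $F$ denote the numbers of vertices, edges and faces, and let $f_i$ be the number of $i$-faces (faces bounded by $i$ edges, counted with multiplicity). From 4-regularity, the handshake lemma gives
\begin{equation*}
2E \;=\; 4V, \qquad \text{so} \qquad E \;=\; 2V.
\end{equation*}
Euler's formula $V - E + F = 2$ then yields $F = V + 2$. Counting edge-face incidences in two ways gives $\sum_{i\ge 0} i\, f_i = 2E = 4V$. Combining with $\sum_{i\ge 0} f_i = F = V+2$, I obtain the identity
\begin{equation*}
\sum_{i\ge 0} (4-i)\, f_i \;=\; 4F - 2E \;=\; 4(V+2) - 4V \;=\; 8.
\end{equation*}

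Finally I invoke the hypothesis $f_0=f_1=f_2=0$ to reduce this to
\begin{equation*}
f_3 \;=\; 8 + \sum_{i\ge 5} (i-4)\, f_i \;\ge\; 8,
\end{equation*}
which is the desired bound. The only subtle point I expect is bookkeeping at the boundary between the combinatorial graph $D$ and its diagrammatic presentation: one must confirm that no crossing contributes a vertex of valence other than $4$, that no extraneous strands create phantom vertices or multi-edges that would corrupt the handshake count, and that a loop (edge with coincident endpoints) still contributes $2$ to the degree of that vertex and bounds two face-incidences. Once these conventions are pinned down, the argument is a direct application of Euler's formula, so the main work is conceptual rather than computational.
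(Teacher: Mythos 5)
Your argument is correct and is essentially the paper's own proof: both use $4$-regularity to get $e=2v$, Euler's formula to get $f=v+2$, and an edge--face incidence count together with the absence of $0,1,2$-faces to conclude $f_3\ge 8$ (you phrase the final step as the exact identity $\sum_i(4-i)f_i=8$ where the paper uses the equivalent inequality). No issues.
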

    \begin{proof}
        $ D $ can be viewed as a connected $ 4 $-regular planar graph. Let $ v $ be the number of vertices, $ e $ the number of edges, $ f $ the number of faces, and $ k $ the number of $ 3 $-faces. Since each vertex has degree 4, we have $ e = 2v $. Euler's formula $ v-e+f=2 $ gives $ f=v+2 $. Since $ D $ has no $ 0,1,2 $-faces, then $ e - \frac{3k}{2}  \ge 2(f - k) $. It follows that $ k \ge 8 $.
    \end{proof} 

    \begin{lemma}\label{lem2}
        If there exists an internal face in a connected $ n $-box $ D $ for $ n = 1, 2, 3 $, then there must be an internal $ k $-face in $ D $ for $ k = 1, 2, 3 $.
    \end{lemma}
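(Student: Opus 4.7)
The plan is to mimic the Euler-counting argument of Lemma~\ref{lem1}. I view $D$ as a planar graph with $v$ degree-$4$ internal crossings and $2n$ degree-$1$ boundary vertices; then $E=2v+n$, and Euler's formula gives $F=v-n+2$ faces in total. Since each $1$-valent boundary vertex lies on the boundary of exactly one face (the unbounded face $F_{\text{out}}$), all other $I=v-n+1$ faces are internal in the sense of the lemma. A key local observation is that at a $1$-valent vertex the two sides of its unique incident edge are connected around the tip, so globally they lie in the same face $F_{\text{out}}$; hence each of the $2n$ boundary edges contributes $2$ to $k_{F_{\text{out}}}$, giving $k_{F_{\text{out}}}\ge 4n$.

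Assume for contradiction that every internal face of $D$ satisfies $k_F\ge 4$. The identity $\sum_F k_F = 2E$ then yields
\[
4(v-n+1)\;\le\;\sum_{F\text{ internal}} k_F \;=\; 2E-k_{F_{\text{out}}}\;\le\; 4v-2n,
\]
which simplifies to $n\ge 2$, immediately disposing of $n=1$. For $n=2,3$ the inequality is tight with slack at most $2n-4$, so sharper information is needed. I would extract it by tracing the outer walk $\partial F_{\text{out}}$: it consists of $2n$ ``bounce-backs'' along boundary edges, separated by paths $P_1,\dots,P_{2n}$ of non-boundary edges joining the $4$-valent neighbor $v_i$ of $b_i$ to the $4$-valent neighbor $v_{i+1}$ of $b_{i+1}$ in cyclic order. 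Thus $k_{F_{\text{out}}}=4n+\sum_i|P_i|$, forcing $\sum_i|P_i|\le 2n-4$, and the number $J$ of indices with $v_i\ne v_{i+1}$ is even by cyclic parity, with every such index contributing at least $1$ to $\sum|P_i|$.

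The main obstacle is the case analysis for $n=2,3$ combining this boundary-walk structure with the degree-$4$ constraint at each crossing. For $n=2$ the bound forces $\sum|P_i|=0$, so all four boundary edges meet one crossing $v^\ast$ of degree $4$; then $v^\ast$ has no edge to any other crossing, contradicting connectedness of $D$ (since $v\ge 2$ is needed for $I\ge 1$). For $n=3$ one has $\sum|P_i|\le 2$ and $J\in\{0,2\}$: if $J=0$ then six boundary edges meet a single crossing of degree~$4$, impossible; if $J=2$ the six boundary vertices split into two cyclic arcs of sizes $g_1,g_2$ meeting crossings $v^\ast,v^{\ast\ast}$, with the two jumps realized by single edges between them, so $g_1+2\le 4$ and $g_2+2\le 4$ force $g_1+g_2\le 4<6$, a contradiction. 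Hence some internal face has $k\in\{1,2,3\}$.
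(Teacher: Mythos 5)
Your overall strategy is sound and is a legitimate variant of the paper's argument: both proofs rest on the same Euler-formula bookkeeping ($E=2v+n$, $F=v-n+2$ in your normalization), and both derive a contradiction from the assumption that every internal face has at least four sides. Where you differ is in how the external face is handled. The paper charges the outer face with the $2n$ boundary edges plus half of at least $n$ further edges ``shared with internal faces due to connectedness,'' and concludes directly that the number of internal $3$-faces is at least $4-n$; that shared-edge count is asserted rather than proved. You instead bound the outer face's contribution from above by decomposing its boundary walk into bounce-backs and connecting paths, obtaining $\sum_i\lvert P_i\rvert\le 2n-4$, and then extract the contradiction from the degree-$4$ constraint at the crossings adjacent to the boundary. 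Your route is more explicit at exactly the point where the paper is hand-waving, at the cost of a short case analysis for $n=2,3$.

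There are two places where your write-up needs repair. First, the assertion that $J$ is ``even by cyclic parity'' is false in general (a cyclic sequence with three distinct values in three runs has $J=3$); what is true, and all you need, is that $J=1$ is impossible for any cyclic sequence, so $J\le 2$ already forces $J\in\{0,2\}$. Second, and more substantively, in the $n=3$, $J=2$ case you write ``the two jumps realized by single edges between them, so $g_1+2\le 4$.'' This tacitly assumes the two length-one jump paths are \emph{distinct} edges incident to $v^{\ast}$. They could instead be the two sides of a single bridge $v^{\ast}v^{\ast\ast}$, in which case that edge contributes only $1$ to $\deg(v^{\ast})$ and you only get $g_1\le 3$, $g_2\le 3$, which is compatible with $g_1+g_2=6$. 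The configuration is not vacuous: two crossings joined by one edge, each carrying three boundary legs, satisfies all your constraints so far. To close this subcase you must invoke the hypothesis that an internal face exists, which forces $v\ge n=3$ (since the number of internal faces is $v-n+1$); but in the single-bridge configuration both crossings have all four slots occupied, so connectedness forces $v=2$, a contradiction. You already use exactly this kind of argument ($v\ge 2$) in your $n=2$ case, so the fix is one sentence, but as written the $n=3$ analysis does not cover all configurations.
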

    \begin{proof}
        An $ n $-box has $ 2n $ 1-valent vertices, so counting degrees of vertices twice gives $ 2e = 4(v-2n) + 2n $. Hence, $ e = 2v-3n $. Euler's formula $ v-e+f=2 $ gives $ f = v-3n+2 $. Note that the external face contributes $ 2n $ to the number of edges via 1-valent vertices. Since there exists internal faces, $ f \ge 2 $. Thus the number of 4-valent vertices is at least $ n $, so the external face also contributes $ \frac{n}{2} $ to common edges shared with internal faces due to the connectedness of $ D $. If $ D $ has no internal 1-faces or 2-faces, each internal face contributes at least 2 to the number of edges except 3-faces. Let $ k $ be the number of $ 3 $-faces. Counting the number of edges gives $ e - 2n - \frac{3k}{2}  \ge 2(f - 1 - k) + \frac{n}{2} $, which results in $ k \ge 4 - n \ge 1 $ for $ n = 1, 2, 3 $. Hence, there must be an internal 3-face in $ D $. 
    \end{proof}

    \begin{definition}
        Let $ D $ be a non-empty connected 0-box in $ \mathcal{P}(L_2) $. We define 6 moves to evaluate $ D $. 
        \begin{enumerate}[label={\textbf{Move} $\mathbf{\arabic*}$:},leftmargin=12ex]
            \setcounter{enumi}{-1}
            \item Evaluate a $ 0 $-face using Relation \ref{circle}.
            \item Evaluate a $ 1 $-faces using Relation \ref{exr1}.
            \item Evaluate a $ 2 $-faces using Multiplication Relation \refeq{multiplication} or Convolution Relation \refeq{convolution}.
            \item Evaluate a $ 3 $-face by using Exchange Relation \refeq{exr2} to reduce it to a $ 2 $-face and evaluate the remaining $ 2 $-face immediately. 
            \item Apply Exchange Relation \ref{exr2} on an $ E_1 $-vector to transform a tangle to another. 
            \item Apply the resolution of identity to transform \begin{tikzpicture}[inner sep=0pt,minimum size=6mm,baseline={([yshift=-0.5ex]current bounding box.center)}]
                \draw[] (0,0) -- (0,0.5);
                \draw[] (0.3,0) -- (0.3,0.5);
            \end{tikzpicture}
            to
            \begin{tikzpicture}[inner sep=0pt,minimum size=6mm,baseline={([yshift=-0.5ex]current bounding box.center)}]
                \draw[] (0,0) -- (0.3,0.5);
                \draw[] (0.3,0) -- (0,0.5);
            \end{tikzpicture}.
        \end{enumerate}
        An evaluation path is a concatenation of finitely many moves. A canonical evaluation path is an evaluation path that does not contain Move--4 or Move--5.
    \end{definition}

    There are 3 ways to reduce a 3-face using Move-3. We denote them by Move-$3.a$, Move-$3.b$, and Move-$3.c$ respectively.

    \begin{theorem}\label{thm:caeva}
        A connected 0-box $ D $ can be evaluated to a scalar $ Z(D) $ by a canonical evaluation path.
    \end{theorem}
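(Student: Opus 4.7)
The plan is to proceed by strong induction on the number $v$ of $4$-valent vertices (i.e.\ internal crossings) of the connected $0$-box $D$. The inductive hypothesis asserts that every connected $0$-box with fewer than $v$ crossings can be evaluated by a canonical evaluation path, i.e.\ one using only Moves $0$--$3$.

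For the base case $v=0$, the diagram $D$ contains no labelled $2$-boxes, so its underlying planar tangle is a disjoint union of closed loops. Each such loop is a $0$-face and is evaluated to $\delta$ by Move--$0$, yielding a scalar $\delta^{m}$ where $m$ is the number of loops.

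For the inductive step, suppose $v\ge 1$. We perform a case analysis on the face structure of $D$. If $D$ has a $0$-face, apply Move--$0$; if $D$ has a $1$-face, apply Move--$1$; if $D$ has a $2$-face, apply Move--$2$. In each of these three cases the chosen move removes at least one $4$-valent vertex (Move--$2$ collapses two crossings via multiplication or convolution into a single crossing, and Move--$1$ removes one crossing), so the resulting diagram(s) have strictly fewer crossings and the induction hypothesis applies to each connected component. If none of these small faces exist, then Lemma~\ref{lem1} guarantees that $D$ carries at least eight $3$-faces; choose any one and apply Move--$3$. By the definition of Move--$3$, we first use Exchange Relation~\eqref{exr2} on a pair of crossings bounding the chosen $3$-face, producing a linear combination of diagrams each of which now contains a $2$-face in place of the original $3$-face, and then immediately evaluate that $2$-face via Move--$2$. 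The composite effect replaces $3$ crossings by at most $1$, so every term in the resulting linear combination has strictly fewer than $v$ crossings, and we may again invoke the induction hypothesis.

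Strong induction then produces a scalar $Z(D)$ obtained as a finite linear combination of products of the parameters $\delta, c_i, d_i, N_{kj}^i, a_{st}^{ij}, b_{\ell m}^{ij}$. The entire procedure uses only Moves~$0$--$3$ and is therefore canonical by definition.

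The main obstacle I anticipate is the verification that Move--$3$ strictly decreases the vertex count when applied to a $3$-face, since the exchange relation itself preserves the number of crossings locally; the key observation is that the $3$-face has $3$ crossings on its boundary and the exchange relation on two of them creates a $2$-face that is forced to be evaluated in the same step, so the accounting works. One should also note that during Move--$3$ the diagram may split into disconnected components (e.g.\ when the $2$-face evaluation introduces a closed loop that is later removed by Move--$0$); the induction hypothesis must therefore be applied componentwise, which is harmless since the theorem statement passes to each connected component independently. Finally, well-definedness of the scalar $Z(D)$ (i.e.\ independence of the chosen canonical path) is \emph{not} claimed here and is deferred to the consistency theorems cited in the introduction.
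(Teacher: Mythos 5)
Your proof is correct and takes essentially the same approach as the paper's: greedily apply Moves $0$--$3$, invoking Lemma \ref{lem1} to guarantee a $3$-face when no smaller face is available, and terminate because a complexity measure strictly decreases with each move. The only cosmetic differences are that the paper inducts on the number of faces rather than the number of crossings (equivalent for $4$-regular planar diagrams by Euler's formula), and your two small miscounts (Move--$0$ removes no vertex, which is harmless since a connected component containing a $0$-face is already the base case, and Move--$3$ drops the crossing count by exactly one, not two) do not affect the argument.
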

    \begin{proof}
        If $ D $ is the empty tangle, then $ Z(D) $ is 1.
        If $ D $ has a $ k $-face for $ k\le 2 $, then we can reduce the $ k $-face using Move--$ k $. If this is not the case, then $ D $ has a $ 3 $-face by Lemma \ref{lem1} which can be reduced by certain Move--3. This process reduces a face of $ D $ in each move. Thus $ D $ will be evaluated to a scalar multiple of the empty tangle after finite number of moves. Take $ Z(D) $ to be this scalar and the concatenation of these moves forms a canonical evaluation path. 
    \end{proof}

    We list the sets $ S_n $ of $ n $-boxes ($ 1 \le n \le 3 $) that do not have internal faces.

    \[S_1 =  \left\{\vcenter{\hbox{
        \begin{tikzpicture}
            \draw[] (0,0) -- (0,0.5);
        \end{tikzpicture}
        }}
        \right\}.\]

    \[S_2 = \left\{\vcenter{\hbox{
        \begin{tikzpicture}
        \draw[] (0,0) -- (0,0.5);
        \draw[] (0.3,0) -- (0.3,0.5);
    \end{tikzpicture}, 
    \begin{tikzpicture}
        \draw (0,0.5) .. controls (0,0.25) and (0.3,0.25) .. (0.3,0.5);
        \draw (0,0) .. controls (0,0.25) and (0.3,0.25) .. (0.3,0);
    \end{tikzpicture}, \begin{tikzpicture}
        \draw[] (0,0) -- (0.3,0.5);
        \draw[] (0.3,0) -- (0,0.5);
    \end{tikzpicture}
    }}
    \right\}.\]

    \[S_3 = \left\{\vcenter{\hbox{
        \begin{tikzpicture}
        \draw[] (0,0) -- (0,0.5);
        \draw[] (0.3,0) -- (0.3,0.5);
        \draw[] (0.6,0) -- (0.6,0.5);
    \end{tikzpicture},
    \begin{tikzpicture}
        \draw (0,0.5) .. controls (0,0.25) and (0.3,0.25) .. (0.3,0.5);
        \draw (0,0) .. controls (0,0.25) and (0.3,0.25) .. (0.3,0);
        \draw[] (0.5,0) -- (0.5,0.5);
    \end{tikzpicture},
    \begin{tikzpicture}
        \draw (0,0.5) .. controls (0,0.25) and (0.3,0.25) .. (0.3,0.5);
        \draw (0,0) .. controls (0,0.25) and (0.3,0.25) .. (0.3,0);
        \draw[] (-0.2,0) -- (-0.2,0.5);
    \end{tikzpicture},
    \begin{tikzpicture}
        \draw (0,0.5) .. controls (0,0.25) and (0.3,0.25) .. (0.3,0.5);
        \draw (0.3,0) .. controls (0.3,0.25) and (0.6,0.25) .. (0.6,0);
        \draw[] (0,0) -- (0.6,0.5);
    \end{tikzpicture}, 
    \begin{tikzpicture}
        \draw (0.3,0.5) .. controls (0.3,0.25) and (0.6,0.25) .. (0.6,0.5);
        \draw (0,0) .. controls (0,0.25) and (0.3,0.25) .. (0.3,0);
        \draw[] (0,0.5) -- (0.6,0);
    \end{tikzpicture},
    \begin{tikzpicture}
        \draw[] (0,0) -- (0.3,0.5);
        \draw[] (0.3,0) -- (0,0.5);
        \draw[] (0.5,0) -- (0.5,0.5);
    \end{tikzpicture},
    \begin{tikzpicture}
        \draw[] (0,0) -- (0.3,0.5);
        \draw[] (0.3,0) -- (0,0.5);
        \draw[] (-0.2,0) -- (-0.2,0.5);
    \end{tikzpicture},
    \begin{tikzpicture}
        \draw (0,0.5) .. controls (0,0.25) and (0.6,0.25) .. (0.6,0.5);
        \draw (0,0) .. controls (0,0.25) and (0.3,0.25) .. (0.3,0);
        \draw[] (0.3,0.5) -- (0.6,0);
    \end{tikzpicture},
    \begin{tikzpicture}
        \draw (0,0.5) .. controls (0,0.25) and (0.3,0.25) .. (0.3,0.5);
        \draw (0,0) .. controls (0,0.25) and (0.6,0.25) .. (0.6,0);
        \draw[] (0.3,0) -- (0.6,0.5);
    \end{tikzpicture},
    \begin{tikzpicture}
        \draw (0,0.5) .. controls (0,0.25) and (0.6,0.25) .. (0.6,0.5);
        \draw (0.3,0) .. controls (0.3,0.25) and (0.6,0.25) .. (0.6,0);
        \draw[] (0.3,0.5) -- (0,0);
    \end{tikzpicture},
    \begin{tikzpicture}
        \draw (0.3,0.5) .. controls (0.3,0.25) and (0.6,0.25) .. (0.6,0.5);
        \draw (0,0) .. controls (0,0.25) and (0.6,0.25) .. (0.6,0);
        \draw[] (0.3,0) -- (0,0.5);
    \end{tikzpicture},
    \begin{tikzpicture}
        \draw[] (0,0) -- (0.3,0.5);
        \draw[] (0.3,0) -- (0.6,0.5);
        \draw[] (0,0.5) -- (0.6,0);
    \end{tikzpicture},
    \begin{tikzpicture}
        \draw (0,0.5) .. controls (0,0.25) and (0.6,0.25) .. (0.6,0.5);
        \draw (0,0) .. controls (0,0.25) and (0.6,0.25) .. (0.6,0);
        \draw[] (0.3,0) -- (0.3,0.5);
    \end{tikzpicture},
    \begin{tikzpicture}
        \draw[] (0,0.5) -- (0.3,0);
        \draw[] (0.3,0.5) -- (0.6,0);
        \draw[] (0,0) -- (0.6,0.5);
    \end{tikzpicture}
    }}
    \right\}.\]

    \begin{theorem}\label{thm:evathm}
        Different canonical evaluation paths on any connected 0-box give the same scalar if different canonical evaluation paths on the tangles which are the inner product of the following $ n $-boxes and elements of $ S_n $ for $ n = 1, 2, 3 $ give the same scalar. 
        \begin{figure}[ht]
            \centering
            \subfloat[][]{
                \begin{tikzpicture}
                \draw (0,0)  .. controls (1,1) and (1,-1) ..  (0,0);
                \draw (0,0)  .. controls (-1,-1) and (-1,1) ..  (0,0);
                \end{tikzpicture}
                \label{eva_a}
            }
            \quad
            \subfloat[][]{
                \begin{tikzpicture}
                    \draw (0,0)  .. controls (1,1) and (1,-1) ..  (0,0);
                    \draw (0,0) to[bend left=45] (-1,0)--(-1.2,0.2) ;
                    \draw (0,0) to[bend right=45] (-1,0)--(-1.2,-0.2);
                    \draw (0,0) to[out=45, in=-45, looseness=4] (0,0);
                \end{tikzpicture}
                \label{eva_b}
            }
            \quad
            \subfloat[][]{
                \begin{tikzpicture}
                    \draw (-1,0) -- (1,0);
                    \draw (0,0) circle (0.5);
                \end{tikzpicture}
                \label{eva_c}
            }
            \quad
            \subfloat[][]{
                \begin{tikzpicture}
                    \draw (0,0) to[bend left=45] (-1,0)--(-1.2,0.2) ;
                    \draw (0,0) to[bend right=45] (-1,0)--(-1.2,-0.2);
                    \draw (0,0) to[bend left=45] (1,0)--(1.2,0.2) ;
                    \draw (0,0) to[bend right=45] (1,0)--(1.2,-0.2);
                \end{tikzpicture}
                \label{eva_d}
            }
            \quad
            \subfloat[][]{
                \begin{tikzpicture}
                    \draw (0,1) -- (1.2,0);
                    \draw (0.3,1) -- (0.3,0);
                    \draw (1.2,1) -- (0,0);
                \end{tikzpicture}
                \label{eva_e}
            }
            \quad 
            \subfloat[][]{
                \begin{tikzpicture}
                    \draw (0,1) -- (1.5,0);
                    \draw (0.5,0) -- (0.5,0.9) ..controls (0.5,1.1) and (1.2,0.8)..(1,0.666) -- (0,0);
                \end{tikzpicture}
                \label{eva_f}
            }

            \caption{Evaluation tangles.}
            \label{fig:cont}
        \end{figure}
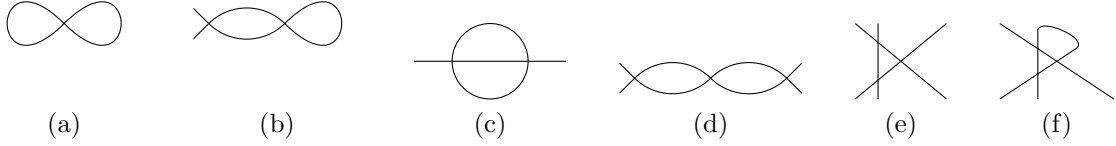
    \end{theorem}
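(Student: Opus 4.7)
The plan is to use a confluence argument in the spirit of the Diamond Lemma / Newman's Lemma. Every canonical move strictly decreases a natural complexity on a connected 0-box (for instance the pair consisting of the number of internal faces and the number of 4-valent vertices, ordered lexicographically), so the rewriting system of canonical moves terminates. Consequently, the statement that every connected 0-box has a well-defined scalar $Z(D)$ reduces to local confluence: whenever two distinct canonical moves are applicable to the same configuration, the two resulting tangles admit further canonical evaluations leading to the same scalar.

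The first step is to enumerate the critical pairs of simultaneously applicable moves. If the two moves act on disjoint subregions of the tangle, they commute trivially. The nontrivial critical pairs fall into three families: (i) two small ($k$-)faces with $k \le 3$ that share an edge or a vertex, so that reducing one alters the other; (ii) the three alternative reductions Move-$3.a$, Move-$3.b$, Move-$3.c$ available on a single 3-face; and (iii) two 3-faces whose reductions interfere, including the case in which reducing a 2-face produces a new 3-face. By Lemma \ref{lem2}, in each such case the interaction is confined to a sub-tangle whose boundary lives in $\mathcal{P}_{n,\pm}$ for some $n \le 3$.

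Next I would localize each divergence. Cut $D$ along a disc that isolates the interacting sub-tangle $T \in \mathcal{P}_{n,\pm}$ from its complement $T' \in \mathcal{P}_{n,\mp}$, so that $Z(D)$ is the pairing $\langle T, T' \rangle$ up to normalization. Two reduction choices on $T$ produce elements $x_1, x_2 \in \mathcal{P}_{n,\pm}$, and the two global canonical evaluations of $D$ agree precisely when $\langle x_1 - x_2, T' \rangle = 0$. By Proposition \ref{OBP3}, together with the analogous spanning statements for $n = 1, 2$ (which follow from the same Landau-type argument as in the proof of Proposition \ref{OBP3}, combined with Lemma \ref{lem2}), the space $\mathcal{P}_{n,\pm}$ is spanned by $S_n$. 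Expanding $T'$ in this spanning set reduces the desired identity to $\langle x_1 - x_2, s \rangle = 0$ for every $s \in S_n$. These inner products are precisely the 0-boxes catalogued in Figure \ref{fig:cont}, whose canonical evaluations are consistent by hypothesis, so Newman's Lemma assembles the local confluence into global consistency.

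The main obstacle is the combinatorial enumeration of the critical pairs in step two and matching each to the six displayed test shapes. The three variants of Move-3 interact in many ways with neighboring $k$-faces and with one another, and one must argue that after isolating the interacting sub-tangle the resulting inner products always fall into the list in Figure \ref{fig:cont}, with no additional shape appearing. A careful bookkeeping of face adjacencies, aided by Lemma \ref{lem2} to rule out degenerate internal-face configurations at low $n$, should reduce the enumeration to a finite case check; completing it cleanly is the technical heart of the proof.
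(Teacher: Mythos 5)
Your overall skeleton (termination of the canonical rewriting system plus local confluence of critical pairs, assembled by Newman's Lemma and induction on the number of faces) matches the paper's strategy, but your resolution of the critical pairs has a circularity. You propose to cut out the interacting sub-tangle $T$, write $Z(D)=\langle T,T'\rangle$, and then \emph{expand the complement $T'$ in the spanning set $S_n$} via Proposition \ref{OBP3}. That proposition (and the Landau spanning theorem behind it) is a statement about the quotient subfactor planar algebra $\mathcal{P}_{3,+}$, whose very existence presupposes that the partition function $Z$ is well defined on the universal planar algebra $\mathcal{P}(L_2)$ --- which is exactly what this theorem is establishing. In the universal planar algebra, ``expanding $T'$ over $S_n$'' means reducing $T'$ by a sequence of moves, and the resulting coefficients are a priori path-dependent; asserting that $\langle x_1-x_2,T'\rangle=0$ follows from $\langle x_1-x_2,s\rangle=0$ for $s\in S_n$ silently assumes the consistency of reductions of $T'$ inside $D$. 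The paper avoids this by a pivot argument: when the complement of the interacting configuration has an internal face $F_c$ (guaranteed by Lemma \ref{lem2} whenever the complement is not already in $S_n$), the move on $F_c$ commutes with each of the two competing moves because they act on non-adjacent parts, and the induction hypothesis on the smaller diagrams closes the diamond. The hypothesis of the theorem is invoked only when the complement has \emph{no} internal faces, in which case the whole $0$-box literally is one of the listed inner products and no expansion is needed.

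A second, smaller gap is that you defer precisely the part of the argument that the paper has to work for: the adjacent $3$-face/$2$-face and $3$-face/$3$-face configurations (the paper's Figures \ref{eva_g}, \ref{eva2_a}, \ref{eva2_b}, \ref{eva2_c}) are not in the hypothesis list and cannot be handled by non-adjacency alone. The paper resolves them by factoring Move--3 as Move--4 followed by Move--2, choosing the crossings so that the interfering reductions become two non-adjacent $2$-faces, and then quoting the already-established case of Figure \ref{eva_d}. Without both the corrected localization and this explicit case analysis, the proposal does not yet constitute a proof.
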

    \begin{proof}
        Let $ D $ be a connected 0-box.
        We aim to prove that the scalar $ Z(D) $ is independent of the canonical evaluation path, specifically, the starting move on starting face, by induction on the number of faces of $ D $. 

        For the base case, both the empty tangle and the 0-face tangle have a unique canonical evaluation path. We now consider tangles with at least 2 faces. It is important to note that any two moves on non-adjacent parts of a tangle are commutative. Thus the order in which these moves are applied does not affect the resulting tangle.

        Suppose $ D_n $ and $ D_k $ ($ 0\le n, k\le 3 $) are obtained by reducing an $ n $-face $ F_n $ and a $ k $-face $ F_k $ in $ D $ respectively, and that $ F_n $ and $ F_k $ are not adjacent. By the induction hypothesis, $ Z(D_n) $ is independent of the starting face, so we can evaluate $ F_k $ in $ D_n $ as the starting move to obtain a tangle $ D_{n,k} $. Similarly, we can evaluate $ F_n $ in $ D_k $ as the first move and obtain $ D_{n,k} $ as well. By concatenating a canonical evaluation path of $ D_{n,k} $ with above two evaluation paths respectively, we conclude that choosing either Move--$ n $ or Move--$ k $ as the starting move will give the same scalar for $ D $.

        Next, we prove that 3 distinct Move--3 are the same transform under our assumption.
        The complement of $ F_3 $ in $ D $ is also a 3-box $ B_3 $. If $ B_3 $ has no internal faces, then this case reduces to our assumption. If $ B_3 $ has internal faces, then there must be an internal $ k $-face $ F_k $ for $ k \le 3 $ by Lemma \ref{lem2}.  Note that Move-$3.a$ or Move-$ 3.b $ ($ a,b = 1, 2, 3 $) on $ F_3 $ and Move-$ k $ on $ F_k $ are commutative since they are not adjacent, so Move-$3.a$ or Move-$ 3.b $ on $ F_3 $ are commutative as starting move of $ D $, which means choosing whichever one as the starting move to evaluate $ F_3 $ will give the same scalar for $ D $.

        Now we consider the case that $ F_n $ and $ F_k $ are adjacent and $ n \ge k $. Then $ k $ cannot be 0 since a $ 0 $-face is closed. If $ n\le 2 $, then possible pairs $ (n,k) $ can only be $ (1,1), (2,1)$, and $ (2,2) $. The adjacency of $ F_n $ and $ F_k $ is illustrated in Figure \ref{eva_a}, \ref{eva_b},\ref{eva_c}, \ref{eva_d}. Figure \ref{eva_a} reduces to our assumption. For Figure \ref{eva_b}, \ref{eva_c}, \ref{eva_d}, $ D $ can be viewed as the inner product of the adjacency part $ A $ and the complement part $ B $. If $ B $ has no internal faces, then this case also reduces to our assumption, too. If $ B $ has internal faces, then by Lemma \ref{lem2}, there must be a 1-face, or 2-face, or 3-face, which we denote as $ F_c $. According to above argument, Choosing $ F_n $ or $ F_c $ as the starting face gives the same scalar. A similar result holds for $ F_k $ and $ F_c $. Therefore, choosing either $ F_n $ or $ F_k $ as the starting face will give the same scalar.

        For $ n = 3 $, the possible pairs $ (n,k) $ can be $ (3,1), (3,2), (3,3) $. Different adjacency configurations are shown in Figure \ref{eva_f} and Figure \ref{eva_g}, \ref{eva2_a}, \ref{eva2_b}, \ref{eva2_c}. The same argument applies to Figure \ref{eva_f}.

        \begin{figure}[ht]
            \centering

            \subfloat[][]{
                \begin{tikzpicture}
                    \draw (0,1) -- (1.2,0);
                    \draw (0.3,1) -- (0.3,0);
                    \draw (1.2,1) -- (0,0);
                    \draw[] (0,1) to[out=140, in=90, looseness = 4] (0.3,1);
                \end{tikzpicture}
                \label{eva_g}
            }
            \quad
            \subfloat[][]{
                \begin{tikzpicture}
                    \draw (1.5,1) -- (0,0);
                    \draw (0.5,0) -- (0.5,0.8) .. controls (0.5,1.2).. (0,1.3);
                    \draw (1.5,0) -- (0.3,0.8) .. controls (0.2,0.866) .. (0.15,1.4);
                \end{tikzpicture}
                \label{eva2_a}
            }
            \quad
            \subfloat[][]{
                \begin{tikzpicture}
                    \draw (0,1) -- (1.5,0);
                    \draw (0.5,1) -- (0.5,0);
                    \draw (1,1) -- (1,0);
                    \draw (1.5,1) -- (0,0);
                \end{tikzpicture}
                \label{eva2_b}
            }
            \quad
            \subfloat[][]{
                \begin{tikzpicture}
                    \draw (0,0) .. controls (0,1) and (2,1) .. (2,0);
                    \draw (0,1) .. controls (0,0) and (2,0) .. (2,1);
                    \draw (1,0) -- (1,1);
                    \end{tikzpicture}
                    \label{eva2_c}
            }
            \caption{}
            \label{<label>}
        \end{figure}

        We proceed to prove that the commutativity of moves on Figure \ref{eva_g}, \ref{eva2_a}, \ref{eva2_b}, \ref{eva2_c} can be deduced. Now we may use Move--3 arbitrarily. Note that Move--3 is the concatenation of Move--4 and Move--2. In Figure \ref{eva2_a}, we choose two crossings in $ F_3 $ that are not in $ F_2 $ to apply Move--4, resulting in the configuration \begin{tikzpicture}[inner sep=0pt,minimum size=6mm,baseline={([yshift=-0.5ex]current bounding box.center)}]
            \draw (0,0) to[bend left=45] (-1,0)--(-1.2,0.2) ;
            \draw (0,0) to[bend right=45] (-1,0)--(-1.2,-0.2);
            \draw (0,0) to[bend left=45] (1,0)--(1.2,0.2) ;
            \draw (0,0) to[bend right=45] (1,0)--(1.2,-0.2);
            \draw (-1,-0.2) -- (-1.2,0);
        \end{tikzpicture} and \begin{tikzpicture}[inner sep=0pt,minimum size=6mm,baseline={([yshift=-0.5ex]current bounding box.center)}]
            \draw (0,0) to[bend left=45] (-1,0)--(-1.2,0.2) ;
            \draw (0,0) to[bend right=45] (-1,0)--(-1.2,-0.2);
            \draw (0,0) to[bend left=45] (1,0)--(1.2,0.2) ;
            \draw (0,0) to[bend right=45] (1,0)--(1.2,-0.2);
            \draw (-1,0.2) -- (-1.2,0);
        \end{tikzpicture}. We then apply Move--2 immediately to reduce $ F_2^{'} $ to complete Move--3, and we apply Move--2 to reduce $ F_2 $ by induction. By what we have proved on Figure \ref{eva_d}, we can reduce $ F_2 $ first and then $ F_2^{'} $. Furthermore, since Move--2 on $ F_2 $ and Move--4 on $ F_3 $ act on non-adjacent parts, they are commutative. Thus evaluating either $ F_3 $ or $ F_2 $ as the starting move gives the same scalar.  In Figure \ref{eva2_b}, we choose two pairs of non-adjacent crossings to apply exchange relations and obtain \begin{tikzpicture}[inner sep=0pt,minimum size=6mm,baseline={([yshift=-0.5ex]current bounding box.center)}]
            \draw (0,0) to[bend left=45] (-1,0)--(-1.2,0.2) ;
            \draw (0,0) to[bend right=45] (-1,0)--(-1.2,-0.2);
            \draw (0,0) to[bend left=45] (1,0)--(1.2,0.2) ;
            \draw (0,0) to[bend right=45] (1,0)--(1.2,-0.2);
            \draw (1,-0.2) -- (1.2,0);
            \draw (-1,-0.2) -- (-1.2,0);
        \end{tikzpicture}, \begin{tikzpicture}[inner sep=0pt,minimum size=6mm,baseline={([yshift=-0.5ex]current bounding box.center)}]
            \draw (0,0) to[bend left=45] (-1,0)--(-1.2,0.2) ;
            \draw (0,0) to[bend right=45] (-1,0)--(-1.2,-0.2);
            \draw (0,0) to[bend left=45] (1,0)--(1.2,0.2) ;
            \draw (0,0) to[bend right=45] (1,0)--(1.2,-0.2);
            \draw (1,0.2) -- (1.2,0);
            \draw (-1,-0.2) -- (-1.2,0);
        \end{tikzpicture}, \begin{tikzpicture}[inner sep=0pt,minimum size=6mm,baseline={([yshift=-0.5ex]current bounding box.center)}]
            \draw (0,0) to[bend left=45] (-1,0)--(-1.2,0.2) ;
            \draw (0,0) to[bend right=45] (-1,0)--(-1.2,-0.2);
            \draw (0,0) to[bend left=45] (1,0)--(1.2,0.2) ;
            \draw (0,0) to[bend right=45] (1,0)--(1.2,-0.2);
            \draw (1,-0.2) -- (1.2,0);
            \draw (-1,0.2) -- (-1.2,0);
        \end{tikzpicture}, and \begin{tikzpicture}[inner sep=0pt,minimum size=6mm,baseline={([yshift=-0.5ex]current bounding box.center)}]
            \draw (0,0) to[bend left=45] (-1,0)--(-1.2,0.2) ;
            \draw (0,0) to[bend right=45] (-1,0)--(-1.2,-0.2);
            \draw (0,0) to[bend left=45] (1,0)--(1.2,0.2) ;
            \draw (0,0) to[bend right=45] (1,0)--(1.2,-0.2);
            \draw (1,0.2) -- (1.2,0);
            \draw (-1,0.2) -- (-1.2,0);
        \end{tikzpicture}. Thus the consistency of evaluating $ F_3 $ or $ F_3^{'} $ first follows from the conclusion on Figure \ref{eva_d}. In Figure \ref{eva2_c}, we choose two common crossings of two $ 3 $-faces $ F_3 $ and $ F_3^{'} $ to apply exchange relation and obtain \begin{tikzpicture}[scale=0.5]
            \draw (0,0) .. controls (0,1) and (2,1) .. (2,0);
            \draw (3,1) .. controls (3,0) and (5,0) .. (5,1);
            \draw (0,1) .. controls (0,0) and (1.5,0) .. (2.5,0.5) ;
            \draw (5,0) .. controls (5,1) and (3.5,1) .. (2.5,0.5);
            \end{tikzpicture} and \begin{tikzpicture}[scale=0.5]
                \draw (0,1) .. controls (0,0) and (2,0) .. (2,1);
                \draw (3,0) .. controls (3,1) and (5,1) .. (5,0);
                \draw (0,0) .. controls (0,1) and (1.5,1) .. (2.5,0.5) ;
                \draw (5,1) .. controls (5,0) and (3.5,0) .. (2.5,0.5);
            \end{tikzpicture}. Evaluating $ F_3 $ first or $ F_3^{'} $ first ultimately depends on which $ 2 $-face to evaluate first. Since the two $ 2 $-faces are not adjacent, evaluating them in different orders gives the same scalar.
    \end{proof}

    \begin{definition}
        The partition function $ Z $ on a 0-box $ D $ is defined as the product of the values obtained from canonical evaluation paths for connected components of $ D $.
    \end{definition}

    $ Z $ is invariant under Move--$ k $ for $ k = 0,1,2,3 $ since each move can be regarded as a starting move of a canonical evaluation path. This is a direct extension from a connected 0-box to a 0-box having multiple connected components. However, the resolution of identity may join two connected components.

    \begin{theorem}\label{thm:conthm}
        $ Z $ is invariant on any 0-box under Move--4 and Move--5 if $ Z $ is invariant on the inner product of 
        \begin{enumerate}[label=(\arabic*)]
            \item \begin{tikzpicture}[baseline=0.65ex]
                \draw[] (0,0) -- (0.3,0.5);
                \draw[] (0.3,0) -- (0.6,0.5);
                \draw[] (0,0.5) -- (0.6,0);
            \end{tikzpicture} and elements of $ S_3 $,
            \item \begin{tikzpicture}[baseline=0.65ex]
                \draw[] (0,0) -- (0,0.5);
                \draw[] (0.3,0) -- (0.3,0.5);
            \end{tikzpicture} and elements of $ S_2 $,
        \end{enumerate}
        under Move--4 and Move--5.
    \end{theorem}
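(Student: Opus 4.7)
The plan is to localize each of Move--4 and Move--5, express the global $0$-box as an inner product of the local piece (on which the move acts) with a complementary $n$-box, and then reduce the complementary $n$-box to a linear combination of elements of $S_n$ using only Moves 0--3, to which $Z$ is already known to be invariant by Theorem \ref{thm:evathm}.

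First I would treat Move--4. Suppose $D$ is a $0$-box and Move--4 is applied to an $E_1$-vector $x$ that appears as a subtangle of $D$. Cutting along the boundary of this subtangle presents $D$ as an inner product $D=\langle x,y\rangle$, where $y$ is a $3$-box formed by the rest of $D$. Using Moves 0--3, I reduce all internal faces of $y$ (by Lemma \ref{lem1} and Lemma \ref{lem2} such reductions terminate), thereby writing $y=\sum_i \lambda_i s_i$ with $s_i\in S_3$ and $\lambda_i\in\mathbb{C}$. Inserting this expansion into $D$, and using that Moves 0--3 preserve $Z$ on the containing $0$-box $\langle x,y\rangle$, gives
\begin{equation*}
Z(D)=Z(\langle x,y\rangle)=\sum_i \lambda_i\, Z(\langle x,s_i\rangle).
\end{equation*}
If $D'$ denotes the $0$-box obtained by applying Move--4 to $x$, the same expansion of $y$ applied inside $D'$ (the reduction takes place entirely inside $y$, which is not adjacent to the crossings being modified by Move--4) yields $Z(D')=\sum_i \lambda_i\, Z(\langle x',s_i\rangle)$, where $x'$ is the image of $x$ under Move--4. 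The hypothesis asserts exactly that $Z(\langle x,s_i\rangle)=Z(\langle x',s_i\rangle)$ for every $s_i\in S_3$, so the two sums agree and $Z(D)=Z(D')$.

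Next I would run the same argument for Move--5, but with $2$-boxes. An application of Move--5 replaces a pair of parallel strands inside $D$ by a crossing pair of strands; presenting $D$ as $\langle u,w\rangle$ where $u$ is the two-strand local piece and $w$ is the complementary $2$-box, the same scheme writes $w=\sum_j \mu_j t_j$ with $t_j\in S_2$ via Moves 0--3, and the hypothesis handles each term $\langle u,t_j\rangle$. By linearity $Z$ is preserved.

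The main obstacle is justifying the expansion step cleanly: I need to know that reducing the internal faces of the complementary $n$-box $y$ (or $w$) through Moves 0--3 both terminates and is consistent, and that these reductions commute with the $E_1$-subtangle $x$ (or the two-strand subtangle $u$) since those reductions take place in a non-adjacent region. Termination and the combinatorics of internal faces are exactly what Lemmas \ref{lem1}--\ref{lem2} provide for $n\le 3$, and consistency of the resulting coefficients $\lambda_i$ (independence of the order of the reductions) follows from Theorem \ref{thm:evathm} applied to the closed tangles $\langle \phi,y\rangle$ obtained by pairing $y$ with any fixed element $\phi\in S_3$. Non-adjacency guarantees that Moves 0--3 inside $y$ are literally non-adjacent to $x$ and thus commute with any Move--4 on $x$ (as already established in the proof of Theorem \ref{thm:evathm}), which is the key fact that lets us pull the expansion across the Move--4/Move--5 transformation.
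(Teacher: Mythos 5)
Your proposal is correct and follows essentially the same route as the paper: isolate the $E_1$-subtangle (resp.\ the parallel strands), reduce the complementary $3$-box (resp.\ $2$-box) to elements of $S_3$ (resp.\ $S_2$) via Moves 0--3 using Lemma \ref{lem2} for termination and Theorem \ref{thm:evathm} for $Z$-invariance, and commute these non-adjacent reductions past the move to invoke the hypothesis term by term. The paper merely packages your one-shot expansion as an induction on the number of faces (reducing one internal face of the complement at a time, with a separate induction on the pair of face-counts when the complement of the parallel strands is disconnected), which sidesteps your worry about well-definedness of the coefficients $\lambda_i$ --- a worry that is in any case unnecessary, since a single fixed reduction sequence suffices.
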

    \begin{proof}
        Let $ D $ be a 0-box. Let $ \widetilde{D} $ be obtained by applying Move--4 or Move--5 in $ D $. It suffices to show $ Z(D) = Z(\widetilde{D}) $. We prove by induction on the number of faces of $ D $. 

        Move--4 transforms an $ E_1 $-vector to a linear sum of $ E_2,E_3 $-vectors. The complement of the $ E_1 $-vector is a 3-box $ B_3 $. If $ B_3 $ has no internal faces, then this case reduces to assumption (1). 
        If there exists an internal $ k $-face $ F_k $ ($ k = 1, 2, 3 $) in $ B_3 $, then we can evaluate $ D $ starting from $ F_k $ to obtain a tangle $ D_k $. Hence, $ Z(D) = Z(D_k) $ according to Theorem \ref{thm:evathm}. Similarly, we can select $ F_k $ as the starting face to evaluate  $ \widetilde{D} $, obtaining a tangle $ \widetilde{D}_k $, and $ Z(\widetilde{D}) = Z(\widetilde{D}_k) $. By induction, we can apply Move--4 on the $ E_1 $-vector in $ D_k $ to obtain another tangle without changing the value of the partition function. Note that this tangle is exactly $ \widetilde{D}_k $. Therefore, $ Z(D) = Z(D_k) = Z(\widetilde{D}_k) = Z(\widetilde{D}) $.

        The complement of \begin{tikzpicture}[baseline=0.65ex]
            \draw[] (0,0) -- (0,0.5);
            \draw[] (0.3,0) -- (0.3,0.5);
        \end{tikzpicture} is a 2-box $ B_2 $. If they form a connected tangle, then the argument is similar to above by induction hypothesis. If they form two connected tangles, then $ B_2 $ must be disconnected. We prove by induction on the pair of the number of faces of two connected components of $ B_2 $. If $ B_2 = \vcenter{\hbox{\begin{tikzpicture}
            \draw[] (0,0) -- (0,0.5);
            \draw[] (0.3,0) -- (0.3,0.5);
        \end{tikzpicture}}} $, then this case reduces to assumption (2). Otherwise, there exists an internal $ k $-face $ F_k $ $ (k = 1, 2, 3) $ in one connected component of $ B_2 $. The similar argument applies.
    \end{proof}

    \begin{remark}
        Theorem \ref{thm:caeva}, \ref{thm:evathm}, \ref{thm:conthm} establishes that the consistency conditions of exchange relations can be reduced to the consistency of finitely many evaluation paths on finitely many 0-boxes. Hence, the consistency conditions are equivalent to finitely many algebraic equations on the parameters. However, it is not practical to solve these equations directly in the generic case. 
    \end{remark}

    When restricted to the fusion bialgebra case, we have a canonical basis of $ \mathcal{P}_{2,+} $ and $ \mathcal{P}_{3,+} $, which simplifies the equations. We can compare coefficients on the basis instead of computing inner product.

    \begin{theorem}\label{thm:polyeq}
        Suppose $ \mathcal{B} $ is a subfactorizable fusion bialgebra with exchange relations.
        Let $ \{N_{ij}^{k}\}, \{d_i\}, \{a_{st}^{ij}\}, \{b_{st}^{ij}\} (0 \le i, j, k, s, t \le n) $ be defined as in Formula \ref{convolution} and Def. \ref{Def: Exchange Relation}. Then consistency conditions of exchange relations are equivalent to basic consistency equations in Proposition \ref{basic_cons} and the following system of polynomial equations. 
        \begin{gather}
            d_i = d_{\overline{i}}, \label{trace} \\
            d_id_j = \sum_{s=0}^{n} N_{ij}^{s} d_s, \label{coptrace} \\
            \sum_{s=0}^{n} N_{ij}^{s}N_{sk}^{\ell} = \sum_{s=0}^{n} N_{is}^{\ell}N_{jk}^{s},\label{associativity} \\
            N_{kl}^{s}  (a_{sk}^{ij} + b_{lk}^{ij} - \delta_{si}\delta_{\ell j}) = 0, \label{3face_1} \\
            N_{zy}^{x}\sum_{s=0}^{n} (a_{sz}^{jk}N_{si}^{x} + b_{sz}^{jk}N_{si}^{y} - a_{s\overline{x}}^{\overline{i}j}N_{s\overline{k}}^{\overline{y}} - b_{s\overline{x}}^{\overline{i}j}N_{s\overline{k}}^{z}) = 0, \label{3face_2} \\
            N_{zy}^{x}\sum_{s=0}^{n} (a_{sz}^{jk}N_{si}^{x} + b_{sz}^{jk}N_{si}^{y} - a_{s\overline{x}}^{\overline{i}j}N_{s\overline{k}}^{\overline{y}} - b_{s\overline{x}}^{\overline{i}j}N_{s\overline{k}}^{z}) = 0, \label{3face_3} \\
            N_{i\overline{j}}^{k} = \sum_{s=0}^{n} (a_{sk}^{ij}+b_{sk}^{ij})d_s, \label{E_2face_1}
        \end{gather}
    \end{theorem}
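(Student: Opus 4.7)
The plan is to translate the geometric consistency conditions established in Theorem \ref{thm:conthm} into algebraic equations by working in the canonical basis $\{T(i,j,k) : N_{kj}^{i}\neq 0\}$ of $\mathcal{P}_{3,+}$ given by Proposition \ref{OBP3}, together with the projection basis $\{p_i\}$ of $\mathcal{P}_{2,+}$. Since $\mathcal{P}_{2,+}$ is abelian with a canonical basis and $\mathcal{P}_{3,+}$ has the orthogonal $T$-basis, equality of two vectors in these spaces can be checked coefficient by coefficient, avoiding the need to compute inner products directly. So the strategy is: enumerate the consistency conditions predicted by Theorem \ref{thm:conthm}, expand both sides in the canonical basis, and read off the coefficient equations.

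First I would dispense with the equations that do not involve the exchange parameters. Equation (\ref{trace}) and the $C^*$-algebra axioms summarized in Proposition \ref{basic_cons} come from the isotopy, dual, and spherical properties and are thus automatic. Equation (\ref{coptrace}) is the Markov trace of the convolution rule (\ref{convolution}): take $\text{tr}_2$ of both sides after capping $p_i * p_j$, using $\text{tr}_2(p_s)=d_s\delta$. Equation (\ref{associativity}) comes from the associativity $(p_i * p_j)*p_k = p_i*(p_j*p_k)$, which in turn follows from isotopy of the tangle obtained by composing three horizontally adjacent 2-boxes; expanding each side using (\ref{convolution}) and comparing coefficients of $p_\ell$ produces (\ref{associativity}). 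These already encode all the $S_2$-type inner-product consistencies from part (2) of Theorem \ref{thm:conthm}, because the relevant $S_2$ tangles all reduce either to capping or to multiplying/convolving two projections.

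Next I would handle the exchange-parameter equations, which encode the consistency demanded by part (1) of Theorem \ref{thm:conthm}: namely, applying Move--4 (the exchange relation) on an $E_1$-vector composed with each element of $S_3$ and checking that the result is well-defined. Equation (\ref{3face_1}) is the direct constraint already recorded just before the statement: applying Move--4 on $E_1(p_i,p_j)$ in a neighborhood where the two outer strands then close into $p_k$ (the $E_2$-face of $S_3$) forces the coefficients of each $T(s,\ell,k)$ to match, yielding $N_{k\ell}^{s}(a_{s\ell}^{ij}+b_{s k}^{ij}-\delta_{si}\delta_{\ell j})=0$. Equations (\ref{3face_2}) and (\ref{3face_3}) arise the same way but from the remaining $S_3$ configurations: one composes $E_1(p_j,p_k)$ with a third crossing labelled $p_i$ on the side, applies Move--4, and re-expresses the result in the $T$-basis after using the resolution of identity and rotation $W(i,j,k)$. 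Comparing coefficients of a given $T$-vector on both sides produces the $N_{zy}^{x}$-weighted sums displayed. Finally, equation (\ref{E_2face_1}) comes from the $S_2$-type check on the exchange relation itself: closing the top two strands of (\ref{exr2}) collapses $E_1(i,j)$ to a scalar multiple of $p_k$ via (\ref{convolution}) with coefficient $N_{i\bar{j}}^{k}$, while on the right side both $E_2(s,k)$ and $E_3(s,k)$ collapse to $d_s p_k$; matching the $p_k$-coefficient gives the identity.

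The hard part will be the bookkeeping for step involving (\ref{3face_2}) and (\ref{3face_3}): one has to enumerate the $S_3$ tangles systematically, rotate $E_1$ into the correct spatial configuration (using $W(i,j,k)$ and the $180^\circ$ rotation that interchanges $(i,j,k)$ with $(\bar k,\bar j,\bar i)$), reduce every diagram back to the $T(i,j,k)$ basis via Move--3, and verify that no independent equation is missed and that no listed equation is overdetermined. This requires a careful case analysis over the 14 tangles in $S_3$, but each individual case is a routine expansion. After completing the case analysis, the converse direction (the listed equations imply consistency on all 0-boxes) is immediate from Theorems \ref{thm:caeva}, \ref{thm:evathm}, \ref{thm:conthm}: any canonical evaluation path terminates in a scalar, and the invariance under Move--4 and Move--5 has just been reduced to exactly these algebraic identities.
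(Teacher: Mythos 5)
Your proposal is correct and follows essentially the same route as the paper: both reduce the finitely many consistency tangles of Theorems \ref{thm:evathm} and \ref{thm:conthm} to coefficient comparisons in the orthogonal $T$-basis of $\mathcal{P}_{3,+}$ and the projection basis of $\mathcal{P}_{2,+}$, with the same assignment of tangles to equations (trace and convolution caps for (\ref{trace})--(\ref{associativity}), the three ways of reducing $T(i,j,k)$ and $W(i,j,k)$ for (\ref{3face_1})--(\ref{3face_3}), and the capped 3-face for (\ref{E_2face_1})). The only part you defer — checking that the remaining $S_3$ pairings $\text{tr}_3(E_1E_1)$, $\text{tr}_3(E_1E_2)$, $\text{tr}_3(E_1E_3)$ yield no new equations — is exactly the residual verification the paper carries out explicitly.
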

    \begin{proof}
        We consider all tangles in the consistency conditions from Theorem \ref{thm:evathm}, \ref{thm:conthm}. They have two kinds of shadings.

        Figure \ref{eva_a} shows two different ways to compute trace, and this has been proven in Proposition \ref{basic_cons} to be equivalent to Equation \ref{trace}. 

        There is only one non-trivial shading on Figure \ref{eva_b}. On one hand, we can evaluate convolution and then evaluate 1-face. On the other hand, we can evaluate 1-faces twice. Hence, we obtain Equation \ref{coptrace}.

        Figure \ref{eva_c} shows evaluation of two 2-faces using the same crossings. This gives $ \delta_{ij}d_i = N_{i\overline{j}}^{0} $, which is a basic consistency equation in Proposition \ref{basic_cons}.

        Figure \ref{eva_d} shows the associativity of multiplication and convolution. Hence, we obtain Equation \ref{associativity}. Take $ \ell = 0 $ in Equation \ref{associativity}, we obtain Frobenius reciprocity.

        Figure \ref{eva_e} is a $ 3 $-face $ T(i,j,k) $ or $ W(i,j,k) $ depending on the shading. We first evaluate $ T(i,j,k) $ in 3 ways.
        Using exchange relation \ref{exr2} on $ p_i $ and $ p_j $ gives 
        \begin{equation}
            \begin{aligned}
                T(i,j,k) &= \sum_{s=0}^{n} a_{sk}^{ij}E_2(s,k) + \sum_{\ell = 0}^{n} b_{\ell k}^{ij} E_3(\ell,k) \\
                &= \sum_{s,\ell = 0}^{n} (a_{sk}^{ij}+b_{\ell k}^{ij})T(s,l,k).
            \end{aligned}
        \end{equation}
        
        Note that $ T(s,\ell,k) = 0 \Longleftrightarrow N_{k\ell}^{s} = 0 $. If $ N_{kj}^{i} \ne 0 $, we have $ a_{ik}^{ij}+b_{jk}^{ij} = 1 $. If $ N_{k\ell}^{s} \ne 0 $ and $ s \ne i $ or $ \ell \ne j $, we have $ a_{sk}^{ij}+b_{\ell k}^{ij} = 0 $. We combine the two situations into Equation \ref{3face_1}. Similarly, using exchange relation \ref{exr2} on $ p_j $ and $ p_k $, $ p_k $ and $ p_i $ gives 
        \begin{align}
            N_{\ell \overline{s}}^{i}(a_{s\overline{i}}^{\overline{j}k}+b_{\ell \overline{i}}^{\overline{j}k} - \delta_{s\overline{j}}\delta_{\ell k}) = 0, \\
            N_{\overline{s}j}^{\overline{\ell}}(a_{sj}^{\overline{k}\overline{i}}+b_{\ell j}^{\overline{k}\overline{i}} - \delta_{s\overline{k}}\delta_{\ell \overline{i}}) = 0.
        \end{align}
        Frobenius reciprocity requires $ N_{\ell \overline{s}}^{i} \ne 0 \Leftrightarrow N_{\overline{i} \ell}^{s} \ne 0 $, so the above two equations are equivalent to Equation \ref{3face_1}. 

        We now evaluate $ W(i,j,k) $ in 3 different ways. Choosing any two of $ p_i, p_j $ and $ p_k $ to use exchange relation \ref{exr2} gives 
        \begin{align}
            W(i,j,k) & = \sum_{s,z = 0}^{n} a_{sz}^{jk}\sum_{x=0}^{n} N_{si}^{x}E_2(x,z) + \sum_{s,z = 0}^{n} b_{sz}^{jk}\sum_{y=0}^{n} N_{si}^{y}E_3(y,z) \\
            & = \sum_{z,x,y=0}^{n} \left[\sum_{s=0}^{n} (a_{sz}^{jk}N_{si}^{x} + b_{sz}^{jk}N_{si}^{y})\right]T(x,y,z), \\
            W(i,j,k) & = \sum_{s,x = 0}^{n} a_{s\overline{x}}^{\overline{i}j}\sum_{y=0}^{n} N_{s\overline{k}}^{\overline{y}}E_1(x,y) + \sum_{s,x = 0}^{n} b_{s\overline{x}}^{\overline{i}j}\sum_{z=0}^{n} N_{s\overline{k}}^{z}E_2(x,z) \\
            & = \sum_{z,x,y=0}^{n} \left[\sum_{s=0}^{n} (a_{s\overline{x}}^{\overline{i}j}N_{s\overline{k}}^{\overline{y}} + b_{s\overline{x}}^{\overline{i}j}N_{s\overline{k}}^{z})\right]T(x,y,z), \\  
            W(i,j,k) & = \sum_{s,y = 0}^{n} a_{sy}^{k\overline{i}}\sum_{z=0}^{n} N_{s\overline{j}}^{\overline{z}}E_3(y,z) + \sum_{s,y = 0}^{n} b_{sy}^{k\overline{i}}\sum_{x=0}^{n} N_{s\overline{j}}^{\overline{x}}E_1(x,y) \\
            & = \sum_{z,x,y=0}^{n} \left[\sum_{s=0}^{n} (a_{sy}^{k\overline{i}}N_{s\overline{j}}^{\overline{z}} + b_{sy}^{k\overline{i}}N_{s\overline{j}}^{\overline{x}})\right]T(x,y,z). 
        \end{align}
        Compare coefficients of $ T(x,y,z) $, and we obtain Equation \ref{3face_2} and \ref{3face_3}.

        One shading on Figure \ref{eva_f} gives 
        \begin{align}
            \begin{tikzpicture}[baseline={([yshift=-0.5ex]current bounding box.center)}]
                \node (i) at (0,1) [rectangle, draw]{$ p_i $};
                \node (j) at (1,0) [rectangle, draw, rotate=90] {$ p_j $};
                \node (k) at (0,-1) [rectangle, draw]{$ p_k $};            
                \draw (0.15,0.7) .. controls +(down:5mm) and+(left:5mm) .. (0.7,0.15);
                \draw (0.15,-0.7) .. controls +(up:5mm) and+(left:5mm) .. (0.7,-0.15);
                \draw (1.3,0.15) .. controls +(right:5mm) and +(down:5mm) .. (1.8,0.7) -- (1.8,1.3) ;
                \draw (1.3,-0.15) .. controls +(right:5mm) and +(up:5mm) .. (1.8,-0.7) -- (1.8,-1.6) ;
                \draw (-0.15,0.7) -- (-0.15,-0.7);
                \draw (-0.15,1.3) -- (-0.15,1.6);
                \draw (0.15,-1.3) -- (0.15,-1.6);
                \draw (-0.15,-1.3) -- (-0.15,-1.6);
                \draw (0.15,1.3) .. controls +(up:3mm) and +(left:3mm) .. (0.45,1.6) -- (1.5,1.6);
                \draw (1.5,1.6) .. controls +(right:3mm) and +(up:3mm) .. (1.8,1.3);
            \end{tikzpicture}
            &= N_{i\overline{j}}^{k} p_k, \\
            \begin{tikzpicture}[baseline={([yshift=-0.5ex]current bounding box.center)}]
                \node (i) at (0,1) [rectangle, draw]{$ p_i $};
                \node (j) at (1,0) [rectangle, draw, rotate=90] {$ p_j $};
                \node (k) at (0,-1) [rectangle, draw]{$ p_k $};            
                \draw (0.15,0.7) .. controls +(down:5mm) and+(left:5mm) .. (0.7,0.15);
                \draw (0.15,-0.7) .. controls +(up:5mm) and+(left:5mm) .. (0.7,-0.15);
                \draw (1.3,0.15) .. controls +(right:5mm) and +(down:5mm) .. (1.8,0.7) -- (1.8,1.3) ;
                \draw (1.3,-0.15) .. controls +(right:5mm) and +(up:5mm) .. (1.8,-0.7) -- (1.8,-1.6) ;
                \draw (-0.15,0.7) -- (-0.15,-0.7);
                \draw (-0.15,1.3) -- (-0.15,1.6);
                \draw (0.15,-1.3) -- (0.15,-1.6);
                \draw (-0.15,-1.3) -- (-0.15,-1.6);
                \draw (0.15,1.3) .. controls +(up:3mm) and +(left:3mm) .. (0.45,1.6) -- (1.5,1.6);
                \draw (1.5,1.6) .. controls +(right:3mm) and +(up:3mm) .. (1.8,1.3);
            \end{tikzpicture} &= \sum_{s=0}^{n} a_{sk}^{ij}\ \begin{tikzpicture}[inner sep=0pt,minimum size=6mm,baseline={([yshift=-0.5ex]current bounding box.center)}]
                \node (s) at (0,1) [rectangle, draw]{$ p_s $};
                \node (t) at (0,0) [rectangle, draw] {$ p_k $};
                \draw (-0.15,0.7) -- (-0.15,0.3);
                \draw (0.15,1.3) ..controls (0.15,1.6) and (0.45,1.6) .. (0.45,1.3);
                \draw (-0.15,1.3) -- (-0.15,1.6);
                \draw (0.15,-0.3) -- (0.15,-0.7);
                \draw (-0.15,-0.3) -- (-0.15,-0.7);
                \draw (0.15,0.7) .. controls (0.15,0.4) and(0.45,0.4) .. (0.45,0.7) -- (0.45,1.3);
                \draw (0.15,0.3) .. controls (0.3,0.5) and(0.7,0.5) .. (0.7,-0.7);
            \end{tikzpicture} + \sum_{s=0}^{n} b_{sk}^{ij} \ 
            \begin{tikzpicture}[inner sep=0pt,minimum size=6mm,baseline={([yshift=-0.5ex]current bounding box.center)}]
                \node (t) at (0,-1) [rectangle, draw]{$ p_k $};
                \node (s) at (1,0) [rectangle, draw, rotate=90] {$ p_s $};
                \draw (0.2,0.35) .. controls +(down:2mm) and+(left:5mm) .. (0.7,0.15);
                \draw (0.2,0.35) .. controls +(up:2mm) and+(left:5mm) .. (0.7,0.7) -- (1.3,0.7);
                \draw (1.3,0.7) .. controls +(right:5mm) and +(up:2mm) .. (1.8,0.35);
                \draw (0.15,-0.7) .. controls +(up:5mm) and+(left:5mm) .. (0.7,-0.15);
                \draw (1.3,0.15) .. controls +(right:5mm) and +(down:2mm) .. (1.8,0.35);
                \draw (1.3,-0.15) .. controls +(right:5mm) and +(up:5mm) .. (1.8,-0.7) -- (1.8,-1.6) ;
                \draw (-0.15,0.7) -- (-0.15,-0.7);
                \draw (0.15,-1.3) -- (0.15,-1.6);
                \draw (-0.15,-1.3) -- (-0.15,-1.6);
            \end{tikzpicture}  \\ 
            &= \sum_{s=0}^{n} (a_{sk}^{ij} + b_{sk}^{ij})d_s p_k. 
        \end{align}
        Compare coefficients of $ p_k $, and we get Equation \ref{E_2face_1}.

        Another shading on Figure \ref{eva_f} gives
        \begin{align}
            \begin{tikzpicture}[inner sep=0pt,minimum size=6mm,baseline={([yshift=-0.5ex]current bounding box.center)}]
                \node (i) at (0,1) [rectangle, draw]{$ p_i $};
                \node (j) at (1,0) [rectangle, draw, rotate=90] {$ p_j $};
                \node (k) at (1,2) [rectangle, draw, rotate=90] {$ p_k $};
                \draw (0.15,1.3) .. controls +(up:5mm) and+(left:5mm) .. (0.7,1.85);
                \draw (0.15,0.7) .. controls +(down:5mm) and+(left:5mm) .. (0.7,0.15);
                \draw (0.15,-0.4) .. controls +(up:2mm) and+(left:5mm) .. (0.7,-0.15);
                \draw (1.3,0.15) .. controls +(right:5mm) and +(down:5mm) .. (1.8,0.7) -- (1.8,1.3);
                \draw (1.3,2.15) .. controls +(right:5mm) and +(down:2.5mm) .. (1.8,2.4);
                \draw (0.7,2.15) .. controls +(left:5mm) and +(down:2.5mm) .. (0.15,2.4);
                \draw (1.3,-0.15) .. controls +(right:5mm) and +(up:5mm) .. (1.8,-0.7) ;
                \draw (1.3,1.85) .. controls +(right:5mm) and +(up:5mm) .. (1.8,1.3) ;
                \draw (-0.15,0.7) -- (-0.15,-0.4);
                \draw (-0.15,-0.4) .. controls (-0.15,-0.7) and (0.15,-0.7) .. (0.15,-0.4);
                \draw (-0.15,1.3) -- (-0.15,2.4);
            \end{tikzpicture} &= \sum_{\ell = 0}^{n} \delta_{ij} N_{ik}^{\ell}p_{\ell}, \\
            \begin{tikzpicture}[inner sep=0pt,minimum size=6mm,baseline={([yshift=-0.5ex]current bounding box.center)}]
                \node (i) at (0,1) [rectangle, draw]{$ p_i $};
                \node (j) at (1,0) [rectangle, draw, rotate=90] {$ p_j $};
                \node (k) at (1,2) [rectangle, draw, rotate=90] {$ p_k $};
                \draw (0.15,1.3) .. controls +(up:5mm) and+(left:5mm) .. (0.7,1.85);
                \draw (0.15,0.7) .. controls +(down:5mm) and+(left:5mm) .. (0.7,0.15);
                \draw (0.15,-0.4) .. controls +(up:2mm) and+(left:5mm) .. (0.7,-0.15);
                \draw (1.3,0.15) .. controls +(right:5mm) and +(down:5mm) .. (1.8,0.7) -- (1.8,1.3);
                \draw (1.3,2.15) .. controls +(right:5mm) and +(down:2.5mm) .. (1.8,2.4);
                \draw (0.7,2.15) .. controls +(left:5mm) and +(down:2.5mm) .. (0.15,2.4);
                \draw (1.3,-0.15) .. controls +(right:5mm) and +(up:5mm) .. (1.8,-0.7) ;
                \draw (1.3,1.85) .. controls +(right:5mm) and +(up:5mm) .. (1.8,1.3) ;
                \draw (-0.15,0.7) -- (-0.15,-0.4);
                \draw (-0.15,-0.4) .. controls (-0.15,-0.7) and (0.15,-0.7) .. (0.15,-0.4);
                \draw (-0.15,1.3) -- (-0.15,2.4);
            \end{tikzpicture} &= \sum_{s,\ell=0}^{n} (a_{s0}^{ij} + b_{s0}^{ij}) N_{sk}^{\ell}p_{\ell}.
        \end{align}
        Compare coefficients of $ p_{\ell} $, we get $ \sum\limits_{s=0}^{n} (a_{s0}^{ij}+b_{s0}^{ij})N_{sk}^{\ell} = \delta_{ij}N_{ik}^{\ell} $, which can be deduced from Equation \ref{3face_1}.

        Condition (1) in Theorem \ref{thm:conthm} involves 14 tangles, only 3 of them may give non-trivial equations, i.e. $ \text{tr}_3(E_1(i,j)E_1(x,y)) $, $ {tr}_3(E_1(i,j)E_2(x,y)) $, and $ \text{tr}_3(E_1(i,j)E_3(x,y)) $.

        First, we apply exchange relations to evaluate $ \text{tr}_3(E_1(i,j)E_1(x,y)) $.
        \begin{align}
            \text{tr}_3(E_1(i,j)E_1(x,y)) &= \delta_{j\overline{y}}N_{\overline{x}i}^{\overline{y}}\text{tr}_2(\overline{y}), \\
            \text{tr}_3(E_1(i,j)E_1(x,y)) &= \sum_{s,t=0}^{n} a_{st}^{ij}\text{tr}_3(E_2(s,t)E_1(x,y)) + \sum_{\ell,m=0}^{n} b_{\ell m}^{ij}\text{tr}_3(E_3(\ell,m)E_1(x,y)) \\
            &= \sum_{s=0}^{n} a_{sx}^{ij}N_{\overline{x}s}^{\overline{y}}\text{tr}_2(\overline{y}) + b_{\overline{y} x}^{ij}d_x\text{tr}_2(\overline{y}) \\
            (\text{by Eq. } \ref{coptrace})&= \sum_{s=0}^{n} a_{sx}^{ij}N_{x\overline{y}}^{s}\text{tr}_2(s) + \sum_{s=0}^{n}b_{\overline{y} x}^{ij}N_{x\overline{y}}^{s}\text{tr}_2(s) \\
            (\text{by Eq. } \ref{3face_1})&= \delta_{j\overline{y}}N_{x\overline{y}}^{i}\text{tr}_2(i)
        \end{align}
        Hence, two different ways give the same scalar by Frobenius reciprocity. Similarly, applying exchange relations to evaluate $ {tr}_3(E_1(i,j)E_2(x,y)) $ and $ \text{tr}_3(E_1(i,j)E_3(x,y)) $ also gives the same scalar by equations that we have obtained.

        Condition (2) in Theorem \ref{thm:conthm} reduces to basic consistency equation $ \delta = \sum\limits_{i=0}^{n} d_i $ in Proposition \ref{basic_cons}.
        
    \end{proof}

    \begin{remark}
        We call these equations the system of structure equations for exchange relation planar algebras. The highest degree of the system is $ 3 $. The number of variables is $ O(n^4) $, while the number of equations is $ O(n^6) $. A solution of the system produces a planar algebra. For comparison, the pentagon equation is also a system of polynomial equations of degree 3, involving several variables and integer coefficients, given a base fusion ring. A solution of the pentagon equation is the main condition for a fusion ring to be the Grothendieck ring of a unitary fusion category. The structure equations are more complicated than pentagon equations in that the fusion coefficients are unknown and assume values in non-negative real numbers. This complexity makes solving the structure equations a challenging task in the study of exchange relation planar algebras.
    \end{remark}

    \section{Forest fusion graph}\label{sec:cg}
    In this section, we introduce the weighted fusion graph of a fusion bialgebra, which encodes all information of a fusion bialgebra. This concept originates from an observation that the norm of $ T(i,j,k) $ is $ \sqrt{{N}_{kj}^{i}d_i\delta} $, so $ T(i,j,k) $ is zero if and only if $ {N}_{kj}^{i} = 0 \Leftrightarrow  \widetilde{N}_{kj}^{i} = 0 $. The zeroes of $ \{\widetilde{N}_{kj}^{i}\} $ play an important role in the classification of subfactorizable fusion bialgebras. 

    Theorem \ref{fub} shows the $ \mathbb{R}_{\ge 0} $-basis are scalar multiples of minimal projections of the 2-box space $ \mathcal{P}_{2,+} $. We adopt the fusion coefficients $ \{N_{kj}^{i}\}_{0\le i,j,k \le n} $ with respect to minimal projections $ \{p_{0}, p_{1}, \cdots, p_{n}\} $ for a fusion bialgebra $ \mathcal{B} $.

    \begin{definition}\label{def:cg}
        For each fusion matrix $ {N}_{k\cdot}^{\cdot} $ in a fusion bialgebra $ \mathcal{B} $, we define its weighted fusion graph $ \Gamma_k $ of $ p_k $ as a bipartite graph where white(black) vertices are row(column) indices. If $ N_{kj}^{i} \ne 0 $, then the white vertex $ i $ and the black vertex $ j $ are connected by a edge with the weight $ N_{kj}^{i} $. A weighted fusion graph $ \Gamma $ of the fusion bialgebra $ \mathcal{B} $ is a set of fusion graphs $ \{\Gamma_k: 0\le k \le n\} $. We call $ \Gamma $ a fusion graph if the weight is omitted.
    \end{definition}

    \begin{remark}
        A fusion bialgebra is equivalent to its weighted fusion graph. The weighted fusion graph of a subfactorizable fusion bialgebra describes the decomposition of convolutions of minimal projections. As a contrast, the principal graph of subfactors describes the decomposition of irreducible bimodule tensoring with the generating bimodule. The fusion graph is a significant combinatorial invariant for subfactorizable fusion bialgebras as the principal graph is for subfactors.
    \end{remark}

    \begin{example}
        The principal graphs of subfactor planar algebras of index less than 4 are classified by Dynkin diagrams of type $ A_n, D_{2n}, E_6 $, and $ E_8 $. The dimensions of 2-box spaces of these subfactor planar algebras are all 2. The fusion graph of $ \mathcal{P}_{2,+} $ contains two fusion graphs $ \Gamma_0 $ and $ \Gamma_1 $. $ \Gamma_0 $ is trivial for all fusion bialgebras. The fusion graph of $ p_1 $ is shown in the graph $ \Gamma_1 $. $ \Gamma_1 $ can also be the fusion graph of $ p_1 $ in Temperley-Lieb-Jones planar algebras $ TL(\delta) $ for $ \delta \ge \sqrt{2} $. When $ \delta = \sqrt{2} $, the edge connecting the white vertex 1 and the black vertex 1 is omitted.
        \begin{align}
            \Gamma_1: 
            \begin{tikzpicture}[scale=0.8,baseline={([yshift=-0.5ex]current bounding box.center)}]
                \foreach \x in {0,1}{
                  \node[circle, fill=black, inner sep=2pt, outer sep=1pt, label=left: \x] (A\x) at (0,-\x) {};
                }
                \foreach \y in {0,1}{
                  \node[circle,draw, inner sep=2pt, outer sep=1pt, label=right: \y] (B\y) at (2,-\y) {};
                }
                \draw (A0) -- (B1);
                \draw (A1) -- (B0);
                \draw (A1) -- (B1);
            \end{tikzpicture}.
        \end{align}
    \end{example}

    \begin{example}
        In the one-parameter BMW planar algebras, $ \dim\mathcal{P}_{2,\pm} = 3 $ and $ \dim \mathcal{P}_{3,\pm} = 14 $. $ p_1 $ and $ p_2 $ are both self-dual.  The fusion graph $ \Gamma $ is shown as follows.
        \begin{align}
            \Gamma_1: 
            \begin{tikzpicture}[scale=0.8,baseline={([yshift=-0.5ex]current bounding box.center)}]
                \foreach \x in {0,1,2}{
                  \node[circle, fill=black, inner sep=2pt, outer sep=1pt, label=left: \x] (A\x) at (0,-\x) {};
                }
                \foreach \y in {0,1,2}{
                  \node[circle,draw, inner sep=2pt, outer sep=1pt, label=right: \y] (B\y) at (2,-\y) {};
                }
                \draw (A0) -- (B1);
                \draw (A1) -- (B0);
                \draw (A1) -- (B2);
                \draw (A2) -- (B1);
                \draw (A2) -- (B2);
            \end{tikzpicture},
              \quad
            \Gamma_2: 
            \begin{tikzpicture}[scale=0.8,baseline={([yshift=-0.5ex]current bounding box.center)}]
                \foreach \x in {0,1,2}{
                  \node[circle, fill=black, inner sep=2pt, outer sep=1pt, label=left: \x] (A\x) at (0,-\x) {};
                }
                \foreach \y in {0,1,2}{
                  \node[circle,draw, inner sep=2pt, outer sep=1pt, label=right: \y] (B\y) at (2,-\y) {};
                }
                \draw (A0) -- (B2);
                \draw (A1) -- (B1);
                \draw (A1) -- (B2);
                \draw (A2) -- (B0);
                \draw (A2) -- (B1);
                \draw (A2) -- (B2);
            \end{tikzpicture}.
        \end{align}
    \end{example}

    \begin{example}
        The fusion graphs of $ p_1 $ and $ p_2 $ in Yang-Baxter relation planar algebras with $ \dim \mathcal{P}_{2,\pm} = 3 $ and $ \dim \mathcal{P}_{3,\pm} = 15 $.

        Case 1: $ p_1 $ and $ p_2 $ are both self-dual. This case corresponds to 2-parameter BMW planar algebras.
        \begin{align}
            \Gamma_1: 
            \begin{tikzpicture}[scale=0.8,baseline={([yshift=-0.5ex]current bounding box.center)}]
                \foreach \x in {0,1,2}{
                  \node[circle, fill=black, inner sep=2pt, outer sep=1pt, label=left: \x] (A\x) at (0,-\x) {};
                }
                \foreach \y in {0,1,2}{
                  \node[circle,draw, inner sep=2pt, outer sep=1pt, label=right: \y] (B\y) at (2,-\y) {};
                }
                \draw (A0) -- (B1);
                \draw (A1) -- (B0);
                \draw (A1) -- (B1);
                \draw (A1) -- (B2);
                \draw (A2) -- (B1);
                \draw (A2) -- (B2);
            \end{tikzpicture},
              \quad
            \Gamma_2: 
            \begin{tikzpicture}[scale=0.8,baseline={([yshift=-0.5ex]current bounding box.center)}]
                \foreach \x in {0,1,2}{
                  \node[circle, fill=black, inner sep=2pt, outer sep=1pt, label=left: \x] (A\x) at (0,-\x) {};
                }
                \foreach \y in {0,1,2}{
                  \node[circle,draw, inner sep=2pt, outer sep=1pt, label=right: \y] (B\y) at (2,-\y) {};
                }
                \draw (A0) -- (B2);
                \draw (A1) -- (B1);
                \draw (A1) -- (B2);
                \draw (A2) -- (B0);
                \draw (A2) -- (B1);
                \draw (A2) -- (B2);
            \end{tikzpicture}.
        \end{align}

        Case 2: $ p_1 $ and $ p_2 $ are dual to each other. This case corresponds to a new family of planar algebras constructed by the second author. 
        \begin{align}
            \Gamma_1: 
            \begin{tikzpicture}[scale=0.8,baseline={([yshift=-0.5ex]current bounding box.center)}]
                \foreach \x in {0,1,2}{
                  \node[circle, fill=black, inner sep=2pt, outer sep=1pt, label=left: \x] (A\x) at (0,-\x) {};
                }
                \foreach \y in {0,1,2}{
                  \node[circle,draw, inner sep=2pt, outer sep=1pt, label=right: \y] (B\y) at (2,-\y) {};
                }
                \draw (A0) -- (B1);
                \draw (A1) -- (B1);
                \draw (A1) -- (B2);
                \draw (A2) -- (B0);
                \draw (A2) -- (B1);
                \draw (A2) -- (B2);
            \end{tikzpicture},
              \quad
            \Gamma_2: 
            \begin{tikzpicture}[scale=0.8,baseline={([yshift=-0.5ex]current bounding box.center)}]
                \foreach \x in {0,1,2}{
                  \node[circle, fill=black, inner sep=2pt, outer sep=1pt, label=left: \x] (A\x) at (0,-\x) {};
                }
                \foreach \y in {0,1,2}{
                  \node[circle,draw, inner sep=2pt, outer sep=1pt, label=right: \y] (B\y) at (2,-\y) {};
                }
                \draw (A0) -- (B2);
                \draw (A1) -- (B1);
                \draw (A1) -- (B2);
                \draw (A1) -- (B0);
                \draw (A2) -- (B1);
                \draw (A2) -- (B2);
            \end{tikzpicture}
        \end{align}
    \end{example}

    \begin{lemma}\label{thm:acyclic}
        Suppose $ \mathcal{B} $ is a subfactorizable fusion bialgebra with exchange relations. If the fusion graph $ \Gamma_k $ of fusion matrix $ {N}_{k\cdot}^{\cdot} $ is acyclic, $ T(i,j,k) $ can be represented as an alternating sum of $ E_2 $-vectors and $ E_3 $-vectors. Furthermore, let $ C=C_1\sqcup (i,j)\sqcup C_2 $ be the connected component in which the edge $ (i,j) $ lies. $ C_1 $ ($ C_2 $) is the connected component in which $ i $ ($ j $) lies when removing the edge $ (i,j) $ from $ C $. We have the following two formulas.
        \begin{align} 
            T(i,j,k) = \sum_{\text{white}\  s\in C_1} E_2(s,k) - \sum_{\substack{\text{black}\  \ell \in C_1 }} E_3(\ell,k), \label{maineq1}\\ 
            T(i,j,k) = \sum_{\text{black}\  \ell \in C_2} E_3(\ell,k) - \sum_{\substack{\text{white}\  s \in C_2 }} E_2(s,k). \label{maineq2}
        \end{align}
    \end{lemma}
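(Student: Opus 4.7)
The plan is to prove the two displayed identities by directly computing the right-hand sides in the orthogonal basis $COB = \{T(s,\ell,k): N_{k\ell}^{s} \ne 0\}$ of $\mathcal{P}_{3,+}$ from Proposition \ref{OBP3}, and then invoking the forest hypothesis combinatorially. The starting point is the expansion already available from the proof of Theorem \ref{thm:polyeq}: applying the exchange relation \refeq{exr2} on $p_i$ and $p_j$ gives
\begin{equation*}
T(i,j,k) = \sum_{s=0}^{n} a_{sk}^{ij} E_2(s,k) + \sum_{\ell=0}^{n} b_{\ell k}^{ij} E_3(\ell,k),
\end{equation*}
together with Equation \refeq{3face_1}, which imposes $a_{sk}^{ij}+b_{\ell k}^{ij}=\delta_{si}\delta_{\ell j}$ precisely along the edges of $\Gamma_k$ (i.e.\ those pairs $(s,\ell)$ with $N_{k\ell}^{s}\ne 0$). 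Rather than solving for $a_{sk}^{ij}$ and $b_{\ell k}^{ij}$ directly, I will define the candidate right-hand side $R := \sum_{\text{white } s\in C_1} E_2(s,k) - \sum_{\text{black } \ell\in C_1} E_3(\ell,k)$ and show $R = T(i,j,k)$.

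The first key step is to rewrite each $E_2$- and $E_3$-vector in terms of $T$-vectors via the resolution of identity \refeq{ROI} inserted on the appropriate pair of strings, yielding $E_2(s,k)=\sum_{\ell'} T(s,\ell',k)$ and $E_3(\ell,k)=\sum_{s'} T(s',\ell,k)$, where nonzero terms correspond exactly to edges of $\Gamma_k$ incident to $s$ or $\ell$ respectively. Substituting these into $R$ and collecting the coefficient of each nonzero $T(s,\ell,k)$ yields the expression $[s\in C_1]-[\ell\in C_1]$, where $[\cdot]$ denotes the indicator.

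The main step is the combinatorial case analysis, which is exactly where the forest hypothesis enters. For any edge $(s,\ell)$ of $\Gamma_k$ lying in a connected component different from $C$, both endpoints lie outside $C_1$ and the coefficient vanishes. For an edge inside $C$ other than $(i,j)$, the acyclicity of $C$ forces $C\setminus\{(i,j)\}$ to split into the two pieces $C_1\sqcup C_2$ with no further edge between them, so any such edge lies entirely in $C_1$ or entirely in $C_2$, and in both cases the coefficient $[s\in C_1]-[\ell\in C_1]$ vanishes. The unique surviving edge is $(i,j)$ itself, where $i\in C_1$ and $j\in C_2$ yield coefficient $1$. Hence $R = T(i,j,k)$, which is \refeq{maineq1}. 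I expect this case-by-case cancellation to be the substantive point of the proof, since it is exactly the place where ``forest'' is used to turn a mere linear constraint from \refeq{3face_1} into the uniqueness of the $\pm 1$ coefficients.

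Finally, \refeq{maineq2} can be obtained by the symmetric argument with $C_2$ in place of $C_1$ (and signs reversed because the edge $(i,j)$ is now traversed from the black side). Equivalently, the same expansion applied to $\sum_{\text{white } s\in C} E_2(s,k) - \sum_{\text{black } \ell\in C} E_3(\ell,k)$ shows that every coefficient $[s\in C]-[\ell\in C]$ vanishes (edges inside $C$ contribute $1-1=0$, edges outside $C$ contribute $0-0=0$), giving the auxiliary identity $\sum_{\text{white } s\in C} E_2(s,k) = \sum_{\text{black } \ell\in C} E_3(\ell,k)$; subtracting \refeq{maineq1} from this identity yields \refeq{maineq2}.
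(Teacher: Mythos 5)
Your proposal is correct and takes essentially the same route as the paper: both expand $E_2(s,k)$ and $E_3(\ell,k)$ as sums of $T$-vectors indexed by the edges of $\Gamma_k$, and both use the fact that deleting the edge $(i,j)$ from the tree $C$ splits it into $C_1$ and $C_2$ with no other edge crossing between them, so that all $T$-vectors cancel except $T(i,j,k)$. Your coefficient computation $[s\in C_1]-[\ell\in C_1]$ is just a reformulation of the paper's ``counting edges in $C_1$ in two ways,'' and your derivation of the second formula via the auxiliary identity $\sum_{\text{white}\, s\in C}E_2(s,k)=\sum_{\text{black}\, \ell\in C}E_3(\ell,k)$ matches the paper's observation as well.
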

    \begin{proof}
        We first give an interpretation of the fusion graph $ \Gamma_k $. We use the white vertex $ s $ to represent $ E_2(s,k) $ and the black vertex $ \ell $ to represent $ E_3(\ell,k) $. The edge connecting the white vertex $ s $ represents a $ T $-vector that is a summand in $ E_2(s,k) $ by the following formula.
        \begin{align}
            E_2(s,k) = \sum_{\substack{0\le\ell\le n, \\ N_{k\ell}^{s} \ne 0}} T(s,\ell,k). \\
            E_3(\ell,k) = \sum_{\substack{0\le s\le n, \\ N_{k\ell}^{s} \ne 0}} T(s,\ell,k).
        \end{align}
        The edge $ (s,\ell) $ exists if and only if $ T(s,\ell,k) \ne 0 $ by definition. In this case, $ T(s,\ell,k) $ is a common summand of $ E_2(s,k) $ and $ E_3(\ell,k) $.
        
        If $ T(i,j,k) $ is zero, the edge $ (i,j) $ does not exist and the conclusion is obvious. If $ T(i,j,k) \ne 0 $, the white vertex $ i $ and the black vertex $ j $ are adjacent. Note that the sum of all $ E_2 $-vectors in $ C $ and the sum of all $ E_3 $-vectors in $ C $ are equal, because all edges in $ C $ are precisely all edges incident to white(black) vertices.

        Since $ \Gamma_k $ is acyclic, each of its connected component is a tree. If we remove the edge $ (i,j) $ from $ C $, $ C $ will be divided into two connected components $ C_1 $ and $ C_2 $. While all black vertices in $ C_1 $ represent $ E_3 $-vectors, all white vertices in $ C_1 $ represent $ E_2 $-vectors except that $ i $ misses the edge $ (i,j) $ corresponding to $ T(i,j,k) $. Counting edges in $ C_1 $ in two ways gives
        \begin{align}
            \sum_{\text{white}\  s\in C_1}E_2(s,k) - T(i,j,k) =  \sum_{\substack{\text{black}\  \ell \in C_1 }} E_3(\ell,k).
        \end{align} 
        
        Thus we obtain Formula \ref{maineq1}. Similar arguments apply to $ C_2 $ and we obtain Formula \ref{maineq2}. 
    \end{proof}

    \begin{theorem}\label{thm:forest}
        Suppose $ \mathcal{B} $ is a subfactorizable fusion bialgebra. $ \mathcal{B} $ has an exchange relation if and only if every fusion graph $ \Gamma_k $  of the fusion matrix $ {N}_{k\cdot}^{\cdot} $ is a forest.
    \end{theorem}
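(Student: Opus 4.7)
The plan is to recast both directions as statements about the bipartite incidence map of $\Gamma_k$. By Proposition \ref{OBP3}, for fixed $k$ the subspace $\mathcal{P}_{3,+}^{(k)}:=\mathrm{span}\{T(i,j,k): N_{kj}^i\ne 0\}$ has dimension equal to the number of edges of $\Gamma_k$. Resolving the identity on the middle strand gives
\begin{align*}
E_2(i,k)=\sum_{j:\, N_{kj}^i\ne 0}T(i,j,k),\qquad E_3(j,k)=\sum_{i:\, N_{kj}^i\ne 0}T(i,j,k),
\end{align*}
so under the identification of edges of $\Gamma_k$ with this $T$-basis, the vectors $E_2(\cdot,k)$ and $E_3(\cdot,k)$ are exactly the images of the white and black vertices under the vertex-to-edge incidence map $\phi_k$ of $\Gamma_k$. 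A standard bipartite-graph lemma will assert that $\phi_k$ is surjective onto the edge space if and only if $\Gamma_k$ is a forest: on a tree one peels leaves inductively to recover each edge from a single vertex, while any cycle $v_1w_1v_2w_2\cdots v_rw_rv_1$ furnishes a nonzero linear functional on edges (assign alternating signs $\pm 1$ to the cycle edges, $0$ elsewhere) that vanishes on $\mathrm{im}\,\phi_k$, contradicting surjectivity.

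For the backward direction (every $\Gamma_k$ a forest $\Rightarrow$ exchange relation), I will first observe that the resolution of identity on the middle strand of $E_1(i,j)$ yields $E_1(i,j)=\sum_k T(i,j,k)$. Then Lemma \ref{thm:acyclic} rewrites each $T(i,j,k)$ as an alternating sum of $E_2(\cdot,k)$'s and $E_3(\cdot,k)$'s. Substituting back shows that $E_1(i,j)$ is a linear combination of $E_2$- and $E_3$-vectors, which is exactly the exchange relation of Definition \ref{Def: Exchange Relation}.

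For the forward direction (exchange relation $\Rightarrow$ every $\Gamma_k$ a forest), the exchange relation gives $E_1(i,j)=\sum_{s,t}a_{st}^{ij}E_2(s,t)+\sum_{\ell,m}b_{\ell m}^{ij}E_3(\ell,m)$. Inserting the resolution of identity on both sides and projecting onto $\mathcal{P}_{3,+}^{(k)}$ expresses each $T(i,j,k)$ as a linear combination of $E_2(s,k)$ and $E_3(\ell,k)$, forcing $\phi_k$ to be surjective and hence $\Gamma_k$ a forest by the lemma above. The main technical obstacle will be this slicing step: one must justify that after resolving the identity the $k$-components genuinely live in $\mathcal{P}_{3,+}^{(k)}$ with no cross-$k$ contamination, so that surjectivity of $\phi_k$ follows on each slice separately. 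The bipartite-graph lemma itself is elementary once stated, but its proper formulation --- in particular, handling multiple connected components and verifying that the alternating-sign cycle functional really detects every cycle --- needs to be laid out carefully.
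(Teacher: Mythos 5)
Your proposal is correct and follows essentially the same route as the paper: the backward direction is the paper's argument verbatim (resolve the identity to get $E_1(i,j)=\sum_k T(i,j,k)$ and invoke Lemma \ref{thm:acyclic}), and your forward direction's incidence-map surjectivity obstructed by an alternating-sign cycle functional is just a linear-algebraic repackaging of the paper's proof, which multiplies the exchange relation by $p_k$ from below and propagates the labels $a,-a$ around a cycle via Equation \ref{labeq} to reach the contradiction $1=0$. The ``slicing'' step you flag is handled immediately by Proposition \ref{OBP3}: each $E_2(s,t)$, $E_3(\ell,m)$ decomposes into $T$-vectors with fixed third index $t$ (resp.\ $m$), so orthogonality of the $T$-basis (equivalently, right-multiplication by the minimal projection $p_k$) cleanly isolates the $k$-slice.
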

    \begin{proof}
        We first prove the ``if" part. Since 
        \begin{align}
            E_1(i,j) = \sum_{k=0}^{n} T(i,j,k) ,
        \end{align}
        and $ T(i,j,k) $ is an alternating sum of $ E_2 $ and $ E_3 $ vectors by Lemma \ref{thm:acyclic}, the $ E_1 $-vector is a linear combination of $ E_2 , E_3 $-vectors. Therefore, $ \mathcal{P} $ satisfies exchange relations.

        Now we prove the ``only if" part by contradiction. Suppose there exists a cycle $ \gamma $  on certain fusion graph $ \Gamma_{k_0} $. We fix an edge $ (i,j) $ in $ \gamma $, and label the coefficients of $ E_2 $-vectors and $ E_3 $-vectors on vertices of $ \gamma $ according to Equation \ref{labeq}. We neglect the edge $ (i,j) $ and traverse $ \gamma $ by starting from white vertex $ i $ and stopping at black vertex $ j $ finally. Each white vertex is labelled $ a $ while each black vertex is labelled $ -a $. However, the sum of labels on $ i $ and $ j $ should be $ 1 $, which is a contradiction. 
    \end{proof}

    \begin{remark}
        This theorem provides another proof of the fact that $ \dim \mathcal{P}_{3} \le (\dim \mathcal{P}_{2})^{2} + (\dim \mathcal{P}_{2} - 1)^{2} $. The forest structure indicates that $ v - e \ge 1 $ for each fusion graph. Thus the edges in the fusion graph, the number of which is $ \dim \mathcal{P}_{3} $, is less than or equal to $ n + (n-1)(2n-1) $, if $ \dim \mathcal{P}_{2} = n $. 
    \end{remark}

    \begin{definition}
        A forest fusion graph $ \Gamma $ is a fusion graph such that every $ \Gamma_k \in \Gamma $ is a forest. 
    \end{definition}

    \begin{example}
        A subfactorizable fusion bialgebra originates from a group if and only if each fusion graph of a minimal projection is a 1-regular graph. Hence, group planar algebras are the simplest planar algebras that have forest fusion graphs.
    \end{example}

    \begin{theorem}\label{thm:parameter}
        Given a forest fusion graph $ \Gamma $ in a subfactorizable fusion bialgebra with exchange relations, the parameters $ \{a_{sk}^{ij}\}_{0\le i,j,s,k \le n}$, $\{b_{\ell m}^{ij}\}_{0\le i,j,\ell,m \le n} $ in exchange relation \ref{exr2} are determined to be $ 0 $ or $ \pm 1 $ by $ \Gamma $.
    \end{theorem}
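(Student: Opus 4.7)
The plan is to leverage the identity $E_1(i,j) = \sum_{k=0}^{n} T(i,j,k)$ arising from the resolution of identity, combined with Lemma \ref{thm:acyclic}, whose acyclicity hypothesis is now available for every $\Gamma_k$ by Theorem \ref{thm:forest}. First I would fix the pair $(i,j)$ and examine each summand $T(i,j,k)$. If $N_{kj}^{i}=0$, then $T(i,j,k)=0$ and the term contributes nothing. Otherwise the edge $(i,j)$ lies in a tree component of $\Gamma_k$, and Formula \ref{maineq1} gives
$$T(i,j,k) \;=\; \sum_{\substack{s\in C_1^{(k)}\\ s\ \text{white}}} E_2(s,k)\;-\;\sum_{\substack{\ell\in C_1^{(k)}\\ \ell\ \text{black}}} E_3(\ell,k),$$
where $C_1^{(k)}$ denotes the connected component of $i$ in $\Gamma_k$ after deleting the edge $(i,j)$. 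Summing over $k$ then yields an explicit expansion of $E_1(i,j)$ as an integer combination of $E_2$- and $E_3$-vectors whose coefficients all lie in $\{0,\pm 1\}$.

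Next I would match this expansion against Exchange Relation \ref{exr2} to read off
$$a_{st}^{ij} \;=\; \begin{cases} 1 & \text{if }N_{tj}^{i}\neq 0\text{ and }s\in C_1^{(t)},\\[2pt] 0 & \text{otherwise},\end{cases}\qquad b_{\ell m}^{ij} \;=\; \begin{cases} -1 & \text{if }N_{mj}^{i}\neq 0\text{ and }\ell\in C_1^{(m)},\\[2pt] 0 & \text{otherwise}.\end{cases}$$
Both formulas depend solely on the combinatorics of $\Gamma$ and manifestly take values in $\{0,\pm 1\}$, which is the conclusion of the theorem. The companion Formula \ref{maineq2} gives an equivalent alternative assignment, with $C_1^{(k)}$ replaced by the opposite side $C_2^{(k)}$ and every sign flipped, corresponding to propagating the alternating labelling starting from $j$ rather than from $i$.

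The main obstacle is to justify that the parameters are truly \emph{determined} by $\Gamma$ rather than merely \emph{choosable} in $\{0,\pm 1\}$. A priori the linear constraint \ref{3face_1} admits a one-parameter gauge on each connected component of each $\Gamma_k$, so some canonical pinning is required. I plan to close this by observing that the forest decomposition of Lemma \ref{thm:acyclic} furnishes the unique solution in which $a_{st}^{ij}$ vanishes whenever $s$ lies outside the connected component of $i$ in $\Gamma_t$ (after deleting the edge $(i,j)$ when present); this support condition is the natural one compatible with the sparse structure of $\Gamma$. With this convention in place, the tree structure rigidly propagates the base constraint $a_{ik}^{ij}+b_{jk}^{ij}=1$ along each path in the forest via Equation \ref{3face_1}, alternating white and black labels to produce precisely the $\pm 1$ values above. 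This is exactly the reduction exploited in Step 2 of the classification scheme, collapsing the $O(n^4)$ unknowns to the $n$ quantum dimensions.
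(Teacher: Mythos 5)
Your proposal is correct and follows essentially the same route as the paper: both rest on Lemma \ref{thm:acyclic} (Formulas \ref{maineq1} and \ref{maineq2}) to express each $T(i,j,k)$ as a $\{0,\pm1\}$-combination of $E_2$- and $E_3$-vectors and then read off $a_{st}^{ij}$, $b_{\ell m}^{ij}$ by matching against Exchange Relation \ref{exr2}, the paper doing the matching componentwise after multiplying by $p_k$ from below while you sum over $k$. Your explicit acknowledgment of the one-parameter gauge on each tree component and the support convention pinning it down is a slightly more careful rendering of the paper's "canonical parameters" assignment, but it is the same argument.
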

    \begin{proof}
        Given a forest fusion graph $ \Gamma $, multiplying Formula \ref{exr2} by $ p_k $ from below gives 
        \begin{align}
            T(i,j,k) = \sum_{s=0}^{n} a_{sk}^{ij}E_2(s,k) + \sum_{\ell = 0}^{n} b_{\ell k}^{ij}E_3(\ell,k).
        \end{align}
        We compare this formula with Formula \ref{maineq1}. To make ensure the right hand side of both formulas are formally the same, the coefficients $ a_{sk}^{ij}E_2(s,k) $ and $ b_{\ell k}^{ij} $ are determined to be $ 0 $ or $ \pm 1 $. Similar result holds for Formula \ref{maineq2}.

        Given a forest fusion graph $ \Gamma $, we set the canonical parameters according to Formula \ref{maineq1} and Formula \ref{maineq2}.
        
        If $ T(i,j,k) = 0 $, set $ a_{sk}^{ij} = 0, b_{sk}^{ij} = 0 $ for $ 0 \le s \le n $.

        If $ T(i,j,k) \ne 0 $, there are $ 2 $ cases.
        \begin{itemize}
            \item $ k = 0 $. $ i $ and $ j $ should be the same. Set $ a_{i0}^{ii} = 1, b_{i0}^{ii} = 0 $ for $ 0 \le i \le n $.
            \item $ k \ne 0 $.
            \begin{itemize}
                \item  If either $ i $ or $ j $ is 0, the other should be equal to $ k $ or $ \overline{k} $. Set $ a_{0k}^{0\overline{k}} = 1, b_{kk}^{0\overline{k}} = 0, a_{kk}^{k0} = 0, b_{0k}^{k0} = 1 $ for $ 1 \le k \le n $. 
                \item If both $ i $ and $ j $ are non-zero, set $ a_{sk}^{ij} = 1, b_{\ell k}^{ij} = -1 $ for $ s, \ell $ in Formula \ref{maineq1}.
            \end{itemize} 
        \end{itemize}
        All other parameters are set to $ 0 $.
    \end{proof}

    \begin{theorem}
        Given a forest fusion graph, the fusion coefficient $ N_{ij}^{k} $ is a sum of $ d_i $'s with positive or negative signs.
    \end{theorem}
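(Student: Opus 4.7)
The plan is to derive this statement as an immediate corollary of Theorem \ref{thm:parameter} together with the structure equation \ref{E_2face_1}, which asserts
\[
N_{i\overline{j}}^{k} \;=\; \sum_{s=0}^{n} \bigl(a_{sk}^{ij}+b_{sk}^{ij}\bigr)\, d_s.
\]
Since the dual map is an involution on indices, substituting $j \mapsto \overline{j}$ yields
\[
N_{ij}^{k} \;=\; \sum_{s=0}^{n} \bigl(a_{sk}^{i\overline{j}}+b_{sk}^{i\overline{j}}\bigr)\, d_s,
\]
so everything reduces to showing that each coefficient $\epsilon_s := a_{sk}^{i\overline{j}}+b_{sk}^{i\overline{j}}$ lies in $\{-1,0,1\}$, i.e.\ is a genuine sign (or zero).

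To establish this, I would read the canonical assignments produced in the proof of Theorem \ref{thm:parameter} off the formulas \ref{maineq1} and \ref{maineq2}. The point is that the $E_2$-coefficients $a_{sk}^{ij}$ arise from white vertices of the component $C_1$ with sign $+1$, while the $E_3$-coefficients $b_{sk}^{ij}$ arise from black vertices of $C_1$ with sign $-1$; all other canonical parameters are $0$. Therefore, for any fixed triple $(i,j,k)$, one has $a_{sk}^{ij}\in\{0,1\}$ and $b_{sk}^{ij}\in\{-1,0\}$ for every $s$, so their sum automatically lies in $\{-1,0,1\}$. Applying this after the substitution $j\mapsto\overline{j}$ gives the desired signed sum representation for $N_{ij}^{k}$.

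The remaining bookkeeping concerns degenerate cases, which are the main (minor) obstacle: when $T(i,\overline{j},k)=0$ the entire contribution vanishes; when one of $i,\overline{j},k$ equals $0$ the canonical parameters are given by the explicit small-case assignments listed in the proof of Theorem \ref{thm:parameter}, and one verifies directly in each such case that $a$ and $b$ take values $0$ or $\pm 1$ with the correct signs. Since these cases are finite in number and explicitly tabulated, they introduce no new difficulty, and the proof amounts to combining the formula \ref{E_2face_1} with the sign analysis above.
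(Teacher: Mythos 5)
Your proposal is correct and follows exactly the paper's route: the paper's proof is precisely the one-line observation that the claim follows from Theorem \ref{thm:parameter} (the parameters are $0$ or $\pm 1$) combined with Formula \ref{E_2face_1}, and your write-up simply supplies the sign bookkeeping that the paper leaves implicit.
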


    \begin{proof}
        The conclusion follows from Theorem \ref{thm:parameter} and Formula \ref{E_2face_1}.
    \end{proof}

    \begin{remark}
        Given a forest fusion graph, the exchange relations are precisely determined, enabling us to deduce the parameters directly from the graphs. The essential variables of structure equations are quantum dimensions $ \{d_i\}_{0\le i\le n} $. This represents a significant reduction in complexity, decreasing from \( O(n^4) \) to just \( n + 1 \). Consequently, the structure equations can be expressed as linear and quadratic equations on these variables. The number of linear equations far exceeds the number of variables. Thus it is easy to solve structure equations for each forest fusion graph by first solving linear equations and then verifying quadratic equations. 
    \end{remark}
    
    From the perspective of algebraic geometry, we decompose a complicated algebraic variety into many simpler subvarieties with forest structure. Now the complexity of exchange relations transits from analyzing a complicated algebraic variety to finding out all forest fusion graphs.

    Note that the total number of structure equations is $ O(n^6) $, so this is an overdetermined system, which has no solution in general. This indicates the presence of obstructions to the solutions of structure equations, reflecting the inherent rigidity of subfactors.

    \section{Classification of exchange relation planar algebras}

    The classification problem has been significantly simplified, however, the exponential number of forest fusion graphs poses a considerable challenge to humans. Therefore, we need computers to help enumerate forest fusion graphs. We give an equivalent definition of fusion graph, the indicator function of fusion coefficients. The advantage of indicator function is that it can be encoded into a binary string, thereby facilitating our computer implementation.

    \subsection{Indicator function of fusion coefficients}
    Let $ L_2 = \{p_i\}_{0\le i\le n} $ be the generator set and identify $ p_i $ with $ i $. The fusion coefficients $ N_{kj}^{i} $ can be viewed as a 3-tensor with $ L_2 $ as the index set. We use the indicator functions to record the zeroes of fusion coefficients.
    \begin{definition}
        Let $ \varphi: L_2\times L_2\times L_2\to \{0,1\} $ be the indicator function of $ L_2\times L_2\times L_2 $. We use $ \varphi(p_i,p_j,p_k) $ to indicate whether $ N_{kj}^{i} $ is zero or not, which corresponds to whether the edge $ (i,j) $ in $ \Gamma_k $ exists or not.
    \end{definition}

    \begin{definition}
        A unital dual-preserving automorphism $ g $ on the generator set $ L_2 $ is a bijection such that
        \begin{itemize}
            \item $ g(\overline{p}) = \overline{g(p)} $ for any $ p\in L_2 $,
            \item $ g(p_0) = p_0 $.
        \end{itemize} 
    \end{definition}

    \begin{proposition}
        Suppose there are $ m $ pairs of dual minimal projections in $ L_2 = \{p_i\}_{0\le i\le n} $. The group $ G $ of unital dual-preserving automorphisms on $ L_2 $ is isomorphic to $ S_m\times \mathbb{Z}_2^{\oplus m}\times S_{n-2m-1} $.
    \end{proposition}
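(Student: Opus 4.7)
The plan is to decompose any $g\in G$ according to its action on the three natural invariant subsets of $L_2$. By the unital condition $g(p_0)=p_0$, so $g$ restricts to a bijection of $L_2\setminus\{p_0\}$, and the commutation $g(\bar p)=\overline{g(p)}$ forces $g$ to send self-dual projections to self-dual projections and dual pairs $\{p,\bar p\}$ to dual pairs. Writing $F$ for the set of self-dual non-identity minimal projections, this partitions $L_2\setminus\{p_0\}$ into $F$ together with the $m$ dual pairs $P_1,\dots,P_m$.

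First I would verify these invariance properties by direct inspection of the two axioms defining a unital dual-preserving automorphism, noting that $g(p)=\overline{g(p)}$ whenever $p=\bar p$. Next I would construct a map $\Phi:G\to S_m\times\mathbb{Z}_2^{\oplus m}\times S_{|F|}$ as follows: fix once and for all a labeling $P_k=\{p_{i_k},\bar p_{i_k}\}$ for $1\le k\le m$, and send $g$ to the triple $(\sigma,\epsilon,g|_F)$ where $\sigma(k)$ is determined by $g(P_k)=P_{\sigma(k)}$ and $\epsilon_k\in\{0,1\}$ records whether $g(p_{i_k})$ equals $p_{i_{\sigma(k)}}$ or $\bar p_{i_{\sigma(k)}}$. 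Surjectivity is clear because any triple extends uniquely to a permutation of $L_2$ that fixes $p_0$ and commutes with $\overline{\phantom{p}}$; injectivity is immediate because $g$ is determined by its values on the minimal projections.

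Finally I would verify that $\Phi$ intertwines the group operations so that $G$ indeed decomposes as the claimed product, and that the three factors act independently: permutations supported on $F$ commute with permutations supported on the dual pairs since their supports are disjoint, and within the dual pairs the pair-permutation data and the swap data are chosen via the fixed labeling so that their composition rules are coordinatewise. The main subtlety — and the step I expect to require the most care — is exactly this last point: a priori the $S_m$ factor and the $\mathbb{Z}_2^{\oplus m}$ factor combine via a wreath-product twist, and the chosen parametrization of $\epsilon$ with respect to the fixed labeling must be designed to absorb that twist so that the product structure on $G$ is genuinely the direct product. Once this book-keeping is carried out, the isomorphism follows at once.
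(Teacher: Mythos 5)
Your decomposition is exactly the paper's: fix $p_0$, note that $g$ permutes the self-dual projections among themselves and permutes the $m$ dual pairs while possibly swapping the two members of each pair. The paper's own proof stops precisely at the point you single out as delicate, counting ``two choices'' per pair and concluding. You have correctly located the real issue, but the resolution you propose cannot work: the subgroup of $G$ supported on the $2m$ paired projections is the hyperoctahedral group $\mathbb{Z}_2\wr S_m=\mathbb{Z}_2^{\oplus m}\rtimes S_m$, and for $m\ge 2$ this is not isomorphic to the direct product $\mathbb{Z}_2^{\oplus m}\times S_m$ under any choice of labeling. For $m=2$ the wreath product is the dihedral group of order $8$ (non-abelian, center $\mathbb{Z}_2$), whereas the direct product is $\mathbb{Z}_2^{\oplus 3}$ (abelian); in general the centers differ for all $m\ge 2$. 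No parametrization of $\epsilon$ can absorb the twist, because the twist changes the isomorphism type. The correct description is $G\cong(\mathbb{Z}_2\wr S_m)\times S_{n-2m}$; the direct-product form in the statement agrees with this only in order, $2^m\,m!\,(n-2m)!$, which is in fact all that is used downstream, since $G$ enters the classification only through its action on indicator functions for orbit counting. Note also that your $S_{|F|}$ is $S_{n-2m}$ rather than the stated $S_{n-2m-1}$: of the $n+1$ elements of $L_2$, one is $p_0$ and $2m$ lie in dual pairs, leaving $n-2m$ free self-dual projections; this matches the paper's own use of $\mathbb{Z}_2\times S_2$ for $n=4$, $m=1$ in the five-dimensional classification.

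So at the last step you should trust your own instinct over the target statement: rather than engineering $\Phi$ to land in the direct product, prove the wreath-product description --- your $\Phi$ already does this once you record the multiplication rule $(\epsilon,\sigma)(\epsilon',\sigma')=(\epsilon\cdot\sigma(\epsilon'),\sigma\sigma')$ on the middle two factors --- or simply observe that only $|G|$ and the $G$-action are needed for the sieving argument. Everything before that point (invariance of the three blocks, surjectivity, injectivity) is fine and coincides with the paper's argument.
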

    \begin{proof}
        Let $ g\in G $ and identify $ p_i $ with $ i $, then $ g $ must fix $ 0 $. $ g $ should map a pair of dual projections to a pair of dual projections and map a self-dual projection to a self-dual projection. When $ G $ acts on self-dual projections, it is isomorphic to the permutation group on $ n-2m-1 $ objects. When $ g $ acts on $ m $ pairs of dual projections, it is a permutation on $ m $ objects if regarding a pair of dual projections as an object. However, for each $ p $ in one pair, $ g(p) $ has two choices in another pair. Thus the conclusion follows.
    \end{proof}

    \begin{definition}
        Let $ \varphi $ be an indicator function on $ L_2\times L_2\times L_2 $. 
        The group $ G $ of unital dual-preserving automorphisms acts naturally on $ \varphi $ by $ (g\varphi)(i,j,k) := \varphi(g^{-1}i,g^{-1}j,g^{-1}k) $. Two indicator functions are called isomorphic if they are in the same $ G $-orbit.
    \end{definition}

    \begin{definition}
        We define an equivalence relation on $ L_2\times L_2\times L_2 $ by $ (i_1,j_1,k_1) \sim (i_2,j_2,k_2) $ if $ N_{k_1j_1}^{i_1} = 0 \Leftrightarrow N_{k_2j_2}^{i_2} = 0 $. An indication function on $ L_2\times L_2\times L_2 $ is called admissible if it is compatible with the equivalence relation induced by Frobenius reciprocity, unital structure, and possible commutativity.
    \end{definition}
    \begin{remark}
        The unital structure means a fusion coefficient $ N_{kj}^{i} $ is clear if either one of $ i,j,k $ is $ 0 $.
        An admissible indicator function on $ L_2\times L_2\times L_2 $ is naturally an indicator function on $ L_2\times L_2\times L_2/\sim $. Note that $ G $-action is also compatible with Frobenius reciprocity and commutativity, so $ G $ acts naturally on $ L_2\times L_2\times L_2/\sim $, and thus on admissible indicator functions.
    \end{remark}

    \begin{proposition}
        Let $ \mathcal{B} $ be a fusion bialgebra having $ \{N_{kj}^{i}\}_{0\le i,j,k\le n} $ as fusion coefficients of $ L_2 $. Then for $ g\in G $, $ L_2 $ with fusion coefficients $ \{N_{g(k) g(j)}^{g(i)}\} $ is also a fusion bialgebra $ \mathcal{B}_g $ isomorphic to $ \mathcal{B} $. Furthermore, if $ \mathcal{B} $ is subfactorizable and has exchange relations, so is $ \mathcal{B}_g $.
    \end{proposition}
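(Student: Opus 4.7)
The plan is to build a single $*$-algebra isomorphism out of the permutation data $g$, and then read off every assertion of the proposition.

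Define $\Phi \colon \mathcal{B}_g \to \mathcal{B}$ by $\Phi(p_i) := p_{g(i)}$ on the basis and extend $\mathbb{C}$-linearly. Writing $(N_g)_{kj}^i := N_{g(k)g(j)}^{g(i)}$ for the new fusion coefficients, the substitution $i' = g(i)$ gives
\begin{align*}
\Phi(p_j *_g p_k) \;=\; \sum_i (N_g)_{kj}^i\, p_{g(i)} \;=\; \sum_{i'} N_{g(k)g(j)}^{i'}\, p_{i'} \;=\; p_{g(j)} * p_{g(k)} \;=\; \Phi(p_j) * \Phi(p_k),
\end{align*}
so $\Phi$ intertwines the two multiplications. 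Using $g(\bar i) = \overline{g(i)}$, the identity $\Phi(p_i^*) = p_{g(\bar i)} = p_{\overline{g(i)}} = \Phi(p_i)^*$ makes $\Phi$ a $*$-homomorphism, and it is clearly a bijection of the common underlying vector space. Associativity and $*$-compatibility of $*_g$ are then inherited from those of $*$ via $\Phi^{-1}$, so $\mathcal{B}_g$ is automatically a unital $*$-algebra.

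Next I would check the three fusion bialgebra axioms for $\mathcal{B}_g$. Axiom (1) is immediate because the linear basis $\{p_i\}$ is unchanged. Axiom (2) holds since $(N_g)_{kj}^i = N_{g(k)g(j)}^{g(i)} \ge 0$ is just a permutation of the original coefficients. For axiom (3), the same index map $k \mapsto \bar k$ serves as the required involution: because $g$ fixes $0$ and commutes with the dual, one has $(N_g)_{jk}^0 = N_{g(j)g(k)}^{g(0)} = N_{g(j)g(k)}^0 = \delta_{g(j), \overline{g(k)}} = \delta_{g(j), g(\bar k)} = \delta_{j, \bar k}$, and $p_k^* = p_{\bar k}$ is governed by the same rule in both algebras. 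Thus $\mathcal{B}_g$ is a fusion bialgebra, and $\Phi$ is an isomorphism of fusion bialgebras (a $*$-algebra isomorphism sending basis to basis).

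For the final claim, suppose $\mathcal{B}$ arises as $\mathcal{P}_{2,+}$ of an irreducible subfactor planar algebra $\mathcal{P}$ with the $\mathbb{R}_{\ge 0}$-basis realized by rescaled minimal projections as in Theorem \ref{fub}. Via $\Phi^{-1}$, the basis of $\mathcal{B}_g$ is just a relabeling of the same minimal projections of $\mathcal{P}_{2,+}$; therefore the same planar algebra $\mathcal{P}$ realizes $\mathcal{B}_g$, so $\mathcal{B}_g$ is subfactorizable. The exchange relation in Definition \ref{Def: Exchange Relation} is a diagrammatic identity among the generator set $\{p_i\}$ viewed as a set, and is insensitive to any enumeration; hence if $\mathcal{B}$ has an exchange relation so does $\mathcal{B}_g$. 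The only point that needs genuine attention is that $g$ must fix $0$ and commute with the dual, so that axiom (3) survives the relabeling; this is exactly the definition of the group $G$ acting here, and every remaining step is bookkeeping.
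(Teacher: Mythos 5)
Your proof is correct and follows the same route as the paper: the map $p_i \mapsto p_{g(i)}$ is exactly the isomorphism the paper uses, and your verification of the $*$-algebra structure and the fusion bialgebra axioms simply fills in details the paper leaves implicit. For the final claim the paper cites Theorem 2.26 of the Exchange paper (a bialgebra isomorphism lifts to a planar algebra isomorphism) where you argue directly that the same planar algebra realizes the relabeled basis; both are valid and amount to the same observation.
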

    \begin{proof}
        Mapping $ p_k $ in $ \mathcal{B}_g $ to $ g(p_k) $ in $ \mathcal{B} $ gives an isomorphism between $ \mathcal{B}_g $ and $ \mathcal{B} $. Theorem 2.26 in \cite{Liu2016Exchange} asserts that this bialgebra isomorphism will give a planar algebra isomorphism.
    \end{proof}
    \begin{remark}
        This proposition implies that we only need to select one indicator function in each $ G $-orbit to give a forest fusion graph and solve the structure equations. 
    \end{remark}

    By definition of indicator functions, there are $ 2^{m} $ admissible indicator functions in total if $ |L_2\times L_2\times L_2/\sim| = m $. Despite of the exponential growth, $ G $-action and forest structure significantly reduces the number, when $ \dim \mathcal{P}_{2,\pm} $ is small. Note that the value of $ N_{kj}^{i} $ is clear if one of $ i,j,k $ is 0, so we do not need to consider such fusion coefficients.

    \subsection{Analytic criteria}

    Given a fusion bialgebra, we propose two novel analytic criteria to detect whether it originates from an exchange relation planar algebra. The criteria are called analytic because they employ the positivity of fusion coefficients, which reflects the unitary property of subfactors.

    According to Equation \ref{associativity}, we define the following formula:
    \begin{align}
        f(i,j,k,\ell) = \sum_{s=0}^{n} N_{ij}^{s}N_{sk}^{\ell} - \sum_{s=0}^{n} N_{is}^{\ell}N_{jk}^{s}.
    \end{align}
    View $ f $ as a multivariate polynomial on fusion coefficients, we have the following positivity criterion.
    \begin{proposition}[Associative Positivity Criterion]
        Given an indicator function $ \varphi $ of a fusion bialgebra $ \mathcal{B} $, if there exists $ i,j,k,\ell $ such that  every monomial in $ f(i,j,k,\ell) $ has the same sign, then $ \mathcal{B} $ is not subfactorizable.
    \end{proposition}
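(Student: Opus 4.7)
The plan is a short proof by contradiction exploiting associativity together with the non-negativity of fusion coefficients. Assume $\mathcal{B}$ is subfactorizable, so that the fusion coefficients $\{N_{kj}^{i}\}$ arise from an irreducible subfactor planar algebra and in particular satisfy the associativity equation (Equation \ref{associativity}) established in Theorem \ref{thm:polyeq}. Rewriting that equation gives $f(i,j,k,\ell)=0$ for every choice of indices.

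Next, I would unpack what the indicator function $\varphi$ tells us about the monomials of $f(i,j,k,\ell)$. Each monomial is either of the form $N_{ij}^{s}N_{sk}^{\ell}$ (the positive part of $f$) or $N_{is}^{\ell}N_{jk}^{s}$ (the negative part of $f$), and $\varphi$ records exactly which of the factors $N_{\bullet}^{\bullet}$ vanish; hence $\varphi$ determines which monomials in $f$ are identically zero and which are nonzero. By the definition of an $\mathbb{R}_{\geq 0}$-basis, every $N_{kj}^{i}$ is non-negative, and by Proposition \ref{basic_cons} all non-vanishing fusion coefficients are strictly positive, so every non-vanishing monomial in the two sums is strictly positive.

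The hypothesis is that, for some $(i,j,k,\ell)$, all non-vanishing monomials in $f(i,j,k,\ell)$ come from the same sum (all positive, or all negative). In that situation the value of $f(i,j,k,\ell)$ is a sum of strictly positive numbers (respectively, the negative of such a sum), and in particular $f(i,j,k,\ell)\neq 0$, contradicting associativity. Hence $\mathcal{B}$ cannot be subfactorizable.

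There is no real obstacle: the only subtlety worth noting explicitly is that the criterion is vacuous unless at least one monomial in $f(i,j,k,\ell)$ is nonzero, so one should phrase the hypothesis as ``every monomial has the same sign, and at least one is nonzero,'' which is clearly the intended reading. The key conceptual point is that associativity is an \emph{equality} between two sums of non-negative terms, so any forest fusion graph whose indicator pattern forces one of these sums to vanish while the other does not is immediately ruled out without solving any equations.
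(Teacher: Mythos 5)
Your proof is correct and follows essentially the same route as the paper's: associativity of convolution (Equation \ref{associativity}) forces $f(i,j,k,\ell)=0$ for a subfactorizable $\mathcal{B}$, while non-negativity of the fusion coefficients together with the same-sign hypothesis forces every monomial to vanish, contradicting what the indicator function records. Your explicit remark that the hypothesis should be read as requiring at least one nonzero monomial is a reasonable clarification of the paper's terser phrasing, but the underlying argument is identical.
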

    \begin{proof}
        If every monomial in $ f(i,j,k,\ell) $ has the same sign, then the equation $ f(i,j,k,\ell) = 0 $ can only have zero solutions due to positivity of fusion coefficients. However, this contradicts to the assumption on the indicator function $ \varphi $.
    \end{proof}

    \begin{definition}
        Let $ \mathcal{B} $ be a subfactorizable fusion bialgebra with minimal projections $ L_2 $. We define an antisymmetric and transitive relation $ \leqq $ on $ L_2 $ such that $ p_i \leqq p_j $ if $ p_i * p_j $ and $ p_j * p_i $ is are scalar multiples of $ p_j $. 
    \end{definition}

    \begin{theorem}\label{thm:FreeProduct}
        Suppose $ \mathcal{B} $ is a subfactorizable fusion bialgebra having minimal projections $ L_2 $ as generators of the corresponding subfactor planar algebra $ \mathcal{P} $.
        Then $ \mathcal{B} $ originates from a free product of subfactor planar algebras if and only if $ L_2 $ can be partitioned into two non-empty subsets $ S $ and $ R $ such that $ p_i \leqq p_k $ for any $ p_i \in S, p_k \in R $.
    \end{theorem}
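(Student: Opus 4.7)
The plan is to exploit the skein-theoretic structure of the Bisch-Jones free product $\mathcal{P}_1 * \mathcal{P}_2$, whose 2-box space decomposes as the 2-box spaces of $\mathcal{P}_1$ and $\mathcal{P}_2$ glued along the shared Jones projection, and to match this decomposition with the combinatorial partition condition on $L_2$.

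For the forward direction, suppose $\mathcal{P} = \mathcal{P}_1 * \mathcal{P}_2$. Then the non-trivial minimal projections of $\mathcal{P}_{2,+}$ split as $L_2^{(1)} \sqcup L_2^{(2)}$ coming from the two factors. Setting $S := \{p_0\} \cup L_2^{(1)}$ and $R := L_2^{(2)}$, the claim $p_i \leqq p_k$ for $p_i \in S$, $p_k \in R$ is checked as follows. When $p_i = p_0$, the identity $p_0 * x = \tfrac{1}{\delta} x$ (which follows from $\mathfrak{F}_s(p_0) = \tfrac{1}{\delta}\, 1_{\mathcal{P}_{2,-}}$) gives $p_0 * p_k = \tfrac{1}{\delta} p_k$ immediately. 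When $p_i \in L_2^{(1)}$, I would evaluate the horizontal convolution tangle for $p_i * p_k$ using the labelling rules of the free product: because $p_i$ and $p_k$ live in opposite factors, the strands joining them reduce via the free-product skein relations, collapsing any sub-tangle labelled purely in $\mathcal{P}_1$ into a trace factor of $p_i$ and leaving a residual 2-box proportional to $p_k$. The relation $p_k * p_i \in \mathbb{C} p_k$ follows by symmetry.

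For the reverse direction, suppose the partition $L_2 = S \sqcup R$ exists. I first observe that $S$ and $R$ are each closed under duality: if $p_\ell \in R$ with $\bar p_\ell \in S$, then $\bar p_\ell \leqq p_\ell$ would force $\bar p_\ell * p_\ell \in \mathbb{C} p_\ell$, contradicting $N_{\bar \ell, \ell}^{0} = d_\ell \ne 0$ from Proposition \ref{basic_cons}. Combining duality-closure with Frobenius reciprocity, $S$ and $R$ are each closed under convolution, so $\mathcal{A}_S := \mathrm{span}(S)$ and $\mathcal{A}_R := \mathrm{span}(R \cup \{p_0\})$ are $*$-subalgebras of $\mathcal{P}_{2,+}$ under both multiplication and convolution. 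Let $\mathcal{P}_S$ and $\mathcal{P}_R$ be the subfactor planar algebras they generate inside $\mathcal{P}$. Using the cross-convolution relations $p_i * p_k, p_k * p_i \in \mathbb{C} p_k$ for $p_i \in S, p_k \in R$, I would show by induction on the number of 2-box crossings that every tangle labelled in $S \cup R$ decomposes as an alternating composition of purely $S$-labelled and purely $R$-labelled sub-tangles connected by scalar factors; this is precisely the skein presentation of a free product, so the universal property identifies $\mathcal{P} \cong \mathcal{P}_S * \mathcal{P}_R$.

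The hardest step is the inductive skein reduction in the reverse direction: one must carefully track how exchanging adjacent $S$- and $R$-labelled crossings in a general tangle produces scalar corrections consistent with the partition, and propagate these through all box spaces using the exchange relation. A subsidiary point is confirming that $\mathcal{P}_S$ and $\mathcal{P}_R$ inherit the subfactor (positivity) property from $\mathcal{P}$ rather than merely being sub-planar algebras, so that their free product is well-defined in the sense of subfactor theory.
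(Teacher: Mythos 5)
Your reverse direction contains a false intermediate claim that the rest of the argument leans on. You assert that, after checking duality-closure, ``$S$ and $R$ are each closed under convolution,'' so that $\mathcal{A}_R=\mathrm{span}(R\cup\{p_0\})$ is a convolution subalgebra generating a sub-planar algebra $\mathcal{P}_R$. This fails already for $TL*TL$ with $L_2=\{p_0,p_1,p_2\}$, $S=\{p_0,p_1\}$, $R=\{p_2\}$: Frobenius reciprocity gives $N_{22}^{1}d_1=N_{12}^{2}d_2$, and $p_1\leqq p_2$ forces $p_1*p_2=d_1p_2$ with $d_1>0$, hence $N_{22}^{1}=d_2>0$ and $p_2*p_2$ contains $p_1\in S$. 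Only $S$ is convolution-closed (and even that requires an argument --- the paper derives it by contradiction from $p_i*p_j*p_{\overline{r}}=C_1p_{\overline{r}}$ and $p_{\overline{r}}*p_i*p_j=C_2p_{\overline{r}}$ forcing $p_r\leqq p_{\overline{r}}$). The second free factor is not a sub-planar algebra of $\mathcal{P}$ spanned by $R\cup\{p_0\}$ in this naive sense, so the subsequent ``universal property of the skein presentation'' step has no foundation; you also leave that inductive tangle reduction, which you correctly flag as the hardest step, entirely unexecuted.

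The missing idea is the biprojection. The paper's proof sets $Q=\sum_{p_i\in S}p_i$, uses the convolution-closure of $S$ to show $Q*Q$ is a multiple of $Q$, and then verifies $QxQ=x$ for $x\in S$ and $Q*x*Q=(\mathrm{tr}(Q)/\delta)^2x$ for $x\in R$; the known characterization of free products via biprojections (intermediate subfactors, cf.\ \cite{Liu2016Exchange}) then finishes both directions without any induction on tangles. Your forward direction is compatible with this in spirit, but the tangle-collapsing argument for $p_i*p_k\in\mathbb{C}p_k$ is only sketched; it too is cleanest as a one-line consequence of the biprojection identities. Your duality-closure observation for $R$ via $N_{\overline{\ell},\ell}^{0}=d_\ell\neq 0$ is correct and is exactly the step the paper uses, but as it stands the proposal does not constitute a proof of the ``if'' direction.
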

    \begin{proof}
        The ``only if" part is easy. In this case, there exists a non-trivial biprojection $ Q $ such that $ QxQ = x $ or $ Q*x*Q = (\frac{\text{tr}(Q)}{\delta})^2 x $ for $ x \in L_2 $. Then it suffices to let $ S $ be the subset of $ L_2 $ such that $ QxQ = x $, and let $ R $ be the subset of $ L_2 $ such that $ Q*x*Q = (\frac{\text{tr}(Q)}{\delta})^2 x $.
        
        Now we prove the ``if" part. 
        \textbf{Claim:} The convolution is closed in $ S $. 

        Suppose not. Then there exists $ p_i, p_j \in S $ such that $ p_i*p_j $ contains $ p_r $ where $ p_r \in R $. Then $ p_{\overline{r}} \in R $ since $ p_{\overline{r}} \nleqq p_r $. Note that $ p_i*p_j*p_{\overline{r}} = C_1p_{\overline{r}} $ and $ p_{\overline{r}}*p_i*p_j = C_2p_{\overline{r}} $ where $ C_1,C_2 $ are scalars. This implies that $ p_r \leqq p_{\overline{r}} $, which is a contradiction. 

        Let $ Q = \sum_{p_i\in S}^{} p_i  $. Then $ Q $ is a biprojection. Furthermore, $ QxQ = x $ for $ x \in S $ and $ Q*x*Q = (\frac{\text{tr}(Q)}{\delta})^2 x $ for $ x \in R $. Thus $ \mathcal{B} $ originates from a free product of subfactor planar algebras.
    \end{proof}

    We provide Algorithm \ref{algorithm:genS} to generate the set $ S $. First, initialize two empty sets $ S $ and $ R $.
    Put $ p_0 $ in $ S $. 
    Second, find all $ p_i $ such that if $ p_j \leqq p_i $, then $ j = 0 $. Put such $ p_i $ in S.
    Third, for $ x\in L_2\setminus S $, check whether $ p_i \leqq x $ for any $ p_i\in S $. If so, put $ x $ in $ R $. Otherwise, put $ x $ in $ S $, and move $ p_r $ from $ R $ to $ S $ if $ x \nleqq p_r $. Finally, we obtain a set $ S $ such that $ p_i \leqq p_k $ for any $ p_i \in S, p_k \in L_2\setminus S $.

    \begin{algorithm}
        \caption{Generate a minimum $ S $ from $ L_2 $}\label{algorithm:genS}
        \begin{algorithmic}[1]
            \Statex \textbf{Input:} $ L_2 $ with a forest fusion graph $ \Gamma $.
            \Statex \textbf{Output:} A set $ S $.
            \State $ n = |L_2|-1 $
            \State Initialize two empty sets $ S $ and $ R $
            \State $ S $.insert($ p_0 $)  \Comment{Put $ p_0 $ in $ S $}

            \For{$ i=1,\dots, n $}
            \State Initialize an auxiliary empty set $ A_i $    
            \EndFor

            \For{$ i=1,\dots, n $} \Comment{Fill $ A_i $ with elements that are $ \leqq p_i $}
                \For{$ j=i,\dots, n $}
                \If{$ p_i \leqq p_j $}
                \State $ A_j $.insert($ p_i $)
                \ElsIf{$ p_j \leqq p_i $}
                \State $ A_i $.insert($ p_j $)
                \EndIf
                \EndFor
            \EndFor

            \For{$ i=1,\dots, n $} \Comment{Fill $ S $ with minimal elements}
                \If{$ |A_i| = 0 $}
                \State $ S $.insert($ p_i $)
                \EndIf
            \EndFor

            \For{$ p_k \in L_2\backslash S $} \Comment{Minimal extension for $ S $}
                \If{$ S\subset A_k $}
                \State $ R $.insert($ p_k $)
                \Else{ $ S $.insert($ p_k $)}
                \For{$ p_r \in R $}
                \If{ $ p_k\notin A_r $ }
                \State $ S $.insert($ p_r $)
                \EndIf
                \EndFor
                \EndIf
            \EndFor
        \end{algorithmic}
    \end{algorithm}

    \begin{theorem}[Free Product Criterion]
        $ \mathcal{B} $ originates from a free product of subfactor planar algebras if and only if $ L_2\setminus S $ is non-empty.
    \end{theorem}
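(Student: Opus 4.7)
The plan is to derive the criterion as a direct corollary of Theorem \ref{thm:FreeProduct} by proving that the set $S$ output by Algorithm \ref{algorithm:genS} is the \emph{minimum} subset of $L_2$ containing $p_0$ that satisfies the partition condition $p_i \leqq p_k$ for every $p_i \in S$ and $p_k \in L_2 \setminus S$. First I would note that any valid bipartition $(S', R')$ guaranteed by Theorem \ref{thm:FreeProduct} must place $p_0$ in $S'$: since $p_0$ acts as a scalar under convolution on every generator, $p_0 \leqq p_i$ for all $i$, and antisymmetry of $\leqq$ then forces every $p_j \in S'$ to equal $p_0$ whenever $p_0 \in R'$, collapsing $S'$ to $\emptyset$ and contradicting non-triviality.

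Next I would verify \emph{validity} and \emph{minimality} of the algorithm's output via loop invariants. For validity, I maintain the invariant that $p_s \leqq p_r$ for every current $p_s \in S$ and $p_r \in R$. Initially this is vacuous; it is preserved whenever the ``$S \subseteq A_k$'' branch inserts $p_k$ into $R$ (since every $p_s \in S$ is then $\leqq p_k$ by hypothesis), and is restored in the opposite branch because every $p_r$ with $p_k \not\leqq p_r$ is explicitly moved from $R$ back into $S$. Hence at termination $(S, L_2 \setminus S)$ satisfies the hypothesis of Theorem \ref{thm:FreeProduct} whenever $L_2 \setminus S$ is non-empty. For minimality, I show by induction on the order of insertions into $S$ that $S \subseteq S'$ for any valid $(S', R')$ with $p_0 \in S'$: a minimal-index element $p_i$ with $A_i = \emptyset$ cannot lie in $R'$, because every $p_j \in S'$ of positive index would then have to satisfy $p_j \in A_i = \emptyset$; a later insertion of $p_k$ into $S$ is triggered only when some already-placed $p_s \in S \subseteq S'$ violates $p_s \leqq p_k$, which forces $p_k \in S'$; and a displaced $p_r$ is moved from $R$ to $S$ precisely because $p_k \not\leqq p_r$, so, combined with $p_k \in S'$, the partition condition compels $p_r \in S'$.

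Combining these ingredients yields the biconditional: if $L_2 \setminus S \neq \emptyset$, then $(S, L_2 \setminus S)$ is a non-trivial valid bipartition, so Theorem \ref{thm:FreeProduct} identifies $\mathcal{B}$ as a free product; conversely, if $\mathcal{B}$ is a free product, the theorem supplies a valid bipartition $(S', R')$ with $R' \neq \emptyset$, and minimality gives $L_2 \setminus S \supseteq R' \neq \emptyset$. I expect the main obstacle to be formalizing the minimality argument through the algorithm's non-monotonic behavior — $R$ can shrink even as $S$ grows — but the invariant-based induction above is crafted precisely to certify that every element ever placed in $S$ is forced into every valid $S'$.
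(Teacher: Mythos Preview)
Your approach is exactly the paper's: the ``if'' direction is Theorem~\ref{thm:FreeProduct} applied to the output bipartition, and the ``only if'' direction is the minimality of the algorithm's $S$. The paper's own proof simply asserts that ``our algorithm generates the set $S$ with minimum cardinality'' without any further justification, so your invariant-based verification of validity and minimality actually supplies detail that the paper omits.
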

    \begin{proof}
        The ``if" part follows from Theorem \ref{thm:FreeProduct}. The ``only if" part follows from the fact that our algorithm generates the set $ S $ with minimum cardinality.
    \end{proof}

    \begin{remark}
        This is a universal criterion for detecting the free product of subfactor planar algebras generated by 2-boxes. If $ \mathcal{P}*\mathcal{Q} $ is an exchange relation planar algebra, then both $ \mathcal{P} $ and $ \mathcal{Q} $ must be exchange relation planar algebras, and the converse is also true. Thus one can determine from a forest fusion graph whether a subfactorizable fusion bialgebra $ \mathcal{B} $ with exchange relations arises from a free product of two exchange relation planar algebras. In the free product case, $ \mathcal{B} $ is clear if the classification for fewer generators is already known.
    \end{remark}

    \begin{proposition}[Tensor Product Criterion]
        Let $ \Gamma $ be a forest fusion graph. If there exists a group $ H $ and a forest fusion graph $ \widetilde{\Gamma} $ for a subfactorizable fusion bialgebra with exchange relations, such that $ \Gamma \cong \Gamma_H\otimes\widetilde{\Gamma} $ where $ \otimes $ is the matrix tensor product for fusion matrices, then $ \Gamma $ originates from a subfactorizable fusion bialgebra with exchange relations.
    \end{proposition}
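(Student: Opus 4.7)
The plan is to realize $\Gamma$ concretely as the fusion graph of a tensor product of two known subfactor planar algebras, and then invoke the forest/exchange-relation correspondence (Theorem \ref{thm:forest}) to conclude. Let $\mathcal{P}^H$ denote the (depth-$2$) group subfactor planar algebra attached to $H$, whose 2-box fusion graph is $\Gamma_H$, and let $\widetilde{\mathcal{P}}$ denote a subfactor planar algebra with exchange relations whose 2-box fusion graph is $\widetilde{\Gamma}$ (such a planar algebra exists by hypothesis on $\widetilde{\Gamma}$). Form the tensor product planar algebra
\[
    \mathcal{P} \;:=\; \mathcal{P}^{H} \otimes \widetilde{\mathcal{P}},
\]
which is again an irreducible subfactor planar algebra. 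In particular, $\mathcal{P}_{2,+}$ is commutative (as the tensor product of two commutative $C^{*}$-algebras), so by Theorem \ref{fub}, $\mathcal{P}_{2,+}$ is a subfactorizable fusion bialgebra.

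Next I would verify that the fusion graph of $\mathcal{P}_{2,+}$ is precisely $\Gamma_H \otimes \widetilde{\Gamma}$. Minimal projections of $\mathcal{P}_{2,+}$ are the tensor products $p_h \otimes p_a$, with $p_h$ a minimal projection of $\mathcal{P}^{H}_{2,+}$ and $p_a$ a minimal projection of $\widetilde{\mathcal{P}}_{2,+}$. Since convolution acts coordinate-wise on tensors, the fusion coefficients split as
\[
    N^{(i,a)}_{(k,c),(j,b)} \;=\; N^{i}_{k,j}(H)\cdot \widetilde{N}^{a}_{c,b}.
\]
Fixing a label $(k,c)$, the fusion matrix of $p_k \otimes p_c$ is the Kronecker product $N_{k}(H)\otimes \widetilde{N}_{c}$, so its adjacency (i.e.\ the $(k,c)$-th fusion graph) is the matrix tensor product of $(\Gamma_H)_k$ with $\widetilde{\Gamma}_c$. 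Ranging over all $(k,c)$ identifies the fusion graph of $\mathcal{P}_{2,+}$ with $\Gamma_H\otimes \widetilde{\Gamma}\cong \Gamma$.

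Finally, since $\Gamma$ is assumed to be a forest fusion graph, every component fusion graph of the subfactorizable fusion bialgebra $\mathcal{P}_{2,+}$ is a forest. Theorem \ref{thm:forest} then forces $\mathcal{P}$ to satisfy exchange relations, and thus $\Gamma$ originates from a subfactorizable fusion bialgebra with exchange relations, as required.

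The main technical obstacle is the identification of the fusion data of $\mathcal{P}^H\otimes\widetilde{\mathcal{P}}$ with the Kronecker product of fusion matrices, which boils down to confirming that minimal projections in the tensor product $C^*$-algebra $\mathcal{P}^H_{2,+}\otimes \widetilde{\mathcal{P}}_{2,+}$ are exactly the simple tensors $p_h\otimes p_a$ and that the string Fourier transform and the convolution of $\mathcal{P}$ are the tensor products of those on the two factors. These facts are standard for tensor products of subfactor planar algebras but must be tracked carefully so that the matrix tensor product in the definition of $\Gamma_H\otimes\widetilde{\Gamma}$ matches the fusion-bialgebra structure on $\mathcal{P}_{2,+}$; once this is in place, the rest of the argument is immediate from Theorem \ref{thm:forest}.
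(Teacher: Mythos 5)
Your proposal is correct and rests on the same underlying construction as the paper: realize $\Gamma$ as the 2-box fusion graph of the tensor product planar algebra $\mathcal{P}^{H}\otimes\widetilde{\mathcal{P}}$. The paper's proof is a one-line citation of Proposition 5.5 in \cite{Liu2016Exchange}, which directly asserts that the tensor product of a depth-2 subfactor planar algebra (such as $\mathcal{P}^H$) with an exchange relation planar algebra is again an exchange relation planar algebra; that external result already delivers the exchange relation without any further argument. You instead establish the exchange relation internally: after identifying the fusion data of $\mathcal{P}^H_{2,+}\otimes\widetilde{\mathcal{P}}_{2,+}$ with the Kronecker product $\Gamma_H\otimes\widetilde{\Gamma}\cong\Gamma$, you invoke the hypothesis that $\Gamma$ is a forest together with Theorem \ref{thm:forest}. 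This is a legitimate and more self-contained route (it avoids quoting the external proposition), at the cost of having to verify the Kronecker-product identification of fusion coefficients, which you correctly flag as the technical crux and which is standard for tensor products of planar algebras. One small caveat worth keeping in mind on your route: Theorem \ref{thm:forest} concerns fusion bialgebras that \emph{originate from} an exchange relation planar algebra, i.e.\ one generated by its 2-boxes, so strictly speaking one should pass to the planar subalgebra of $\mathcal{P}^H\otimes\widetilde{\mathcal{P}}$ generated by 2-boxes (or quote the cited proposition, which handles this); this does not affect the conclusion.
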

    \begin{proof}
        It follows from Proposition 5.5 in \cite{Liu2016Exchange}.
    \end{proof}

    The Tensor Product Criterion is trivial. We build a database for known examples of subfactorizable fusion bialgebras with exchange relations. If the dimension is a composite number, we construct all possible forest fusion graphs from tensor product construction and sieve graphs that are isomorphic to them according to above proposition.

\newpage

    \subsection{A delicate classification scheme}

    Based on forest fusion graph and analytic criteria, we propose an delicate classification scheme on subfactorizable fusion bialgebras with exchange relations. We anticipate this scheme will help explore new examples of exchange relation planar algebras.

    The proposed classification scheme consists of the following steps:
    \begin{enumerate}[label={\textbf{Step} $\mathbf{\arabic*}$.},leftmargin=9ex]
        \item List complete Consistency Equations.  (Theorem \ref{thm:evathm}, \ref{thm:conthm}, \ref{thm:polyeq}).
        \item Simplify Consistency Equations for every forest fusion graph. (Theorem \ref{thm:parameter}).
        \item \textbf{(FF)} Enumerate all Forest Fusion graphs.
        \item \textbf{(RG)} Select representative fusion graphs up to graph isomorphism.
        \item \textbf{(APC)} Sieve graphs that do not satisfy the Associative Positivity Criterion.
        \item \textbf{(FTPC)} Sieve graphs from the Free/Tensor Product of two planar algebras.
        \item Solve structure equations for each remaining fusion graph. If a solution exists, verify the unitarity condition to confirm that it arises from a subfactor planar algebra.
    \end{enumerate}

    Theoretical results in Step 1 and 2 have been established. Step 3 to 7 have been implemented by our automated computer program \textbf{FuBi} except the verification for unitarity. 
    
    In Step 3, We encode admissible indicator functions, which are equivalent to fusion graphs, into binary strings. \textbf{FuBi} will traverse over all possible binary strings to single out forest fusion graphs. From Step 4 to 6, \textbf{FuBi} applies various criteria to sieve graphs based on the binary string. For every remaining forest fusion graph, \textbf{FuBi} solve its corresponding structure equations derived from Step 2. If a solution exists, the consistency condition will be satisfied, and we need to verify the unitarity condition manually.

    \subsection{Classification for 5-dimension}

    In this section, we follow our classification program to classify subfactorizable fusion bialgebras $ \mathcal{B} $ with $ \dim \mathcal{B} = 5 $. We first need to determine the number of duals in  $ L_2=\{p_{0}, p_{1}, p_2, p_3, p_4 \} $. There are three different cases: 
    \begin{enumerate}
        \item[(1)] $ \overline{p_{1}}=p_{1}, \overline{p_{2}}=p_{2}, \overline{p_{3}}=p_{3}, \overline{p_{4}}=p_{4} $;
        \item[(2)] $ \overline{p_{1}}=p_{2}, \overline{p_{3}}=p_{3}, \overline{p_{4}}=p_{4} $;
        \item[(3)] $ \overline{p_{1}}=p_{2}, \overline{p_{3}}=p_{4} $.
    \end{enumerate}

    In case (1), convolution is commutative since
    \begin{align*}
         p_i*p_j=\sum\limits_{k=1}^{m} N_{i,j}^{k}p_k=\sum\limits_{k=1}^{m} N_{i,j}^{k}\overline{p_k}=\overline{(p_i*p_j)}=p_j*p_i
    \end{align*}
    for any $ 1 \le i,j \le 4 $.

    \begin{proposition}
        Convolution is commutative in case (3).
    \end{proposition}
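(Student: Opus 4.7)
The goal is to prove $N_{ij}^{k} = N_{ji}^{k}$ for all triples $(i,j,k)$, where the fusion coefficients are those of convolution in $\mathcal{P}_{2,+}$. My plan combines three ingredients: the $180^{\circ}$-rotation identity $N_{ij}^{k} = N_{\bar{j}\bar{i}}^{\bar{k}}$ coming from $\overline{p_i * p_j} = \bar{p_j} * \bar{p_i}$; the six-term dihedral Frobenius reciprocity $N_{kj}^{i} d_{i} = N_{j\bar{i}}^{\bar{k}} d_{\bar{k}} = \cdots$ of Proposition~\ref{basic_cons}; and the orbit structure peculiar to case~(3), in which the dual $\bar{\;}$ acts as a fixed-point-free involution on $\{p_1,p_2,p_3,p_4\}$ with precisely two orbits $\{p_1,p_2\}$ and $\{p_3,p_4\}$, so in particular $d_1=d_2$ and $d_3=d_4$.

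First, the desired equality $N_{ij}^{k}=N_{ji}^{k}$ is immediate whenever at least one of $i,j,k$ equals $0$ (via $N_{0j}^{k}=\delta_{jk}=N_{j0}^{k}$) or $i=j$ (tautological). For the remaining triples I would partition by how $i,j$ interact with the dual pairing. When $j=\bar{i}$, the element $p_i * p_{\bar{i}}$ is forced to be self-dual since $\overline{p_i * p_{\bar{i}}} = p_{\bar{\bar{i}}} * \bar{p_i} = p_i * p_{\bar{i}}$; combined with Frobenius this pins $N_{i\bar{i}}^{k}$ and $N_{\bar{i}i}^{k}$ down to the same value. When $\{i,j\}$ lies in a single dual pair with $i\ne j$, the same self-duality argument applies. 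The substantive case is when $i$ and $j$ straddle the two dual pairs, say $i\in\{1,2\}$ and $j\in\{3,4\}$: here one chains the identity $N_{ij}^{k}=N_{\bar{j}\bar{i}}^{\bar{k}}$ with Frobenius reciprocity applied to both $N_{ij}^{k}$ and $N_{ji}^{k}$, using $d_i = d_{\bar{i}}$ to cancel the dimension factors, to conclude $N_{ij}^{k}=N_{ji}^{k}$.

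The main obstacle I anticipate is closing this last straddle case: Frobenius reciprocity and duality in isolation tend to produce relations that are \emph{equivalent} to the commutativity one wants, rather than implying it (e.g.\ one finds $N_{14}^{1} = N_{42}^{2}$ and $N_{41}^{1} = N_{24}^{2} = N_{41}^{1}$, which is circular without further input). The distinguishing feature of case~(3) that breaks the circularity, as opposed to case~(2), is the parity of the involution: both non-trivial orbits of $\bar{\;}$ have size $2$, with no lone self-dual element in $\{p_1,p_2,p_3,p_4\}$ to act as a ``pivot'' obstructing commutativity. Should the direct index chase prove too intricate, an equivalent route is to pass through the Fourier transform, noting that $\mathfrak{F}_s(x*y)=\mathfrak{F}_s(x)\mathfrak{F}_s(y)$ makes convolutive commutativity on $\mathcal{P}_{2,+}$ equivalent to $\mathcal{P}_{2,-}$ being a commutative $C^{*}$-algebra; since $\dim \mathcal{P}_{2,-}=5$ the only alternative is $\mathbb{C}\oplus M_{2}(\mathbb{C})$, which can be excluded by a parity count tied to the number of self-dual simples in case~(3).
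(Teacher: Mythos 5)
There is a genuine gap: your argument never invokes the associativity of convolution, and that is precisely the ingredient the paper's proof needs to break the circularity you yourself identify. Duality and Frobenius reciprocity alone cannot close the ``straddle'' case. Concretely, with $\overline{p_1}=p_2$ and $\overline{p_3}=p_4$, the six-term identity of Proposition~\ref{basic_cons} together with $N_{kj}^{i}=N_{\overline{j}\,\overline{k}}^{\overline{i}}$ and $d_i=d_{\overline{i}}$ partitions the fusion coefficients into orbits, and $N_{12}^{3}$ and $N_{21}^{3}$ land in \emph{disjoint} orbits: one orbit is $\{N_{12}^{3}d_3=N_{24}^{2}d_2=N_{41}^{1}d_1=N_{12}^{4}d_4=N_{23}^{2}d_2=N_{31}^{1}d_1\}$ and the other is $\{N_{21}^{3}d_3=N_{14}^{1}d_1=N_{42}^{2}d_2=N_{21}^{4}d_4=N_{13}^{1}d_1=N_{32}^{2}d_2\}$. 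Nothing in these linear relations, and nothing about the ``parity of the involution,'' forces the two orbits to carry a common value; one can assign them different values consistently with all duality and Frobenius constraints, so the equality must come from elsewhere. The same defect appears in your $j=\overline{i}$ case: $\overline{p_i*p_{\overline{i}}}=p_i*p_{\overline{i}}$ only yields $N_{i\overline{i}}^{k}=N_{i\overline{i}}^{\overline{k}}$ (self-duality of the product), not the desired $N_{i\overline{i}}^{k}=N_{\overline{i}i}^{k}$.

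The paper supplies the missing input by comparing the coefficient of $p_1$ on both sides of the associativity equation $(p_1*p_2)*p_1=p_1*(p_2*p_1)$; after the Frobenius cancellations this quadratic identity collapses to the linear statement $N_{12}^{3}=N_{21}^{3}$, which merges the two orbits above and gives $p_1*p_2=p_2*p_1$, and a second associativity equation $(p_3*p_4)*p_3=p_3*(p_4*p_3)$ yields $N_{13}^{3}=N_{31}^{3}$ and hence $p_1*p_3=p_3*p_1$. Your Fourier-transform fallback is also not a proof as stated: commutativity of $*$ is indeed equivalent to $\mathcal{P}_{2,-}\not\cong\mathbb{C}\oplus M_2(\mathbb{C})$, but the ``parity count tied to the number of self-dual simples'' that is supposed to exclude this block structure is asserted rather than carried out; moreover, any such count based only on the fixed-point structure of the involution would have to explain why it distinguishes case (3) from case (2), where the paper does not claim commutativity.
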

    \begin{proof}
        Without loss of generality, it suffices to verify $ p_1*p_2=p_2*p_1 $ and $ p_1*p_3=p_3*p_1 $ to prove convolution is commutative.

    Consider the associativity equation $ (p_1*p_2)*p_1=p_1*(p_2*p_1) $. The fusion coefficients of $ p_1 $ on both sides give the following equation: 
    \begin{equation}\label{eq1}
        \begin{aligned}
        N_{1,2}^{0}N_{0,1}^{1}+N_{1,2}^{1}N_{1,1}^{1}+N_{1,2}^{2}N_{2,1}^{1}+N_{1,2}^{3}N_{3,1}^{1}+N_{1,2}^{4}N_{4,1}^{1}=&N_{2,1}^{0}N_{1,0}^{1}+N_{2,1}^{1}N_{1,1}^{1}+N_{2,1}^{2}N_{1,2}^{1}\\
        &+N_{2,1}^{3}N_{1,3}^{1}+N_{2,1}^{4}N_{1,4}^{1}.
    \end{aligned}
    \end{equation}

    According to Frobenius reciprocity, we have
    \begin{align*}
        N_{1,1}^{1}=N_{1,2}^{2}=N_{2,1}^{2}&=N_{2,2}^{2}=N_{2,1}^{1}=N_{1,2}^{1}, \\
        N_{1,3}^{1}d_1=N_{3,2}^{2}d_2=N_{2,1}^{4}d_4&=N_{4,2}^{2}d_2=N_{2,1}^{3}d_3=N_{1,4}^{1}d_1, \\
        N_{3,1}^{1}d_1=N_{1,2}^{4}d_4=N_{2,3}^{2}d_2&=N_{2,4}^{2}d_2=N_{4,1}^{1}d_1=N_{1,2}^{3}d_3.
    \end{align*}
    So Equation \ref{eq1} reduces to $ N_{1,2}^{3}=N_{2,1}^{3} $. Hence, $ N_{1,2}^{4} = N_{2,1}^{4} $. So the coefficients of $ p_i $ for $ 0 \le i \le 4 $ on both sides of $ p_1*p_2 $ and $ p_2*p_1 $ are equal, and we obtain $ p_1*p_2=p_2*p_1 $.

    Furthermore, we observe that $ N_{1,3}^{1}=N_{3,1}^{1} $, and Frobenius reciprocity gives $ N_{1,3}^{2}=N_{3,1}^{2} $ and $ N_{1,3}^{4}=N_{3,1}^{4} $. Consider the associativity equation $ (p_3*p_4)*p_3=p_3*(p_4*p_3) $. The fusion coefficients of $ p_3 $ on both sides give the following equation: 
    \begin{equation}\label{eq2}
        \begin{aligned}
            N_{3,4}^{0}N_{0,3}^{3}+N_{3,4}^{1}N_{1,3}^{3}+N_{3,4}^{2}N_{2,3}^{3}+N_{3,4}^{3}N_{3,3}^{3}+N_{3,4}^{4}N_{4,3}^{3}=&N_{4,3}^{0}N_{3,0}^{3}+N_{4,3}^{1}N_{3,1}^{3}+N_{4,3}^{2}N_{3,2}^{3}\\
            &+N_{4,3}^{3}N_{3,3}^{3}+N_{4,3}^{4}N_{3,4}^{3}.
        \end{aligned}
    \end{equation}
    
    According to Frobenius reciprocity, we have 
    \begin{align*}
        N_{3,4}^{1}d_1=N_{4,2}^{4}d_4=N_{2,3}^{3}d_3&=N_{3,4}^{2}d_2=N_{4,1}^{4}d_4=N_{1,3}^{3}d_3, \\
        N_{4,3}^{1}d_1=N_{3,2}^{3}d_3=N_{2,4}^{4}d_4&=N_{4,3}^{2}d_2=N_{3,1}^{3}d_3=N_{1,4}^{4}d_4, \\
        N_{3,3}^{3}=N_{3,4}^{4}=N_{4,3}^{4}&=N_{4,4}^{4}=N_{4,3}^{3}=N_{3,4}^{3}.
    \end{align*}
    So Equation \ref{eq2} reduces to $ N_{1,3}^{3}=N_{3,1}^{3} $, and we obtain $ p_1*p_3=p_3*p_1 $. Therefore, convolution is commutative.
    \end{proof}

    Now we determine the admissible indicator functions for each case according to Frobenius reciprocity and commutativity. We list the equivalence classes of fusion coefficients and group actions on them. Fusion coefficients concerning $ p_0 $ are omitted.

    \subsubsection{Equivalence classes of fusion coefficients in case (1)}

    $$t_{1} = \{N_{1,1}^{1}\},$$
    $$t_{2} = \{N_{1,2}^{1},N_{2,1}^{1},N_{1,1}^{2}\},$$
    $$t_{3} = \{N_{1,3}^{1},N_{3,1}^{1},N_{1,1}^{3}\},$$
    $$t_{4} = \{N_{1,4}^{1},N_{4,1}^{1},N_{1,1}^{4}\},$$
    $$t_{5} = \{N_{2,2}^{1},N_{2,1}^{2},N_{1,2}^{2}\},$$
    $$t_{6} = \{N_{2,3}^{1},N_{3,1}^{2},N_{1,2}^{3},N_{3,2}^{1},N_{2,1}^{3},N_{1,3}^{2}\},$$
    $$t_{7} = \{N_{2,4}^{1},N_{4,1}^{2},N_{1,2}^{4},N_{4,2}^{1},N_{2,1}^{4},N_{1,4}^{2}\},$$
    $$t_{8} = \{N_{3,3}^{1},N_{3,1}^{3},N_{1,3}^{3}\},$$
    $$t_{9} = \{N_{3,4}^{1},N_{4,1}^{3},N_{1,3}^{4},N_{4,3}^{1},N_{3,1}^{4},N_{1,4}^{3}\},$$
    $$t_{10} = \{N_{4,4}^{1},N_{4,1}^{4},N_{1,4}^{4}\},$$
    $$t_{11} = \{N_{2,2}^{2}\},$$
    $$t_{12} = \{N_{2,3}^{2},N_{3,2}^{2},N_{2,2}^{3}\},$$
    $$t_{13} = \{N_{2,4}^{2},N_{4,2}^{2},N_{2,2}^{4}\},$$
    $$t_{14} = \{N_{3,3}^{2},N_{3,2}^{3},N_{2,3}^{3}\},$$
    $$t_{15} = \{N_{3,4}^{2},N_{4,2}^{3},N_{2,3}^{4},N_{4,3}^{2},N_{3,2}^{4},N_{2,4}^{3}\},$$
    $$t_{16} = \{N_{4,4}^{2},N_{4,2}^{4},N_{2,4}^{4}\},$$
    $$t_{17} = \{N_{3,3}^{3}\},$$
    $$t_{18} = \{N_{3,4}^{3},N_{4,3}^{3},N_{3,3}^{4}\},$$
    $$t_{19} = \{N_{4,4}^{3},N_{4,3}^{4},N_{3,4}^{4}\},$$
    $$t_{20} = \{N_{4,4}^{4}\}.$$
    Identify $ t_i $ with $ i $, then the $ S_4 $-action on equivalence classes is given in the form of permutations on $ t_i $'s. The trivial action is omitted.

    $$( 3,4 )( 6,7 )( 8,10 )( 12,13 )( 14,16 )( 17,20 )( 18,19 ),$$
$$( 2,3 )( 5,8 )( 7,9 )( 11,17 )( 12,14 )( 13,18 )( 16,19 ),$$
$$( 2,3,4 )( 5,8,10 )( 6,9,7 )( 11,17,20 )( 12,18,16 )( 13,14,19 ),$$
$$( 2,4,3 )( 5,10,8 )( 6,7,9 )( 11,20,17 )( 12,16,18 )( 13,19,14 ),$$
$$( 2,4 )( 5,10 )( 6,9 )( 11,20 )( 12,19 )( 13,16 )( 14,18 ),$$
$$(1,11 )( 2,5 )( 3,12 )( 4,13 )( 8,14 )( 9,15 )( 10,16 ),$$
$$(1,11 )( 2,5 )( 3,13 )( 4,12 )( 6,7 )( 8,16 )( 9,15 )( 10,14 )( 17,20 )( 18,19 ),$$
$$(1,11,17 )( 2,12,8 )( 3,5,14 )( 4,13,18 )( 7,15,9 )( 10,16,19 ),$$
$$(1,11,17,20 )( 2,12,18,10 )( 3,13,8,16 )( 4,5,14,19 )( 6,15,9,7 ),$$
$$(1,11,20,17 )( 2,13,19,8 )( 3,5,16,18 )( 4,12,10,14 )( 6,7,15,9 ),$$
$$(1,11,20 )( 2,13,10 )( 3,12,19 )( 4,5,16 )( 6,15,9 )( 8,14,18 ),$$
$$(1,17,11 )( 2,8,12 )( 3,14,5 )( 4,18,13 )( 7,9,15 )( 10,19,16 ),$$
$$(1,17,20,11 )( 2,8,19,13 )( 3,18,16,5 )( 4,14,10,12 )( 6,9,15,7 ),$$
$$(1,17 )( 2,14 )( 3,8 )( 4,18 )( 5,12 )( 7,15 )( 10,19 ),$$
$$(1,17,20 )( 2,14,16 )( 3,18,10 )( 4,8,19 )( 5,12,13 )( 6,15,7 ),$$
$$(1,17 )( 2,18 )( 3,8 )( 4,14 )( 5,19 )( 6,9 )( 7,15 )( 10,12 )( 11,20 )( 13,16 ),$$
$$(1,17,11,20 )( 2,18,5,19 )( 3,14,13,10 )( 4,8,12,16 )( 6,15,7,9 ),$$
$$(1,20,17,11 )( 2,10,18,12 )( 3,16,8,13 )( 4,19,14,5 )( 6,7,9,15 ),$$
$$(1,20,11 )( 2,10,13 )( 3,19,12 )( 4,16,5 )( 6,9,15 )( 8,18,14 ),$$
$$(1,20,17 )( 2,16,14 )( 3,10,18 )( 4,19,8 )( 5,13,12 )( 6,7,15 ),$$
$$(1,20 )( 2,16 )( 3,19 )( 4,10 )( 5,13 )( 6,15 )( 8,18 ),$$
$$(1,20,11,17 )( 2,19,5,18 )( 3,10,13,14 )( 4,16,12,8 )( 6,9,7,15 ),$$
$$(1,20 )( 2,19 )( 3,16 )( 4,10 )( 5,18 )( 6,15 )( 7,9 )( 8,13 )( 11,17 )( 12,14 ).$$

    \subsubsection{Equivalence classes of fusion coefficients in case (2)}
    $$t_{1} = \{N_{1,1}^{1} , N_{1,2}^{2} , N_{2,1}^{2} , N_{2,2}^{2} , N_{2,1}^{1} , N_{1,2}^{1}\},$$
    $$t_{2} = \{N_{1,3}^{1} , N_{3,2}^{2} , N_{2,1}^{3}\},$$
    $$t_{3} = \{N_{1,4}^{1} , N_{4,2}^{2} , N_{2,1}^{4}\},$$
    $$t_{4} = \{N_{2,2}^{1} , N_{1,1}^{2}\},$$
    $$t_{5} = \{N_{2,3}^{1} , N_{3,2}^{1} , N_{2,2}^{3} , N_{3,1}^{2} , N_{1,1}^{3} , N_{1,3}^{2}\},$$
    $$t_{6} = \{N_{2,4}^{1} , N_{4,2}^{1} , N_{2,2}^{4} , N_{4,1}^{2} , N_{1,1}^{4} , N_{1,4}^{2}\},$$
    $$t_{7} = \{N_{3,1}^{1} , N_{1,2}^{3} , N_{2,3}^{2}\},$$
    $$t_{8} = \{N_{3,3}^{1} , N_{3,2}^{3} , N_{2,3}^{3} , N_{3,3}^{2} , N_{3,1}^{3} , N_{1,3}^{3}\},$$
    $$t_{9} = \{N_{3,4}^{1} , N_{4,2}^{3} , N_{2,3}^{4} , N_{4,3}^{2} , N_{3,1}^{4} , N_{1,4}^{3}\},$$
    $$t_{10} = \{N_{4,1}^{1} , N_{1,2}^{4} , N_{2,4}^{2}\},$$
    $$t_{11} = \{N_{4,3}^{1} , N_{3,2}^{4} , N_{2,4}^{3} , N_{3,4}^{2} , N_{4,1}^{3} , N_{1,3}^{4}\},$$
    $$t_{12} = \{N_{4,4}^{1} , N_{4,2}^{4} , N_{2,4}^{4} , N_{4,4}^{2} , N_{4,1}^{4} , N_{1,4}^{4}\},$$
    $$t_{13} = \{N_{3,3}^{3}\},$$
    $$t_{14} = \{N_{3,4}^{3} , N_{4,3}^{3} , N_{3,3}^{4}\},$$
    $$t_{15} = \{N_{4,4}^{3} , N_{4,3}^{4} , N_{3,4}^{4}\},$$
    $$t_{16} = \{N_{4,4}^{4}\}.$$

    The $ Z_2\times S_2 $-action is given in the form of permutations of $ t_i $'s.
    $$( 2,3 )( 5,6 )( 7,10 )( 8,12 )( 9,11 )( 13,16 )( 14,15 ),$$
    $$( 2,7 )( 3,10 )( 9,11 ),$$
    $$( 2,10 )( 3,7 )( 5,6 )( 8,12 )( 13,16 )( 14,15 ).$$

    \subsubsection{Equivalence classes of fusion coefficients in case (3)}
    $$t_{1} = \{N_{1,1}^{1} , N_{1,2}^{2} , N_{2,1}^{2} , N_{2,2}^{2} , N_{2,1}^{1} , N_{1,2}^{1}\},$$
    $$t_{2} = \{N_{1,3}^{1} , N_{3,2}^{2} , N_{2,1}^{4} , N_{4,2}^{2} , N_{2,1}^{3} , N_{1,4}^{1} , N_{3,1}^{1} , N_{2,3}^{2} , N_{1,2}^{4} , N_{2,4}^{2} , N_{1,2}^{3} , N_{4,1}^{1}\},$$
    $$t_{3} = \{N_{2,2}^{1} , N_{1,1}^{2}\},$$
    $$t_{4} = \{N_{2,3}^{1} , N_{3,2}^{1} , N_{2,2}^{4} , N_{4,1}^{2} , N_{1,1}^{3} , N_{1,4}^{2}\},$$
    $$t_{5} = \{N_{2,4}^{1} , N_{4,2}^{1} , N_{2,2}^{3} , N_{3,1}^{2} , N_{1,1}^{4} , N_{1,3}^{2}\},$$
    $$t_{6} = \{N_{3,3}^{1} , N_{3,2}^{4} , N_{2,3}^{4} , N_{4,4}^{2} , N_{4,1}^{3} , N_{1,4}^{3}\},$$
    $$t_{7} = \{N_{3,4}^{1} , N_{4,2}^{4} , N_{2,3}^{3} , N_{3,4}^{2} , N_{4,1}^{4} , N_{1,3}^{3} , N_{4,3}^{1} , N_{2,4}^{4} , N_{3,2}^{3} , N_{4,3}^{2} , N_{1,4}^{4} , N_{3,1}^{3}\},$$
    $$t_{8} = \{N_{4,4}^{1} , N_{4,2}^{3} , N_{2,4}^{3} , N_{3,3}^{2} , N_{3,1}^{4} , N_{1,3}^{4}\},$$
    $$t_{9} = \{N_{3,3}^{3} , N_{3,4}^{4} , N_{4,3}^{4} , N_{4,4}^{4} , N_{4,3}^{3} , N_{3,4}^{3}\},$$
    $$t_{10} = \{N_{4,4}^{3} , N_{3,3}^{4}\}.$$

    The $ S_2\times Z_2^{\oplus 2} $-action is given in the form of permutations of $ t_i $'s.
    $$( 4,5 )( 6,8 ),$$
    $$(1,9 )( 2,7 )( 3,10 )( 4,6 )( 5,8 ),$$
    $$(1,9 )( 2,7 )( 3,10 )( 4,8 )( 5,6 ).$$

    \subsubsection{Results}

    \begin{theorem}
        Suppose $ \mathcal{B} $ is a 5-dimensional subfactorizable fusion bialgebra with exchange relations. Then $ \mathcal{B} $ arises from one of the following planar algebras:
        \begin{enumerate}
            \item[(1)] $ \mathcal{P}^{\mathbb{Z}_5} $;
            \item[(2)] $ \mathcal{P}*\mathcal{S} $ for two exchange relation planar algebras $ \mathcal{P} $ and $ \mathcal{S} $ with $ 2 \le \dim \mathcal{P}_{2,\pm} \le 4 $ and $ \dim \mathcal{P}_{2,\pm}+\dim \mathcal{S}_{2,\pm}=6 $.
        \end{enumerate}
    \end{theorem}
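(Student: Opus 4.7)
The plan is to execute the seven-step classification scheme for $\dim \mathcal{B} = 5$. First I would split into three cases by the duality structure on $\{p_1, p_2, p_3, p_4\}$: (1) all self-dual, (2) one dual pair $\{p_1, p_2\}$ with $p_3, p_4$ self-dual, (3) two dual pairs $\{p_1, p_2\}$ and $\{p_3, p_4\}$. For cases (1) and (3), I expect convolution to be commutative: in (1) this is immediate from taking duals of $p_i*p_j$, while in (3) it requires a short associativity argument --- applying $(p_1*p_2)*p_1 = p_1*(p_2*p_1)$ and using Frobenius reciprocity to collapse all but one term on each side yields $N_{1,2}^3 = N_{2,1}^3$, and similarly for $p_1*p_3$ via $(p_3*p_4)*p_3 = p_3*(p_4*p_3)$.

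Using commutativity (when present) together with Frobenius reciprocity, I would partition the nontrivial fusion coefficients into equivalence classes --- 20, 16, and 10 in cases (1)--(3) respectively --- and compute the action of the group $G$ of unital dual-preserving automorphisms, which is $S_4$, $\mathbb{Z}_2 \times S_2$, and $S_2 \times \mathbb{Z}_2^{\oplus 2}$ in the three cases. Encoding each admissible indicator function as a binary string whose length equals the number of equivalence classes, I would feed the $2^{20} + 2^{16} + 2^{10} = 1{,}115{,}136$ candidate strings into the \textbf{FuBi} program. Step \textbf{FF} selects those whose every $\Gamma_k$ is a forest (Theorem \ref{thm:forest}), step \textbf{RG} quotients by the $G$-action, step \textbf{APC} applies the Associative Positivity Criterion derived from the associativity equation \eqref{associativity}, and step \textbf{FTPC} removes candidates arising from free or tensor products, using Algorithm \ref{algorithm:genS} together with the known database of lower-dimensional exchange relation planar algebras.

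After all sieves (cf.\ Table \ref{tableintro}) exactly one graph survives, lying in case (3). I would verify that every $\Gamma_k$ of this graph is $1$-regular, which forces the underlying fusion bialgebra to come from a group of order $5$; the only such group being $\mathbb{Z}_5$, this produces $\mathcal{P}^{\mathbb{Z}_5}$. For part (2) of the theorem, I would observe that the graphs eliminated by \textbf{FTPC} correspond precisely to free products $\mathcal{P} * \mathcal{S}$ of exchange relation planar algebras whose dimensions sum to $6$, and these are fully determined by the classification in dimensions $\le 4$ recovered in Section \ref{prevwork}; unitarity is automatic since free products of subfactor planar algebras are subfactor planar algebras.

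The main obstacle is the combinatorial explosion: over a million admissible indicator functions must be enumerated, which is only feasible because of the remarkable efficiency of \textbf{APC}, collapsing the representative count from $2137 + 1292 + 46 = 3475$ to just $41 + 18 + 2 = 61$. The delicate human input is ensuring the criteria are implemented faithfully --- in particular, that \textbf{APC} correctly detects the sign-consistency of all monomials in $f(i,j,k,\ell)$, and that Algorithm \ref{algorithm:genS} returns a minimum $S$ so that $L_2 \setminus S$ being nonempty is equivalent to a genuine free product decomposition. Once these are in place, the step 7 solution of structure equations is redundant for the surviving $\mathbb{Z}_5$-graph, yielding a classification obtained purely by combinatorial and positivity arguments.
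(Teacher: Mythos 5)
Your proposal follows the paper's proof essentially step for step: the same three duality cases, the same commutativity arguments (duals in case (1), the two associativity identities in case (3)), the same equivalence-class counts and group actions, the same \textbf{FF}/\textbf{RG}/\textbf{APC}/\textbf{FTPC} sieve with matching counts, and the same endgame identifying the unique surviving $1$-regular graph with $\mathcal{P}^{\mathbb{Z}_5}$ while attributing everything else to free products (the tensor-product part of \textbf{FTPC} being vacuous since $5$ is prime). This matches the paper's argument; no substantive differences.
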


    \begin{proof}
        We follow our classification scheme from Step 3 to Step 7. 

        First, we list all admissible indicator functions (AIFs) in each case, the number of which depends on the number of equivalence classes shown above. Note that AIFs are just binary numbers. There are $ 2^{20} $, $ 2^{16} $, $ 2^{10} $ AIFs in cases (1), (2), (3) respectively. Then we check each AIF to detect whether its equivalent fusion graph is a forest. Put all forest fusion graphs in a set $ F_1 $. $ |F_1| = 47381, 4435, 137 $ in cases (1), (2), (3) respectively.
        
        Next, we partition $ F $ by orbits of group action. Select each element in an orbit to form a set $ F_2 $. $ |F_2| = 2137, 1292, 46 $ in cases (1), (2), (3) respectively. 

        And then, we apply APC to sieve $ F_2 $ to obtain a set $ F_3 $. $ |F_3| = 41, 18, 2 $ in cases (1), (2), (3) respectively.

        Next, we apply FTPC to sieve $ F_3 $ to obtain a set $ F_4 $. $ |F_4| = 0, 0, 1 $ in cases (1), (2), (3) respectively. 
        
        Finally, only one forest fusion graph $ \Gamma $ in case $ (3) $ passes all sieving criteria. We observe that each $ \Gamma_k $ is 1-regular, indicating that $ \Gamma $ is derived from a group planar algebra. Note that $ \mathbb{Z}_5 $ is the unique group of order $ 5 $ and it contains $ 2 $ pairs of dual minimal projections. Therefore, $ \Gamma $ originates from $ \mathcal{P}^{\mathbb{Z}_5} $. 

        Since $ 5 $ is a prime number, the tensor product criterion does not apply. Hence all other subfactorizable fusion bialgebras with exchange relations originate from a free product of exchange relation planar algebras.
    \end{proof}

    \begin{remark}
        Luckily, we achieve a classification result for 5-dimensional subfactorizable fusion bialgebra with exchange relations without the need to solve any polynomial equations directly.
    \end{remark}

    All data that appear in the proof are presented in Table \ref{table:5d}, which provides a comprehensive overview of the sieving process for subfactorizable fusion bialgebras with exchange relations, specifically for $ \dim \mathcal{P}_{2,\pm} = 5 $. The data were obtained by running our computer program \textbf{FuBi} on a 6-core AMD Ryzen 5600X @ 3.70 GHz with 16 GB of RAM using Microsoft Windows 10.

    We traverse over all admissible indicator functions (AIFs) to apply our sieving criteria from top to bottom in the table. The efficiency of the sieving process is particularly noteworthy given the large number of candidates involved. The forest structure of fusion graphs, denoted as \textbf{FF}, plays a crucial role in this efficiency, successfully eliminating 95.48\%, 93.23\% and 86.62\% AIFs in cases (1), (2), and (3), respectively. Additionally, the $ G $-action criterion \textbf{RG} effectively selects representative fusion graphs without duplicate. This process rules out 95.49\%, 70.87\%, and 66.42\% duplicate forest fusion graphs in cases (1), (2), and (3), respectively. The associative positivity criterion \textbf{APC} significantly reduces the number of forest fusion graphs. The sieving rate is 98.08\%, 98.61\%, and 95.65\% respectively.  The sieving rate of forest/tensor product criterion \textbf{FTPC} is 100\% in cases (1) and (2), indicating that every forest fusion graph derived from subfactor planar algebras originates from a free product of exchange relation planar algebras. 

    Ultimately, the application of these various criteria results in a unique forest fusion graph remaining, demonstrating the efficiency of the sieving process. Furthermore, the computational time required for this process is remarkably small, as detailed in the table below.

    \begin{table}[htbp]
        \centering
        \begin{tabular}{crrr}
        \toprule
        Criteria & \# AIFs(case 1)  & \# AIFs(case 2) & \# AIFs(case 3) \\
        \midrule
         & $ 1048576 $ & $ 65536 $ & $ 1024 $ \\
        \textbf{FF} & $ 47381 $ & $ 4435 $ & $ 137 $ \\
        \textbf{RG} & $ 2137 $ & $ 1292 $ & $ 46 $ \\
        \textbf{APC} & $ 41 $ & $ 18 $ & $ 2 $ \\
        \textbf{FTPC} & $ 0 $ & $ 0 $ & $ 1 $ \\
        \hline
        Time(seconds) & 9.443 & 1.264 & 0.162 \\
        \bottomrule
        \end{tabular}
        \caption{Sieving process for $ \dim \mathcal{P}_{2,\pm} = 5 $.}
        \label{table:5d}
    \end{table}

    This example illustrates that our classification scheme is highly efficient and practical. 
    While the computational complexity may pose challenges for larger numbers of generators due to the exponential growth of admissible indicator functions with respect to $ \dim \mathcal{P}_{2,+} $, our current scheme demonstrates a robust capability to handle the classification of 5-dimensional subfactorizable fusion bialgebras with exchange relations effectively.

    \subsection{Quick proof of previous work}\label{prevwork}
    Bisch, Jones, and the second author classified exchange relation planar algebras up to 4-dimension.
    We show tables of classification process for 4-dimensional and 3-dimensional fusion bialgebras to give a quick proof of their results.

    \begin{table}[htbp]
        \centering
        \begin{tabular}{crr}
        \toprule
        Criteria & \# Graphs(case 1)  & \# Graphs(case 2) \\
        \midrule
         & $ 1024 $ & $ 64 $  \\
        \textbf{FF} & $ 308 $ & $ 20 $  \\
        \textbf{RG} & $ 64 $ & $ 20 $  \\
        \textbf{APC} & $ 15 $ & $ 5 $  \\
        \textbf{FTPC} & $ 1 $ & $ 1 $  \\
        \hline
        Time(seconds) & 0.084 & 0.038  \\
        \bottomrule
        \end{tabular}
        \caption{Sieving process for $ \dim \mathcal{P}_{2,\pm} = 4 $.}
        \label{table:4d}
    \end{table}

    Table \ref{table:4d} shows the classification process for $ \mathcal{P}_{2,\pm} = 4 $. When the number of duals is 1 (case 2), only one forest fusion graph passes all criteria and all of its fusion graphs are 1-regular, so it originates from the group planar algebra $ \mathcal{P}^{\mathbb{Z}_4} $. When the number of duals is 0 (case 1), there are 1 remaining forest fusion graph. we demonstrate its fusion graphs of $ p_1, p_2, p_3 $.

    \begin{align}
        \Gamma_1: 
        \begin{tikzpicture}[scale=0.8,baseline={([yshift=-0.5ex]current bounding box.center)}]
            \foreach \x in {0,1,2,3}{
              \node[circle, fill=black, inner sep=2pt, outer sep=1pt, label=left: \x] (A\x) at (0,-\x) {};
            }
            \foreach \y in {0,1,2,3}{
              \node[circle,draw, inner sep=2pt, outer sep=1pt, label=right: \y] (B\y) at (2,-\y) {};
            }
            \draw (A0) -- (B1);
            \draw (A1) -- (B0);
            \draw (A1) -- (B2);
            \draw (A2) -- (B1);
            \draw (A2) -- (B3);
            \draw (A3) -- (B2);
            \draw (A3) -- (B3);
        \end{tikzpicture},
          \quad
        \Gamma_2: 
        \begin{tikzpicture}[scale=0.8,baseline={([yshift=-0.5ex]current bounding box.center)}]
            \foreach \x in {0,1,2,3}{
              \node[circle, fill=black, inner sep=2pt, outer sep=1pt, label=left: \x] (A\x) at (0,-\x) {};
            }
            \foreach \y in {0,1,2,3}{
              \node[circle,draw, inner sep=2pt, outer sep=1pt, label=right: \y] (B\y) at (2,-\y) {};
            }
            \draw (A0) -- (B2);
            \draw (A1) -- (B1);
            \draw (A1) -- (B3);
            \draw (A2) -- (B0);
            \draw (A2) -- (B3);
            \draw (A3) -- (B1);
            \draw (A3) -- (B2);
        \end{tikzpicture}, 
        \Gamma_3: 
        \begin{tikzpicture}[scale=0.8,baseline={([yshift=-0.5ex]current bounding box.center)}]
            \foreach \x in {0,1,2,3}{
              \node[circle, fill=black, inner sep=2pt, outer sep=1pt, label=left: \x] (A\x) at (0,-\x) {};
            }
            \foreach \y in {0,1,2,3}{
              \node[circle,draw, inner sep=2pt, outer sep=1pt, label=right: \y] (B\y) at (2,-\y) {};
            }
            \draw (A0) -- (B3);
            \draw (A1) -- (B2);
            \draw (A1) -- (B3);
            \draw (A2) -- (B1);
            \draw (A2) -- (B2);
            \draw (A3) -- (B0);
            \draw (A3) -- (B1);
        \end{tikzpicture}.
    \end{align}
    
    We solve its structure equations and conclude that this fusion graph originates from the group-subgroup subfactor planar algebra $ \mathcal{P}^{\mathbb{Z}_2\subset \mathbb{Z}_7\rtimes \mathbb{Z}_2} $.

    \begin{table}[htbp]
        \centering
        \begin{tabular}{crr}
        \toprule
        Criteria & \# Graphs(case 1)  & \# Graphs(case 2) \\
        \midrule
         & $ 16 $ & $ 4 $  \\
        \textbf{FF} & $ 10 $ & $ 2 $  \\
        \textbf{RG} & $ 6 $ & $ 2 $  \\
        \textbf{APC} & $ 5 $ & $ 1 $  \\
        \textbf{FTPC} & $ 1 $ & $ 1 $  \\
        \hline
        Time(seconds) & 0.01 & 0.006  \\
        \bottomrule
        \end{tabular}
        \caption{Sieving process for $ \dim \mathcal{P}_{2,\pm} = 3 $.}
        \label{table:3d}
    \end{table}

    Table \ref{table:3d} shows the sieving process for $ \mathcal{P}_{2,\pm} = 3 $. When the number of duals is 1 (case 2), the unique forest fusion graph originates from the group planar algebra $ \mathcal{P}^{\mathbb{Z}_3} $. When the number of duals is 0 (case 1), the fusion graphs of $ p_1 $ and $ p_2 $ of the remaining forest fusion graph are as follows.
    \begin{align}
        \Gamma_1: 
        \begin{tikzpicture}[scale=0.8,baseline={([yshift=-0.5ex]current bounding box.center)}]
            \foreach \x in {0,1,2}{
              \node[circle, fill=black, inner sep=2pt, outer sep=1pt, label=left: \x] (A\x) at (0,-\x) {};
            }
            \foreach \y in {0,1,2}{
              \node[circle,draw, inner sep=2pt, outer sep=1pt, label=right: \y] (B\y) at (2,-\y) {};
            }
            \draw (A0) -- (B1);
            \draw (A1) -- (B0);
            \draw (A1) -- (B2);
            \draw (A2) -- (B1);
            \draw (A2) -- (B2);
        \end{tikzpicture},
          \quad
        \Gamma_2: 
        \begin{tikzpicture}[scale=0.8,baseline={([yshift=-0.5ex]current bounding box.center)}]
            \foreach \x in {0,1,2}{
              \node[circle, fill=black, inner sep=2pt, outer sep=1pt, label=left: \x] (A\x) at (0,-\x) {};
            }
            \foreach \y in {0,1,2}{
              \node[circle,draw, inner sep=2pt, outer sep=1pt, label=right: \y] (B\y) at (2,-\y) {};
            }
            \draw (A0) -- (B2);
            \draw (A1) -- (B1);
            \draw (A1) -- (B2);
            \draw (A2) -- (B0);
            \draw (A2) -- (B1);
        \end{tikzpicture}.
    \end{align}
    We solve its structure equations and conclude that it originates from $ \mathcal{P}^{\mathbb{Z}_2\subset \mathbb{Z}_5\rtimes \mathbb{Z}_2} $.

    Note that exchange relation planar algebras of 2-dimensional 2-box space are precisely Temperley-Lieb-Jones planar algebras $ TL $ or $ \mathcal{P}^{\mathbb{Z}_2} $ in the reduced case. The Tensor Product construction gives planar algebras $ \mathcal{P}^{\mathbb{Z}_2}\otimes \mathcal{P}^{\mathbb{Z}_2} \cong \mathcal{P}^{\mathbb{Z}_2\oplus \mathbb{Z}_2} $ and $ \mathcal{P}^{\mathbb{Z}_2}\otimes TL $ in the 4-dimensional case. Thus we obtain the following theorems.

    \begin{theorem}[\cite{Bisch2000Singly,Bisch2003Singly}]\label{expa3d}
        Suppose $ \mathcal{P} $ is an exchange relation planar algebra with $ \dim \mathcal{P}_2=3 $. Then $ \mathcal{P} $ is one of the following: $ (1) \mathcal{P}^{\mathbb{Z}_3} $; $ (2) TL*TL $; $ (3) \mathcal{P}^{\mathbb{Z}_2\subset \mathbb{Z}_5\rtimes \mathbb{Z}_2} $. 
    \end{theorem}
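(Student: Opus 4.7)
The plan is to apply the classification scheme developed in this paper (Steps 3--7) specialized to $\dim \mathcal{P}_{2,+}=3$, mirroring the $5$-dimensional argument but with the much simpler enumeration summarized in Table \ref{table:3d}. First I split into two cases according to the duality structure on $L_2=\{p_0,p_1,p_2\}$: case 1, where both $p_1$ and $p_2$ are self-dual, and case 2, where $\overline{p_1}=p_2$. In either case convolution is automatically commutative (directly in case 2 since there are only two dual pairs, and by the same argument used in case (1) of the $5$-dimensional analysis for case 1). After accounting for Frobenius reciprocity, unitality, and commutativity, there are respectively $2^{4}=16$ and $2^{2}=4$ admissible indicator functions to enumerate.

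Next I apply the sieving criteria in order. The forest fusion criterion \textbf{FF} (Theorem \ref{thm:forest}) cuts the counts to $10$ and $2$; quotienting by the natural action of the group $G$ of unital dual-preserving automorphisms (\textbf{RG}) leaves $6$ and $2$ orbit representatives; the associative positivity criterion \textbf{APC}, applied to the polynomials $f(i,j,k,\ell)$, reduces these to $5$ and $1$; and the free/tensor product criterion \textbf{FTPC} eliminates every graph that factors as a free product, leaving precisely one graph in each case.

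In case 2, the unique surviving graph is $1$-regular, so each $\Gamma_k$ is the graph of a permutation; by the example in Section \ref{sec:cg} such fusion graphs arise from a finite group, and since $\mathbb{Z}_3$ is the only group of order $3$, the graph forces $\mathcal{P}\cong\mathcal{P}^{\mathbb{Z}_3}$. In case 1, I would write down the unique surviving graph (the $\Gamma_1,\Gamma_2$ displayed in the excerpt) and solve its structure equations: by Theorem \ref{thm:parameter}, all exchange parameters $a_{sk}^{ij}, b_{\ell m}^{ij}$ are pinned to $0$ or $\pm 1$, so the remaining system reduces via Equation \eqref{E_2face_1} to linear and quadratic constraints on the quantum dimensions $d_0,d_1,d_2$. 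A direct computation yields a unique consistent solution, which one then identifies with the known invariants of the group-subgroup planar algebra $\mathcal{P}^{\mathbb{Z}_2\subset\mathbb{Z}_5\rtimes\mathbb{Z}_2}$, and verifies unitarity by hand.

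Finally, the free-product graphs eliminated by \textbf{FTPC} must be recovered. Since the $2$-dimensional exchange relation planar algebras are exhausted (up to the group case $\mathcal{P}^{\mathbb{Z}_2}$ already accounted for) by Temperley--Lieb $TL$, the only free-product realization achieving the required $3$-dimensional 2-box space while falling in case 1 of the duality split is $TL*TL$, giving case (2) of the theorem. The main obstacle is the last step: the explicit solution of the structure equations for the surviving case 1 graph and the identification of that solution with $\mathcal{P}^{\mathbb{Z}_2\subset\mathbb{Z}_5\rtimes\mathbb{Z}_2}$, together with the unitarity verification; all combinatorial enumeration and sieving are automated by the program \textbf{FuBi}.
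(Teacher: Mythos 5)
Your proposal follows essentially the same route as the paper: the paper proves this result by running the same sieve (Table \ref{table:3d}, with the identical counts $16\to10\to6\to5\to1$ and $4\to2\to2\to1\to1$), identifying the case-2 survivor as $\mathcal{P}^{\mathbb{Z}_3}$ via $1$-regularity, solving the structure equations of the unique case-1 survivor to obtain $\mathcal{P}^{\mathbb{Z}_2\subset\mathbb{Z}_5\rtimes\mathbb{Z}_2}$, and recovering the free-product case as $TL*TL$ from the known $2$-dimensional classification. The only cosmetic difference is that you spell out the commutativity and free-product bookkeeping slightly more explicitly than the paper does.
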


    \begin{theorem}[\cite{Liu2016Exchange}]\label{expa4d}
        Suppose $ \mathcal{P} $ is an exchange relation planar algebra with $ \dim \mathcal{P}_2=4 $. Then $ \mathcal{P} $ is one of the following:
        \begin{enumerate}
            \item[(1)] $ \mathcal{P}^{\mathbb{Z}_4} $, or $ \mathcal{P}^{\mathbb{Z}_2\oplus \mathbb{Z}_2} $;
            \item[(2)] $ \mathcal{A}*TL $, or $ TL*\mathcal{A} $, for an exchange relation planar algebra $ \mathcal{A} $ with $ \dim \mathcal{A}_2=3 $;
            \item[(3)] $ \mathcal{P}^{\mathbb{Z}_2}\otimes TL $;
            \item[(4)] $ \mathcal{P}^{\mathbb{Z}_2\subset \mathbb{Z}_7\rtimes \mathbb{Z}_2} $.
        \end{enumerate}
    \end{theorem}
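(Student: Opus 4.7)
The plan is to execute Steps 3--7 of the classification scheme for $\dim \mathcal{P}_{2,\pm} = 4$, mirroring the 5-dimensional argument but with two relevant duality cases. First I would split $L_2 = \{p_0, p_1, p_2, p_3\}$ into two cases according to the number of non-trivial dual pairs: Case 1, where all of $p_1, p_2, p_3$ are self-dual, and Case 2, where $\overline{p_1} = p_2$ and $p_3$ is self-dual. In Case 1, convolution is automatically commutative since each $p_i*p_j$ equals its own dual. In Case 2, I would establish commutativity by the same associativity-plus-Frobenius argument used in the 5-dimensional Case (3): an associativity equation on $(p_1*p_2)*p_1$ versus $p_1*(p_2*p_1)$ together with the Frobenius identities of Proposition \ref{basic_cons} forces $N_{1,2}^{3} = N_{2,1}^{3}$, from which $p_1 * p_2 = p_2 * p_1$ follows.

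Next I would set up the admissible indicator functions (AIFs) in each case by listing the equivalence classes of fusion coefficients under Frobenius reciprocity, the unital condition, and (in Case 2) commutativity. In Case 1 this produces $10$ equivalence classes and hence $2^{10} = 1024$ AIFs, with the permutation group $G = S_3$ acting on the self-dual generators; in Case 2 one gets $6$ equivalence classes and $2^6 = 64$ AIFs, with $G = \mathbb{Z}_2$ exchanging $p_1$ and $p_2$. I would then run the sieving pipeline \textbf{FF} $\to$ \textbf{RG} $\to$ \textbf{APC} $\to$ \textbf{FTPC}, whose counts are recorded in Table \ref{table:4d}: in Case 1 one reaches $308$, $64$, $15$, $1$ and in Case 2 one reaches $20$, $20$, $5$, $1$.

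The remaining content of the theorem is then the accounting of where each listed planar algebra enters. The free products $\mathcal{A}*TL$ and $TL*\mathcal{A}$ (item (2)) and the tensor product $\mathcal{P}^{\mathbb{Z}_2}\otimes TL$ together with $\mathcal{P}^{\mathbb{Z}_2}\otimes\mathcal{P}^{\mathbb{Z}_2}\cong\mathcal{P}^{\mathbb{Z}_2\oplus\mathbb{Z}_2}$ (items (3) and part of (1)) are precisely the fusion graphs sieved by \textbf{FTPC}, which are realized because each factor is known to be exchange relation and because the exchange-relation property is preserved under free product and under tensor product with a depth-$2$ planar algebra. The single graph surviving \textbf{FTPC} in Case 2 is $1$-regular and therefore arises from the unique (up to isomorphism) group of order $4$ with one dual pair, namely $\mathcal{P}^{\mathbb{Z}_4}$. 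For the remaining graph in Case 1, I would draw the forest fusion graphs $\Gamma_1, \Gamma_2, \Gamma_3$ explicitly; they are $2$-regular forests whose associative positivity passes by construction, so by Step 7 I solve the structure equations of Theorem \ref{thm:polyeq}, simplified by Theorem \ref{thm:parameter}, in the essential variables $\{d_i\}$. The linear subsystem then pins down the quantum dimensions to those of the group-subgroup subfactor $\mathbb{Z}_2\subset\mathbb{Z}_7\rtimes\mathbb{Z}_2$, and verification against the quadratic subsystem and the unitarity condition identifies the solution with $\mathcal{P}^{\mathbb{Z}_2\subset\mathbb{Z}_7\rtimes\mathbb{Z}_2}$.

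The principal obstacle I expect is Step 7 for the Case 1 survivor: once the forest and the sign parameters $\pm 1$ from Theorem \ref{thm:parameter} are fixed, the fusion coefficients become alternating sums of the $d_i$'s, and one must verify that the overdetermined quadratic system (from associativity and the $3$-face consistency equations \ref{3face_1}--\ref{3face_3}) is consistent and yields precisely the $\mathcal{P}^{\mathbb{Z}_2\subset\mathbb{Z}_7\rtimes\mathbb{Z}_2}$ parameters, together with the unitarity check that \textbf{FuBi} cannot automate. Beyond this, the classification is forced, and combined with the trivial observations that $\mathcal{P}^{\mathbb{Z}_2\oplus\mathbb{Z}_2}\cong\mathcal{P}^{\mathbb{Z}_2}\otimes\mathcal{P}^{\mathbb{Z}_2}$ and that free products with the two-dimensional exchange-relation planar algebras $TL$ are precisely the graphs sieved in item (2), the four listed families exhaust all possibilities, proving the theorem.
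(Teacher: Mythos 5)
Your proposal follows essentially the same route as the paper's quick proof: the same two duality cases, the same sieving pipeline with the counts of Table \ref{table:4d}, identification of the $1$-regular Case 2 survivor with $\mathcal{P}^{\mathbb{Z}_4}$, solving the structure equations of the Case 1 survivor to obtain $\mathcal{P}^{\mathbb{Z}_2\subset\mathbb{Z}_7\rtimes\mathbb{Z}_2}$, and accounting for the \textbf{FTPC}-sieved graphs via free and tensor products of lower-dimensional exchange relation planar algebras. The only quibble is descriptive: the surviving Case 1 fusion graphs $\Gamma_1,\Gamma_2,\Gamma_3$ are paths (each a tree with two degree-one vertices), not ``$2$-regular forests'' --- a $2$-regular graph would necessarily contain a cycle --- but this slip does not affect the argument.
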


    \section{Outlook}

    In this section, we outline several open questions that pave the way for future research on our classification program for subfactor planar algebras. These questions aim to deepen our understanding of the structure and classification of these planar algebras.

    First, we list some questions about exchange relations. The basic operations in planar algebras include dual operations, free products and tensor products. The second author has proved in \cite{Liu2016Exchange} that the free product of two exchange relation planar algebras is an exchange relation planar algebra, the tensor product of a depth-2 subfactor planar algebra and an exchange relation planar algebra also yields an exchange relation planar algebra, as does the dual of an exchange relation planar algebra.

    \begin{question}
        Is there a complete classification of abelian subfactorizable fusion bialgebras with exchange relations?
    \end{question}    
    This question is inspired by a classical structure theorem of finite abelian groups, which states that every finite abelian group can be expressed as a direct sum of cyclic groups of prime-power order. These cyclic groups serve as the fundamental building blocks of abelian groups. In a parallel manner, we seek to identify the basic components of exchange relation planar algebras, ensuring that these components do not arise from free or tensor products. Currently, we have identified several types of planar algebras, including Temperley-Lieb-Jones planar algebras $ TL(\delta) $, group planar algebras $ \mathcal{P}^{G} $, group-subgroup planar algebras $ \mathcal{P}^{\mathbb{Z}_2\subset \mathbb{Z}_p\rtimes \mathbb{Z}_2} $ where $ p $ is an odd prime. A critical question remains: are these basic components finite? If they are, this could lead us toward a complete classification.

    \begin{question}
        What is the classification of subfactorizable fusion bialgebras with exchange relations?
    \end{question}

    The forest fusion graph reveals a certain rigidity in exchange relation planar algebras. This case includes all finite dimensional group algebras. Hence, we can only expect a complete classification up to finite groups. However, our immediate priority is to develop efficient algorithms that can extend the classification to include more generators and discover new examples. Each new example will provide valuable insights and experience that will enhance our understanding of the underlying structures.

    \begin{question}
        What is the classification of exchange relation planar algebras for $ \dim \mathcal{P}_{2,\pm} = 5 $ when $ \mathcal{P}_{2,\pm} $ are non-abelian?
    \end{question}

    When \( \mathcal{P}_{2,+} \) is abelian, it can be represented as a function algebra over a finite set of points, allowing for the selection of a canonical basis that simplifies the actions of planar operations, such as \( 90^{\circ} \) rotation, \( 180^{\circ} \) rotation, and vertical reflection. In contrast, if \( \mathcal{P}_{2,\pm} \) are non-abelian, they are both isomorphic to \( \mathbb{C} \oplus M_2(\mathbb{C}) \). In this case, the actions of planar operations are not uniquely defined and are instead characterized by continuous parameters. While we have made initial attempts to analyze these structures, we are still far from achieving a complete classification.

    Nevertheless, the 8-dimensional Kac algebra $ K_8 $ serves as an example where both multiplication and convolution are non-abelian.
    Since exchange relation planar algebras generalize finite dimensional Kac algebras, We pose the following question:
    \begin{question}
        What is the smallest dimension of 2-boxes in exchange relation planar algebras such that $ \mathcal{P}_{2,\pm} $ are non-abelian?
    \end{question}

    From a skein theoretical perspective, we naturally consider more comprehensive skein relations, specifically the Yang-Baxter relations.
    \begin{question}
        What are consistency equations of Yang-Baxter relations?
    \end{question}
    A planar algebra satisfies Yang-Baxter relations if and only if the dimension of 3-box space matches the number of non-zero fusion coefficients of a full type fusion bialgebra, see \cite{Liu2016Yang}. Further exploration in \cite{Liu2017generator} revealed that the even part of the Haagerup subfactor planar algebra $ H \mathcal{P} $ satisfies Yang-Baxter relations, however, the precise calculation of the parameters within Yang-Baxter relations remains unresolved. This motivates to derive complete consistency equations of Yang-Baxter relations. If one could solve the consistency equations when the fusion bialgebra is 4-dimensional, one would achieve a classification including $ H \mathcal{P} $. 
    \begin{conjecture}
        The even part of the Haagerup subfactor planar algebra $ H \mathcal{P} $ lies in a one-parameter family of subfactor planar algebras.
    \end{conjecture}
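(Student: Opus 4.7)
The plan is to attack the conjecture by following the analogue of our exchange-relation classification scheme, but upgraded to the Yang-Baxter setting suggested by the preceding question. Specifically, I would first write down the full Yang-Baxter skein relations for $2$-boxes as a parametrized generalization of Definition \ref{Def: Exchange Relation}, where the right-hand side is allowed to contain $T(s,\ell,k)$ terms indexed by the non-zero fusion coefficients of the underlying fusion bialgebra (this is exactly the condition characterizing Yang-Baxter planar algebras in \cite{Liu2016Yang,Liu2017generator}). Take as input the fusion bialgebra structure of $H\mathcal{P}_{2,+}$ of the Haagerup even part, which is $4$-dimensional; its fusion coefficients are known, and $H\mathcal{P}$ is already established to satisfy some Yang-Baxter relation, so the Haagerup point gives a known specialization of the unknown parameters.

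Next, I would derive the complete consistency equations for Yang-Baxter relations in the spirit of Theorem \ref{thm:evathm}, \ref{thm:conthm}, and \ref{thm:polyeq}. The evaluation argument in Section 3 goes through once one replaces Move--3 (three-face reduction via the exchange relation) by a Yang-Baxter move on three-faces together with a larger list of irreducible tangles. The consistency of two distinct evaluation paths then produces a finite system of polynomial equations, this time of higher degree than $3$, whose variables are the parameters in the Yang-Baxter relation (treated as unknown) plus the quantum dimensions $\{d_i\}_{0\le i\le 3}$. Fixing the fusion matrices of the Haagerup even part eliminates the discrete variables, so one is left with a structured polynomial system in the Yang-Baxter parameters over $\mathbb{Q}(\delta)$.

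The key step is then to show that this polynomial system has a positive-dimensional component through the Haagerup point. Concretely, I would compute the Jacobian of the reduced system at the Haagerup specialization and argue it has a non-trivial kernel, corresponding to an infinitesimal deformation direction. Alternatively, one can try to construct the family explicitly: parametrize one of the Yang-Baxter coefficients (for instance a rotation eigenvalue on a distinguished $3$-box), solve the remaining linear or quadratic equations in terms of this parameter, and verify that the fusion coefficients and quantum dimensions can be tuned consistently along the deformation. A Gr\"obner basis computation in a suitable specialization would confirm the existence of the one-parameter curve, and the involvement of the gauge group and of dual/contragredient symmetries would help reduce the computational cost.

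The hard part will be establishing unitarity along the whole family, not just at the Haagerup point. Algebraic existence of a one-parameter family of solutions to the consistency equations produces a one-parameter family of planar algebras over $\mathbb{C}$, but subfactorizability requires the partition function to be positive semi-definite with respect to the $*$-structure on every box space. I would approach this by proving that the Markov trace on $3$-boxes, computed via the Yang-Baxter skein, depends continuously on the parameter, so that positivity is an open condition; combined with positivity at the Haagerup point (already known), this would yield a non-trivial open interval of unitary specializations, which is exactly the assertion of the conjecture. A secondary obstacle is ruling out the possibility that the deformation is merely a gauge transformation: one must exhibit a planar-algebra invariant (for example a suitably normalized trace of a rotation on $3$-boxes) whose value varies along the family.
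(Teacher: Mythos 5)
This statement is a \emph{conjecture} in the paper's Outlook section; the paper offers no proof of it, and in fact explicitly records that the prerequisite step of your plan --- deriving the complete consistency equations for Yang-Baxter relations --- is itself posed as an open question there, with the remark that ``the precise calculation of the parameters within Yang-Baxter relations remains unresolved.'' Your proposal is a research program, not a proof: nearly every step is phrased as something you \emph{would} do, and the two steps that actually constitute the content of the conjecture are left entirely unexecuted. First, the existence of a positive-dimensional solution component through the Haagerup point is asserted via ``compute the Jacobian and argue it has a non-trivial kernel,'' but no computation is performed and no reason is given why the kernel should be non-trivial rather than zero-dimensional (rigidity is the generic expectation for such overdetermined systems, as the paper itself emphasizes for the exchange-relation case, where the structure equations number $O(n^6)$ against far fewer variables). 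Second, even granting an algebraic curve of solutions, your unitarity argument --- ``positivity is an open condition by continuity'' --- is not automatic: positive semi-definiteness of the partition function must hold on \emph{every} box space $\mathcal{P}_{n,\pm}$, an infinite family of conditions, and an intersection of infinitely many open conditions need not be open. Known one-parameter families (e.g.\ Temperley--Lieb at $\delta<2$) show that unitarity can fail off a discrete set, so the Haagerup point could a priori be isolated among unitary specializations.

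A further gap is your treatment of the gauge/isomorphism ambiguity: you acknowledge that the deformation might be a gauge transformation and propose to ``exhibit a planar-algebra invariant whose value varies along the family,'' but you neither identify such an invariant nor verify that it varies. Without that, even a genuine curve of solutions to the consistency equations would not establish a non-trivial one-parameter family of subfactor planar algebras. In short, the proposal correctly identifies the paper's intended route (extend the evaluation/consistency machinery of Theorems \ref{thm:evathm}, \ref{thm:conthm}, \ref{thm:polyeq} from exchange relations to Yang-Baxter relations and then deform from the Haagerup point), but it does not close any of the gaps that make the statement a conjecture rather than a theorem.
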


    In summary, these open questions and conjectures highlight the potential for significant advancements in our understanding of subfactor planar algebras. We encourage further exploration in these areas, as they promise to yield valuable insights into the intricate structures and classifications of these mathematical objects.

    \bibliography{cerpa}

\end{document}